\title{Ergodicity in Randomly Forced Rayleigh-B\'{e}nard Convection}
\author{J. F\"{o}ldes, N. Glatt-Holtz, G. Richards, J.P. Whitehead\\
  \scriptsize{emails:  juraj.foldes@ulb.ac.be, negh@vt.edu, g.richards@rochester.edu, whitehead@mathematics.byu.edu}}
\definecolor{Red}{rgb}{0.7,0,0.1}
\definecolor{Green}{rgb}{0,0.7,0}
\definecolor{labelkey}{rgb}{0,0,1}
\numberwithin{equation}{section}
\newtheorem{Thm}{Theorem}[section]
\newtheorem{Prop}[Thm]{Proposition}
\newtheorem{Cor}[Thm]{Corollary}
\newtheorem{Rmk}{Remark}[section]
\newcommand{\pd}[1]{\partial_{#1}}
\newcommand{\E}{\mathbb{E}}
\newcommand{\Prb}{\mathbb{P}}
\newcommand{\RR}{\mathbb{R}}
\newcommand{\bfU}{\mathbf{u}}
\newcommand{\bfV}{\mathbf{v}}
\newcommand{\T}{T}
\newcommand{\Tdiff}{\phi}
\newcommand{\prN}{Pr}
\newcommand{\Nu}{Nu}
\newcommand{\hate}{\hat{\mathbf{e}}_{d}}
\newcommand{\DD}{\mathcal{D}}
\newcommand{\Trs}{\theta}
\newcommand{\Utot}{U}
\newcommand{\Vtot}{V}
\newcommand{\NN}{\mathbb{N}}
\newcommand{\Uo}{\mathbf{u}^{0}}
\newcommand{\tstar}{t^*}
\newcommand{\vstar}{{\bf{{v}}}^*}
\newcommand{\Tstar}{{T^*}}
\newcommand{\pstar}{p^*}
\newcommand{\sigmastar}{\sigma^*}
\newcommand{\sigmatildestar}{\tilde{\sigma}^*}
\newcommand{\Wstar}{W_*}
\newcommand{\tildeWstar}{{\tilde{W}}_*}
\newcommand{\astar}{\alpha}
\newcommand{\rab}{\tilde{Ra}}
\newcommand{\ra}{Ra}
\newcommand{\Tho}{\theta^{0}}
\begin{document}
\markboth{J. F\"{o}ldes, N. Glatt-Holtz, G. Richards, J.P. Whitehead}
{Ergodicity in Randomly Forced Rayleigh-B\'{e}nard Convection}

\maketitle

\begin{abstract}
We consider the Boussinesq approximation for Rayleigh-B\'{e}nard convection perturbed by an additive noise
and with boundary conditions corresponding to heating from below.
In two space dimensions, with sufficient stochastic forcing in the temperature component and large
Prandtl number $\prN> 0$, we establish the existence of a unique ergodic invariant measure.
In three space dimensions, we prove the existence of a statistically invariant state, and establish unique ergodicity
for the infinite Prandtl Boussinesq system.  Throughout this work we provide streamlined proofs of unique ergodicity which invoke
an asymptotic coupling argument, a delicate usage of the maximum principle, and exponential martingale inequalities.
Lastly, we show that the background method of Constantin-Doering
\cite{ConstantinDoering1996} can be applied in our stochastic setting, and prove
bounds on the Nusselt number relative to the unique invariant measure.
\end{abstract}

\setcounter{tocdepth}{1}
\tableofcontents

\section{Introduction}

Following experiments of B\'{e}nard \cite{Benard1900}, Rayleigh \cite{Rayleigh1916} proposed the equations of Boussinesq
\cite{Boussinesq1897} as an effective model for the flow of a fluid driven by buoyancy forces due to heating from below
and cooling from above, a phenomenon now referred to as Rayleigh-B\'{e}nard convection.
These equations have since appeared in a wide variety of physical models,
including descriptions of climate and weather processes and the internal dynamics of both planets and stars.

Individual solutions of the Boussinesq system can be unpredictable and seemingly chaotic,
particularly in parameter ranges leading to turbulent regimes.  However, some of the statistical properties of solutions are robust.
It is therefore of fundamental significance to identify and predict statistical features of Rayleigh-B\'{e}nard convection, and to
connect these features to rigorous theory at the level of the Boussinesq equations.  Indeed, the fine scale structure of flows,
complex pattern formation, and the mean heat transport, for example, remain topics of intensive theoretical, numerical and experimental research.
Here the role of analysis is particularly significant in parameter ranges beyond the capacity of direct
numerical simulation or empirical observability.  See \cite{BodenschatzPeschAhlers2000, Manneville2006, AhlersGrossmannLohse2009, LohseXia2010} for
a survey of recent developments in the physics literature.

A mathematically rigorous theory of statistical properties should include the analysis of invariant measures for the system,
which contain important statistics of the flow.  Natural questions include the existence, uniqueness, ergodicity and other
attraction properties of invariant measures.  Moreover one may seek
to prove quantitative bounds on statistical quantities determined by flows in terms of these measures.

There is a significant literature devoted to proving rigorous quantitative bounds for the Boussinesq equations,
primarily focused on estimating rates of convective heat transport.
 This direction of research was initiated by \cite{malkus1954,howard1972,busse1970,busse1978}, advanced significantly
with the invention of the ``background flow method'' \cite{ConstantinDoering1996,ConstantinDoerin1999,ConstantinDoering2001} (see also \cite{hopf1940}),
and refined in more recent works (e.g. \cite{WhiteheadDoering2011,OttoSeis2011}).
It is noteworthy that practical methodologies for proving rigorous bounds on key statistical quantities have not been identified
in a stochastic setting.

On the other hand, while some works have established existence and convergence properties of invariant measures for
the Boussinesq system \cite{Wang2008b}, it is difficult, in general, to obtain uniqueness or erdogicity results for systems of
deterministic partial differential equations (cf. \cite{FoiasManleyRosaTemam01}).  These problems become more tractable by including a stochastic forcing,
due to smoothing properties of the corresponding probability distribution functions induced by random perturbations in the equations.
Moreover, as early as the 19th century, Boussinesq conjectured that turbulent flow cannot be described solely with deterministic methods,
and indicated that a stochastic framework should be used \cite{Stanisic1985}.  This setting is now ubiquitous in the
turbulence literature, see e.g. \cite{Novikov1965, VishikKomechFusikov1979, Eyink96} and containing references.
In particular, note that some works have considered stochastic initial and boundary conditions for the Boussinesq system to predict qualitative features of the flow,
including the onset of turbulence \cite{VenturiWanKarniadakis10,VenturiChoiKarniadakis12}.

In the manuscript we consider a stochastic Boussinesq system
\begin{align}
  \frac{1}{\prN}&(d \bfU + \bfU \cdot \nabla \bfU dt) + \nabla p dt = \Delta \bfU dt + \ra \hate \T dt + \sum_{k = 1}^{N_1} \tilde{\sigma}_k d\tilde{W}^k, \quad \nabla \cdot \bfU = 0,
  \label{eq:B:eqn:vel}\\
  &d \T + \bfU \cdot \nabla \T dt = \Delta \T dt +  \sum_{k = 1}^{N_2}\sigma_k dW^k ,
  \label{eq:B:eqn:temp}
\end{align}
for the (non-dimensionalized) velocity field $\bfU = (u_1,\ldots,u_d)$ (where $d=2$ or $d=3$),
pressure $p$, and temperature $T$
of a buoyancy driven fluid.
The system \eqref{eq:B:eqn:vel}--\eqref{eq:B:eqn:temp}
evolves in a domain $\mathbf{x}=(x_1,\ldots,x_d)\in\DD = [0,L]^{d-1} \times [0,1]$ and is supplemented with boundary conditions to be specified below.
Here $\hate=(0,\ldots,1)$ is a unit vector pointing in the vertical direction.

The driving noise is given by a collection of independent white noise processes $d\tilde{W}^k = d\tilde{W}^k(t)$ and $dW^k = dW^k(t)$ acting in spatial
directions $\tilde{\sigma}_k = \tilde{\sigma}_k(x)$, $\sigma_k = \sigma_k(x)$ which
form a complete orthogonal basis of eigenfunctions (ordered with respect to eigenvalues) of the Stokes and Laplace operators on $\DD$, respectively,
with appropriate boundary conditions.
\footnote{Our analysis does not require stochastic perturbation in a
diagonal basis of eigenfunctions (we assume this form for simplicity of presentation).  More generally, we could consider any
$\{\tilde{\sigma}_k\}$ and $\{\sigma_k\}$ which span a determining set of directions for the governing equations.}
  The number of forced modes, $N_1$ and $N_2$, are both finite and will be further specified in theorem statements below.
We can treat the case where the numbers $N_1$ or $N_2$ are infinite,
provided we impose enough decay in the bases $\{\tilde{\sigma}_k\}$ and $\{\sigma_k\}$ such that the system \eqref{eq:B:eqn:vel}--\eqref{eq:B:eqn:temp}
remains globally well-posed according to Propositions \ref{prop:existence-uniqueness:2d}--\ref{prop:existence-uniqueness:3d}
(see Remark \ref{rem:inf} below).
This does not complicate our analysis in a significant way, and for simplicity of
presentation, we assume $N_1,N_2<\infty$ unless stated otherwise.

We will consider, in particular, the case $N_1=0$; that is, \eqref{eq:B:eqn:vel}--\eqref{eq:B:eqn:temp} with no stochastic forcing in the
velocity component.  This is partly motivated by investigations of the (determistic) Boussinesq system which have considered convection driven by internal heating
(see \cite{Roberts1967,TrittonZarraga1967,LuDoeringBusse2004, WhiteheadDoering2011,GoluskinSpiegel2012, BarlettaNield2012}), to describe,
for example, radioactive decay processes in the earth's mantle.

In previous work of the first three authors in collaboration with Thomann \cite{FoldesGlattHoltzRichardsThomann2013}, we established ergodic and mixing properties
for \eqref{eq:B:eqn:vel}--\eqref{eq:B:eqn:temp} in the two-dimensional periodic domain (i.e. $\DD = \mathbb{T}^2$) with a stochastic forcing acting on
a small collection of low frequency modes in the temperature component only; that is, with $N_1=0$ and small $N_2>0$.  This form of spatially degenerate
random forcing (i.e. $N_2$ small) is motivated by the turbulence literature, where it is conjectured that nonlinear terms will propagate excitation
to higher frequencies, and the system will converge to a unique statistical equilibrium (see \cite{Novikov1965, VishikKomechFusikov1979, Eyink96}).
The results of \cite{FoldesGlattHoltzRichardsThomann2013} generalized recent progress of \cite{HairerMattingly06,HairerMattingly2008,HairerMattingly2011}
on the stochastic Navier-Stokes equations and related systems.  Indeed, note that with $N_1=0$ we are not forcing the velocity field in
\eqref{eq:B:eqn:vel}--\eqref{eq:B:eqn:temp} directly, which is a more degenerate setting than was considered in
\cite{HairerMattingly06,HairerMattingly2008,HairerMattingly2011}.

From the physical point of view, using periodic boundary conditions in the vertical direction for \eqref{eq:B:eqn:vel}--\eqref{eq:B:eqn:temp} is not
 appropriate.  Instead, one should fix the temperature on the upper and lower boundaries (corresponding to heating from below), and employ Dirichlet conditions
 in the velocity field, as follows
\begin{align}
   \bfU_{|x_d = 0} = \bfU_{|x_d = 1} = 0, \quad T_{|x_d = 0} = \rab, \;  T_{|x_d = 1} = 0, \quad
   \bfU, T \textrm{ are periodic in } \mathbf{x} = (x_1,\ldots, x_{d-1}).
   \label{eq:bc}
\end{align}
The positive unitless physical parameters in the problem \eqref{eq:B:eqn:vel}--\eqref{eq:B:eqn:temp} with boundary conditions
\eqref{eq:bc} are the \emph{Prandtl} number $\prN$ and \emph{Rayleigh} numbers $\ra$ and $\rab$; see Section~\ref{sec:non-dim:equations}
below for further details.  Our first objective is to establish existence and uniqueness properties of invariant measures for
\eqref{eq:B:eqn:vel}--\eqref{eq:B:eqn:temp} with boundary conditions given by \eqref{eq:bc}.

\begin{Thm}\label{thm:1}
Suppose the spatial dimension is $d=2$. Then the system \eqref{eq:B:eqn:vel}--\eqref{eq:B:eqn:temp} with boundary conditions \eqref{eq:bc}
possesses a unique ergodic invariant probability measure if at least one of the following holds:
\begin{enumerate}[(i)]
\item  $N_{1}=N_1(\prN,\ra,\rab)>0$ and $N_{2}=N_{2}(\prN,\ra,\rab)>0$ are both sufficiently large.
\item
 $\prN>0$ and $N_{2}=N_2(\prN,\ra,\rab)>0$ are both sufficiently large.
\end{enumerate}
\end{Thm}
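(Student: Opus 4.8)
The plan is to combine a Krylov--Bogolyubov argument for existence with an asymptotic coupling argument for uniqueness and ergodicity, taking as given the well-posedness and Feller property supplied by Proposition~\ref{prop:existence-uniqueness:2d}, which yields a Markov transition semigroup $P_t$ associated to \eqref{eq:B:eqn:vel}--\eqref{eq:B:eqn:temp}. The first step is to record moment bounds uniform in time. For the temperature I would use the maximum principle: subtracting off the stochastic convolution $z$ solving the linear equation $dz = \Delta z\,dt + \sum_{k}\sigma_k dW^k$ with the appropriate boundary conditions, the remainder $\T - z$ solves a noiseless random advection--diffusion equation carrying the boundary data \eqref{eq:bc}, to which the maximum principle applies, giving $L^\infty$ bounds on $\T-z$; moment estimates on the Ornstein--Uhlenbeck-type contribution $z$ then yield bounds on $\E\|\T\|^p$ in $L^\infty$ and $H^1$. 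These feed energy estimates for $\bfU$ (using $\nabla\cdot\bfU=0$ to remove the pressure and the antisymmetry of the transport term) to produce uniform-in-time bounds in a space compactly embedded in the phase space. Tightness of the time-averaged laws $\frac{1}{T}\int_0^T P_t^*\delta_{(\bfU_0,\T_0)}\,dt$ then produces an invariant measure by Krylov--Bogolyubov.

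For uniqueness I would invoke the abstract asymptotic coupling criterion: it suffices to build, for solutions started from two arbitrary initial conditions $(\bfU_0,\T_0)$ and $(\bfV_0,S_0)$, a coupling $((\bfU,\T),(\bfV,S))$ whose second component is equivalent in law to a solution of the uncontrolled system and along which $\|\bfU-\bfV\| + \|\T-S\| \to 0$ with positive probability. The coupling is generated by a feedback control supported on the forced temperature directions $\{\sigma_k\}_{k=1}^{N_2}$: I add to the $S$-equation a term that nudges the projection $P_{N_2}(S-\T)$ of the temperature difference to zero. Writing $\rho = \T - S$ and $\mathbf{w} = \bfU - \bfV$, the high temperature modes $(I-P_{N_2})\rho$ then obey a damped advection--diffusion equation in which the spectral gap $\|\nabla(I-P_{N_2})\rho\|^2 \gtrsim N_2\,\|(I-P_{N_2})\rho\|^2$ dominates the quadratic coupling once $N_2$ is large, forcing $\rho\to 0$ exponentially \emph{provided} $\mathbf{w}$ is controlled. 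The crucial point is that $\mathbf{w}$ carries no noise — the velocity forcing cancels in the difference — so $\mathbf{w}$ is slaved to $\rho$ through the buoyancy source $\ra\hate\rho$: an energy estimate on the $\mathbf{w}$-equation bounds $\|\mathbf{w}\|$ by $\|\rho\|$, with a constant that is controlled when $\prN$ is large, since the inertial term carries the small factor $1/\prN$ and the velocity relaxes essentially instantaneously to the temperature. This is precisely the mechanism behind case (ii). In case (i) the additional velocity forcing lets me also nudge the low velocity modes directly, so the synchronization closes for every $\prN>0$.

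To complete the coupling I must verify the Girsanov/finite-cost condition: the control lives in the finite-dimensional range of the noise covariance and satisfies $\int_0^\infty \|G\|^2\,dt < \infty$ on a set of positive probability. The nonlinear part of the control is estimated using the $L^\infty$ temperature bounds from the maximum principle, and the resulting Radon--Nikodym derivative is controlled by an exponential martingale inequality, which converts the pathwise energy bound into the required equivalence of measures. Combined with the exponential decay of $\rho$ and $\mathbf{w}$, this verifies the hypotheses of the asymptotic coupling criterion and yields a unique, ergodic invariant measure.

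The main obstacle is the velocity synchronization in case (ii): because $N_1=0$ there is no direct control on $\bfU$, so the entire velocity field must be driven to coincidence solely through the temperature coupling. Making this quantitative — closing the Foias--Prodi-type energy estimate for $\mathbf{w}$ against the quadratic nonlinearities uniformly in time, and identifying the precise largeness of $\prN$ and $N_2$ for which the combined $(\rho,\mathbf{w})$ system contracts — is the delicate step, and it is here that the interplay of the maximum principle (for the $L^\infty$ bounds that tame the nonlinear terms) and the exponential martingale estimates (for the control cost) is essential.
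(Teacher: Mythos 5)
Your overall architecture coincides with the paper's: existence by Krylov--Bogoliubov from a priori bounds, uniqueness by verifying the Hairer--Mattingly--Scheutzow asymptotic coupling criterion for a Girsanov-shifted copy of the system nudged on the forced modes, with the Foias--Prodi contraction closed on a positive-probability event supplied by the a priori estimates; your split between cases (i) and (ii) (extra velocity nudging versus large $\prN$) is also the paper's. There is, however, one step that does not work as you describe it, and it sits at the foundation of everything else. You propose to subtract the stochastic convolution $z$ solving $dz=\Delta z\,dt+\sum_k\sigma_k\,dW^k$ (without advection) and apply the maximum principle to $\T-z$. But $\T-z$ satisfies
\begin{equation*}
\partial_t(\T-z)+\bfU\cdot\nabla(\T-z)=\Delta(\T-z)-\bfU\cdot\nabla z,
\end{equation*}
and the source $-\bfU\cdot\nabla z$ destroys both claims you need: the maximum principle no longer yields a bound uniform in time (it gives growth like $\int_0^t\|\bfU\cdot\nabla z\|_{L^\infty}\,ds$), and, more importantly, the resulting temperature bound is not independent of $\bfU$. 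That independence is essential. A direct It\^o estimate on $\|\theta\|^2$ produces the cross term $\rab\langle\theta,u_d\rangle$, the velocity estimate produces $\prN\ra\langle\theta,u_d\rangle$, and the resulting coupled system of differential inequalities (damping rates $\prN$ and $1$, coupling coefficients $\prN\ra^2$ and $\rab^2$) closes only when $\ra\rab<1$ --- useless in the interesting regime. One input must therefore be a temperature bound containing no velocity at all. The paper obtains it by comparing $T$ not with the free stochastic convolution but with the solution $S$ of the \emph{advected} stochastic heat equation $dS+\bfU\cdot\nabla S\,dt=\Delta S\,dt+\sum_k\sigma_k\,dW^k$ with homogeneous Dirichlet data and initial datum $T_0$: then $S-(T-\rab)$ and $S-T$ solve source-free, noise-free advection--diffusion equations with signed initial and boundary data, the maximum principle gives a pointwise sign, hence $|T|\le|S|+\rab$, and decoupled $L^p$ bounds on $S$ follow from the $L^p$ It\^o formula, a Poincar\'e inequality for $|S|^{p/2}$, and the exponential martingale estimate (Proposition \ref{prop:dde}). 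Replace your decomposition with this one; the rest of your plan then goes through.

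Two smaller points. First, no $L^\infty$ temperature bound is needed in $d=2$: the Gr\"onwall exponent in the Foias--Prodi step only requires $\int_0^t(\|\nabla\bfU\|^2+\|\nabla\theta\|^2)\,ds\le C_1+C_2t+C_3K$ on an event of positive probability, which is exactly what the exponential martingale estimates deliver, so you are asking the maximum principle for more than the proof uses. Second, ``finite Girsanov cost on a set of positive probability'' does not by itself give absolute continuity of the shifted law with respect to the original one; you should cut the feedback control off at the stopping time $\tau_R$ at which its accumulated cost reaches $R$, so that Novikov's condition holds unconditionally, and then use the synchronization estimate to show that on the good event $\tau_R=\infty$ for $R$ large, so the cutoff is never active there.
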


For the proof of Theorem \ref{thm:1}, we use a streamlined argument which may be of broader interest in the theory of ergodicity for
infinite-dimensional systems.  A similar approach is used in concurrent work of two of the authors in collaboration with Mattingly
\cite{GlattHoltzMattinglyRichards2015}, where this technique has been implemented for a number of other nonlinear stochastic PDEs.
The argument invokes an abstract framework developed for application to SDEs with delay \cite{HairerMattinglyScheutzow2011},
allowing us to significantly reduce the length and technical detail of the proofs.

More precisely, by applying a theorem of \cite{HairerMattinglyScheutzow2011}
(see Theorem \ref{thm:HMS2011}), we can reduce the problem of uniqueness to the convergence of solutions of \eqref{eq:B:eqn:vel}--\eqref{eq:B:eqn:temp}
to solutions of a shifted system (see \eqref{eq:B:eqn:vel:sft}--\eqref{eq:B:eqn:temp:sft} below).  In order to apply the result of
\cite{HairerMattinglyScheutzow2011}, we invoke the Girsanov theorem to establish the equivalence of (the laws of) solutions to the shifted system to
those of \eqref{eq:B:eqn:vel}--\eqref{eq:B:eqn:temp}, and prove the desired convergence at time infinity by using Foias-Prodi type bounds, a stopping
time argument, and a priori estimates on solutions.  While the basic ingredients of this method are standard tools in the field
(e.g. see \cite{KuksinShirikian12}), we believe that its brevity and simplicity makes it useful.
We emphasize that in the context of the Boussinesq system,
the a priori estimates on solutions will be proven with a nontrivial comparison argument
invoking the maximum principle and certain exponential martingale inequalities (see Section \ref{sec:3} for more details).

In three space dimensions, we prove the existence of statistically invariant states by means of a regularization along with suitable a priori estimates
following the general strategy from \cite{FlandoliGatarek1} (see also \cite{GlattHoltzSverakVicol2013}).
Here our approach is to consider a Galerkin truncation imposed only in the velocity equations \eqref{eq:B:eqn:vel}.
As such we are able to preserve the advection diffusion structure of \eqref{eq:B:eqn:temp}, which plays a critical role in the analysis.

\begin{Thm}\label{thm:1b}
Suppose the spatial dimension is $d=3$, then the system \eqref{eq:B:eqn:vel}--\eqref{eq:B:eqn:temp} with boundary conditions \eqref{eq:bc} possesses at least one statistically invariant state.
\end{Thm}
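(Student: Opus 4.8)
The plan is to establish Theorem~\ref{thm:1b} by a Krylov--Bogoliubov argument built on a \emph{velocity-only} Galerkin truncation, following the general strategy of \cite{FlandoliGatarek1} (cf. \cite{GlattHoltzSverakVicol2013}). First I would recast \eqref{eq:B:eqn:vel}--\eqref{eq:B:eqn:temp} on a fixed Hilbert phase space by removing the inhomogeneous temperature boundary data: writing $\T = \tau + \Trs$, where $\tau(\mathbf{x}) = \rab(1-x_d)$ is the steady affine conduction profile satisfying \eqref{eq:bc}, the perturbation $\Trs$ obeys homogeneous Dirichlet conditions. The pair $(\bfU,\Trs)$ then evolves on a space such as $H$ consisting of divergence-free $L^2$ velocities times $L^2$ temperatures, with the buoyancy forcing now reading $\ra\hate(\tau+\Trs)$ and the temperature equation acquiring the additional linear advection $\bfU\cdot\nabla\tau$.

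Next I would introduce the truncation. Let $P_n$ denote the projection onto the span of the first $n$ Stokes eigenfunctions and replace the velocity nonlinearity and buoyancy in \eqref{eq:B:eqn:vel} by their $P_n$-projections, while leaving the temperature equation \eqref{eq:B:eqn:temp} (for $\Trs$) \emph{un-truncated} but advected by the smooth field $\bfU_n := P_n\bfU$. For each fixed $n$ this is a finite-dimensional velocity SDE coupled to a genuine advection--diffusion PDE for $\Trs$; global well-posedness and the Feller property follow as in Proposition~\ref{prop:existence-uniqueness:3d}. The decisive payoff of truncating only the velocity is that, since $\bfU_n$ is smooth, the temperature equation retains its advection--diffusion form, so a comparison/maximum-principle argument — subtracting the solution of the associated stochastic heat equation and controlling the remainder via exponential martingale inequalities — yields $L^\infty$ (hence $L^2$) bounds on $\Trs$ that are \emph{uniform in} $n$.

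I would then produce an invariant measure $\mu_n$ for each truncated system by Krylov--Bogoliubov. An It\^o/energy estimate for $\tfrac1{\prN}\|\bfU_n\|_{L^2}^2$ — in which the buoyancy contribution $\ra\langle \hate(\tau+\Trs),\, \bfU_n\rangle$ is absorbed using the temperature bound above together with Poincar\'e and Young inequalities, and the additive noise contributes a bounded trace term — furnishes a Lyapunov structure and a uniform-in-time bound on the time-averaged occupation measures. Their tightness, via the compact embedding $H^1\hookrightarrow L^2$ (the dissipation controls $\|\bfU_n\|_{H^1}^2$ and $\|\Trs\|_{H^1}^2$ in the time average), gives $\mu_n$. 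Because these estimates are uniform in $n$, $\int(\|\bfU\|_{H^1}^2 + \|\Trs\|_{H^1}^2)\,d\mu_n$ is bounded independently of $n$, so by Prokhorov $\{\mu_n\}$ is tight on $H$ and admits a weakly convergent subsequence $\mu_{n_k}\rightharpoonup\mu$.

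The main obstacle is verifying that this limit $\mu$ is genuinely a statistically invariant state, since in $d=3$ we lack uniqueness and must argue at the level of stationary martingale solutions. Here I would realize each $\mu_n$ as the time-marginal of a stationary process $(\bfU_n,\Trs_n)$, prove tightness of the laws of these processes on a path space like $L^2_{loc}([0,\infty);H)$ refined by weak-in-time/weak-topology continuity (combining the uniform energy bounds with fractional-in-time estimates to invoke an Aubin--Lions/Simon criterion), and apply the Skorokhod representation theorem to obtain a.s.-convergent versions on a new probability space. The delicate point is passing to the limit in the nonlinear terms $\bfU\cdot\nabla\bfU$ and $\bfU_n\cdot\nabla\Trs$ and in the stochastic integral: the uniform $L^2_tH^1_x$ bound together with a.s. convergence in $L^2_tL^2_x$ supplies the strong convergence needed to identify these limits. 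Once this is done, the limiting process is a stationary martingale solution of \eqref{eq:B:eqn:vel}--\eqref{eq:B:eqn:temp}, and its time-marginal $\mu=\mathrm{Law}(\bfU(t),\Trs(t))$ is the desired statistically invariant state.
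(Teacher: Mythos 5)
Your proposal is correct and follows essentially the same route as the paper: a velocity-only Galerkin truncation that deliberately preserves the advection--diffusion structure of the temperature equation (so the maximum-principle/exponential-martingale bounds of Proposition~\ref{prop:dde} apply uniformly in the truncation parameter), Krylov--Bogoliubov at each level, and then a passage to the limit along stationary solutions via uniform energy and fractional-in-time estimates, Aubin--Lions type compactness, and the Skorokhod embedding. The only cosmetic difference is that the paper also projects the additive velocity noise by $P_N$ and records the $N$-independent exponential moment bound \eqref{eq:mod:gal:exp:bnd:2} explicitly before taking limits, but these do not change the argument.
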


The analysis of convection in the large Prandtl number limit is relevant in numerous
contexts, such as modeling of the earth's mantle and for convection in high pressure gasses,
where $\prN$ can reach the order of $10^{24}$ (see \cite{ConstantinDoerin1999, ConstantinDoering2001, OttoSeis2011}).
Taking $N_1=0$, and substituting $\prN = \infty$ into \eqref{eq:B:eqn:vel}--\eqref{eq:B:eqn:temp},
we formally obtain the stochastic infinite Prandtl Boussinesq system
\begin{align}
   &- \Delta \bfU =  \nabla p  + \ra \hate \T, \quad \nabla \cdot \bfU = 0,
  \label{eq:B:vel:inf}\\
  &d \T + \bfU \cdot \nabla \T dt = \Delta \T dt +  \sum_{k=1}^{N_2} \sigma_k dW^k,
  \label{eq:B:temp:inf}
\end{align}
complemented with boundary conditions for $\T$ and $\bfU$ as in \eqref{eq:bc}.  Note that \eqref{eq:B:vel:inf}--\eqref{eq:B:temp:inf} is an active scalar equation for the temperature $\T$, and the velocity field $\bfU$ is enslaved to $\T$.
We remark that the system \eqref{eq:B:vel:inf}--\eqref{eq:B:temp:inf} can have complex dynamics, even without
stochastic forcing, provided the Rayleigh number $\ra$ is sufficiently large; see \cite{BreuerHansen09,ConstantinDoerin1999,
BodenschatzPeschAhlers2000, ConstantinDoering2001, Wang2004a, park2006,
AhlersGrossmannLohse2009, LohseXia2010, OttoSeis2011}.

In a companion work of the first three authors \cite{FoldesGlattHoltzRichardsThomann2014b}, we have recently established uniqueness and mixing properties of the invariant probability measure
for the system \eqref{eq:B:vel:inf}--\eqref{eq:B:temp:inf}.  In this manuscript we present a more direct proof of uniqueness in order
to highlight another application of the simplified method from \cite{HairerMattinglyScheutzow2011,GlattHoltzMattinglyRichards2015}.
\begin{Thm}
Suppose the spatial dimension is $d=2$ or $d=3$.
If $N_2=N_2(\ra,\rab)>0$ is sufficiently large, then the system \eqref{eq:B:vel:inf}--\eqref{eq:B:temp:inf} possesses a unique ergodic invariant probability
measure.
\label{thm:2}
\end{Thm}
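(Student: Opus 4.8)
The plan is to treat the infinite-Prandtl system \eqref{eq:B:vel:inf}--\eqref{eq:B:temp:inf} as a Markov evolution for the temperature alone, exploiting that the velocity is enslaved: applying the Leray projection $\Pi$ to \eqref{eq:B:vel:inf} gives $\bfU = \ra(-\Delta)^{-1}\Pi(\hate \T) =: \ra\mathcal{L}\T$, a bounded linear map that gains two derivatives, so $\|\bfU\|_{H^{s+2}}\lesssim\|\T\|_{H^{s}}$. After subtracting the conduction profile $\rab(1-x_d)$ to homogenize the boundary data, \eqref{eq:B:temp:inf} becomes a single stochastic advection-diffusion equation whose well-posedness and Feller property are supplied by the earlier propositions. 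First I would establish \emph{existence} of an invariant measure by the Krylov--Bogoliubov method: testing the temperature equation against $\T$, the advection term $\langle\bfU\cdot\nabla\T,\T\rangle$ vanishes by incompressibility together with the boundary conditions \eqref{eq:bc}, while the Laplacian supplies a spectral gap; with the bounded additive forcing this yields uniform-in-time moment bounds in a space compactly embedded in the temperature phase space, hence tightness.

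For \emph{uniqueness and ergodicity} I would invoke the asymptotic-coupling criterion of Theorem \ref{thm:HMS2011}. Given two initial data, I would run the genuine solution $\T^{(1)}$ and a \emph{shifted} solution $\T^{(2)}$ obtained by adding a control $v=v(t)$, supported on the forced directions $\{\sigma_k\}_{k\le N_2}$, and chosen as a Foias--Prodi feedback that drives the low-mode difference $P_{N_2}(\T^{(1)}-\T^{(2)})$ to zero. Writing $\phi=\T^{(1)}-\T^{(2)}$ and $\bfV=\bfU^{(1)}-\bfU^{(2)}=\ra\mathcal{L}\phi$, the noise cancels and $\phi$ solves, pathwise,
\[
   \pd{t}\phi + \bfU^{(1)}\cdot\nabla\phi + \bfV\cdot\nabla\T^{(2)} = \Delta\phi - v .
\]
Testing against $\phi$, the term $\langle\bfU^{(1)}\cdot\nabla\phi,\phi\rangle$ vanishes, and integrating the coupling term by parts (the boundary contributions vanish since $\bfV=0$ on $x_d=0,1$) gives $|\langle\bfV\cdot\nabla\T^{(2)},\phi\rangle|=|\langle\T^{(2)},\bfV\cdot\nabla\phi\rangle|\le\|\T^{(2)}\|_{L^\infty}\|\bfV\|_{L^2}\|\nabla\phi\|_{L^2}$. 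Using the enslaving bound $\|\bfV\|_{L^2}\lesssim\|\phi\|_{L^2}$, Young's inequality, and the spectral gap $\lambda_{N_2}$ available once the feedback has eliminated the first $N_2$ modes, one obtains
\[
   \tfrac{d}{dt}\|\phi\|_{L^2}^2 \le \bigl(C\|\T^{(2)}\|_{L^\infty}^2-\lambda_{N_2}\bigr)\|\phi\|_{L^2}^2 .
\]

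To make the decay genuine I must dominate the \emph{random} quantity $\|\T^{(2)}\|_{L^\infty}$ by the deterministic gap $\lambda_{N_2}$. Here I would use the maximum principle for the advection-diffusion equation, which bounds $\|\T^{(2)}(t)\|_{L^\infty}$ by the boundary datum $\rab$ plus a purely stochastic contribution driven by the additive Gaussian forcing; exponential martingale inequalities then control the probability that this contribution is large. Introducing a stopping time $\tau_R$ at which the supremum norm first exceeds a threshold $R$, I would take $N_2$ so large that $\lambda_{N_2}>CR^2$, yielding exponential contraction $\|\phi(t)\|_{L^2}\le e^{-\gamma t}\|\phi(0)\|_{L^2}$ on $\{t\le\tau_R\}$, and use the martingale estimates to show $\tau_R=\infty$ with positive probability uniformly in the realization. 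The exponential decay of $\phi$ in turn forces the feedback $v$ to decay, so $\int_0^\infty\|v\|^2\,dt<\infty$ almost surely; a localized Novikov/Girsanov argument then certifies that the law of the shifted trajectory is equivalent to that of the genuine solution. Almost-sure convergence $\T^{(2)}(t)\to\T^{(1)}(t)$ together with equivalence of laws are precisely the hypotheses of Theorem \ref{thm:HMS2011}, giving a unique ergodic invariant measure.

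The main obstacle is closing the Foias--Prodi estimate against the self-interaction $\bfV\cdot\nabla\T^{(2)}$ when the only control on $\T^{(2)}$ comes from the maximum principle and is therefore path-dependent and unbounded in time. The delicate point is the interplay between the maximum principle, which produces the $\|\T^{(2)}\|_{L^\infty}$ bound with an explicit stochastic remainder, and the exponential martingale inequalities, which must simultaneously guarantee the spectral-gap domination on a large-probability set and the finite control energy needed for Girsanov. The two-derivative gain in the enslaving map $\mathcal{L}$ is what keeps this estimate from requiring any further regularity of $\T^{(2)}$, and is exactly what allows the same argument to run in both $d=2$ and $d=3$, where such regularity would otherwise be unavailable.
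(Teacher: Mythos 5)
Your overall architecture is the paper's: existence by Krylov--Bogoliubov, uniqueness by the asymptotic coupling criterion of Theorem \ref{thm:HMS2011} with a Foias--Prodi feedback on the first $N_2$ temperature modes, a stopping time to enforce the Novikov condition for Girsanov, and a priori bounds coming from the maximum-principle comparison and exponential martingale inequalities. The enslaving bound $\|\nabla \bfV\| \leq \ra \|\Tdiff\|$ is also exactly how the paper exploits \eqref{eq:B:vel:inf}. However, the step where you close the Foias--Prodi estimate has a genuine gap. You reduce the contraction to the inequality $\frac{d}{dt}\|\phi\|^2 \leq (C\|\T^{(2)}\|_{L^\infty}^2 - \lambda_{N_2})\|\phi\|^2$ and then need the event $\{\sup_{t \geq 0}\|\T^{(2)}(t)\|_{L^\infty} \leq R\}$ to have positive probability so that $\lambda_{N_2} > CR^2$ wins for all time. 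That event has probability \emph{zero}: the additive Gaussian forcing is unbounded, and over disjoint unit time intervals the noise increments are independent, so any fixed norm of the solution exceeds every threshold infinitely often almost surely. A sup-in-time bound on the solution is simply not available on a set of positive probability, so your stopping time $\tau_R$ is almost surely finite and the contraction never becomes global. A secondary issue is that the maximum-principle comparison in the paper (Proposition \ref{prop:dde}) only reduces $T$ to a homogeneous problem $S$ and then controls $\|S\|_{L^p}$ for finite $p$ via the It\^o formula; it does not produce the pathwise $L^\infty(\DD)$ bound your estimate relies on.

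The paper's proof avoids both problems by never estimating a sup norm. The cross term is bounded as $\int|\Tdiff\,\bfV\cdot\nabla\Tho|\,dx \leq C\|\nabla\bfV\|\,\|\nabla\Tho\|\,\|\Tdiff\|^{1/2}\|\nabla\Tdiff\|^{1/2}$, which after Young's inequality leaves a term $\|\nabla\Tho\|^2\|\Tdiff\|^2$ in the Gronwall factor. The quantity that must then be controlled on a positive-probability set is the \emph{time-integrated} dissipation $\int_0^t\|\nabla\Tho\|^2\,ds \leq \tilde{C}_1 + \tilde{C}_2 t + \tilde{C}_3 K$ (Proposition \ref{lem:exp-bound-pr}, via \eqref{eq:bd:theta}), and the spectral gap $\lambda_2$ only needs to beat the linear growth rate $\tilde{C}_2$, not a supremum. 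This is compatible with the noise being unbounded, because the exponential martingale inequality \eqref{eq:mart} controls precisely such linearly-growing integrated quantities uniformly in time with positive (indeed, high) probability. If you want to stay close to your integration-by-parts route, you could replace $\|\T^{(2)}\|_{L^\infty}$ by $\|\T^{(2)}\|_{L^3}$ (using $\|\bfV\|_{L^6}\lesssim\|\nabla\bfV\|\lesssim\ra\|\phi\|$), which again produces a term $\|\T^{(2)}\|_{L^3}^2\|\phi\|^2$ whose time integral grows linearly on a good set by \eqref{eq:lppe-1} with $p=3$; but the estimate must be closed against a time average, never against $\sup_{t\geq 0}$ of a norm of the stochastically forced solution.
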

In \cite{FoldesGlattHoltzRichardsThomann2014b} we also studied asymptotics in the infinite Prandtl limit.
Namely, we showed that as $\prN \rightarrow \infty$, statistically invariant states of \eqref{eq:B:eqn:vel}--\eqref{eq:B:eqn:temp}
(which exist by Theorems \ref{thm:1}--\ref{thm:1b}) converge weakly (in the temperature component) to the unique invariant measure of \eqref{eq:B:vel:inf}--\eqref{eq:B:temp:inf}.
The proof was based on establishing that the Markovian dynamics of \eqref{eq:B:vel:inf}--\eqref{eq:B:temp:inf} are
contractive with respect to an appropriate Kantorovich-Wasserstein metric.  Using this contraction property, we reduced the question of weak
convergence of invariant states as $\prN \rightarrow \infty$ to one of (fixed) finite time asymptotics, and $\prN$-uniform exponential moment
bounds on invariant states of \eqref{eq:B:eqn:vel}--\eqref{eq:B:eqn:temp} and \eqref{eq:B:vel:inf}--\eqref{eq:B:temp:inf}.  In this paper
we include complete proofs of the $\prN$-uniform bounds required in the analysis of \cite{FoldesGlattHoltzRichardsThomann2014b}
(see Sections \ref{sec:3} and \ref{sec:ex}, and specifically Corollaries \ref{cor:ref1} and \ref{cor:2} below).
The proofs of these estimates invoke a comparison argument based on the maximum principle and weighted bounds due to coupling in the systems
\eqref{eq:B:eqn:vel}--\eqref{eq:B:eqn:temp} and \eqref{eq:B:vel:inf}--\eqref{eq:B:temp:inf}.

For the final observation of this manuscript, we illustrate that the background method of Constantin and Doering
\cite{ConstantinDoering1996,ConstantinDoerin1999,ConstantinDoering2001} applies to the stochastic Boussinesq system
\eqref{eq:B:eqn:vel}--\eqref{eq:B:eqn:temp}.  The Nusselt number $\Nu$, defined as a long-time average, is the ratio of the convective to the
conductive heat transport.  The background method provides an upper bound on the Nusselt number as a function of the
strength of the forcing mechanisms in the system (through both boundary and body forcing in the temperature equation), to illustrate an inherent restriction
on the convection produced by heat sources in Boussinesq flows.
In our setting, for $\prN$ large, we will obtain rigorous bounds on the Nusselt number $\Nu$ for \eqref{eq:B:eqn:vel}--\eqref{eq:B:eqn:temp}
in terms of $\ra$ and $\rab$.

More precisely, for $d=2$, $N_1=0$ and $N_2=\infty$
(with sufficient decay in the basis functions $\{\sigma_k\}$ so that \eqref{eq:B:eqn:vel}--\eqref{eq:B:eqn:temp}
is globally well-posed, see Remark \ref{rem:inf}), we define the Nusselt number (relative to $\mu$) by
\footnote{In this context we define the conductive heat transport as $\rab$, the temperature difference between the plates.}
\begin{align}
\label{eq:nu:0}
\Nu := \frac{1}{\rab|\mathcal{D}|}\int\int_{\mathcal{D}}(u_2 T - \pd{2}T)\,dx \,d\mu(\bfU,T) = 1 + \frac{1}{\rab|\mathcal{D}|}\int
\int_{\mathcal{D}}u_2 T\, dx\,d\mu(\bfU,T),
\end{align}
where $\mu$ is the unique invariant measure of the system \eqref{eq:B:eqn:vel}--\eqref{eq:B:eqn:temp} (by Theorem \ref{thm:1}),
and $\pd{2}$ denotes the partial derivative in the $x_2$ direction.
Here the integral involving $\mu$ is taken over an appropriate phase space for \eqref{eq:B:eqn:vel}--\eqref{eq:B:eqn:temp}, see
Section \ref{sec:5} for details.
Let us emphasize that we have defined the Nusselt number as a
statistical average against the unique invariant measure $\mu$, rather than using long time averages of the flow,
which is a more standard interpretation.
However, by invoking the ergodicity from Theorem \ref{thm:1},
 we can equate these two definitions, and apply the background method to obtain quantitative bounds on $\Nu$.
We will reproduce the simplest case of \cite{ConstantinDoering1996} which gives $\Nu \lesssim (\ra\rab)^{1/2}$ for $\ra,\rab \gg 1$ in our context,
but it appears that one could adapt other arguments to produce sharper bounds (see e.g. \cite{OttoSeis2011}).

\begin{Thm}
\label{thm:3} Suppose $d=2$, $N_1=0$, $N_2=\infty$, and assume that $\prN>0$ is sufficiently large such that, by Theorem \ref{thm:1}, the system \eqref{eq:B:eqn:vel}--\eqref{eq:B:eqn:temp} possesses
unique ergodic invariant measure $\mu$.  Then the Nusselt number $\Nu$ given by \eqref{eq:nu:0} satisfies:
\begin{enumerate}[(i)]
\item  For $\mu$-almost every initial condition $(\bfU_0,T_0)\in H$,
\begin{align*}
\Nu = \lim_{t\rightarrow \infty}\frac{1}{\rab |\mathcal{D}|}\E\left(\frac{1}{t}\int_{0}^{t}\int_{\mathcal{D}}(u_2 T - \pd{2}T)(x,s)dx ds\right),
\end{align*}
where $(\bfU,T)$ is the solution of \eqref{eq:B:eqn:vel}--\eqref{eq:B:eqn:temp} with initial data $(\bfU_0,T_0)$.
\item  $\Nu \leq C(\ra\rab)^{1/2}$ for $\ra,\rab>0$ large, where $C=C(|\mathcal{D}|)>0$.
\footnote{Note that $\ra\rab = \frac{g\alpha h^3 T_1}{\nu\kappa}$, which is the usual Rayleigh number in deterministic convection problems.}
\end{enumerate}
\end{Thm}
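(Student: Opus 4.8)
The plan is to prove the two parts in the order stated. For part (i), the goal is to identify the statistical average defining $\Nu$ in \eqref{eq:nu:0} with a long-time average of the heat flux. Since Theorem \ref{thm:1} guarantees that $\mu$ is ergodic, I would invoke a version of the Birkhoff--von Neumann ergodic theorem for the Markov semigroup associated with \eqref{eq:B:eqn:vel}--\eqref{eq:B:eqn:temp}. The functional of interest is $F(\bfU,T) = \frac{1}{\rab|\mathcal{D}|}\int_{\mathcal{D}}(u_2 T - \pd{2}T)\,dx$; the main point to check is that $F \in L^1(\mu)$, so that the ergodic theorem applies and $\frac{1}{t}\int_0^t F(\bfU(s),T(s))\,ds \to \int F\,d\mu = \Nu$ for $\mu$-a.e.\ initial data, both almost surely and in expectation. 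Integrability of $F$ against $\mu$ should follow from the exponential moment bounds on the invariant measure established earlier (e.g.\ Corollaries \ref{cor:ref1}--\ref{cor:2}), which control $\|T\|$ and $\|\bfU\|$ in suitable norms; the cross term $u_2 T$ is then handled by Cauchy--Schwarz. Taking expectations and using dominated convergence (justified by a uniform-in-$t$ moment bound) yields the displayed formula.

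For part (ii), I would implement the background method of Constantin--Doering \cite{ConstantinDoering1996} in the stochastic setting, working with the averaged balances that the ergodicity from part (i) makes available. First decompose the temperature as $T = \tau(x_d) + \theta$, where $\tau$ is a fixed \emph{background profile} satisfying the inhomogeneous boundary conditions $\tau(0)=\rab$, $\tau(1)=0$, so that $\theta$ vanishes on the top and bottom plates. The strategy is to derive a statistically averaged energy identity for $\theta$ by applying It\^o's formula to $\|\theta\|_{L^2}^2$, integrating against $\mu$ (equivalently, taking a long-time average and using ergodicity), and exploiting that $d\mu$ is invariant so the time-derivative term drops out. The It\^o correction will contribute the trace term $\frac{1}{2}\sum_{k=1}^{N_2}\|\sigma_k\|_{L^2}^2$; this is where the stochastic forcing enters, and it must be controlled by the assumed decay of $\{\sigma_k\}$.

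The heart of the argument is the \emph{spectral constraint}: one chooses the background profile $\tau$ so that the resulting quadratic form in $\theta$ (combining the diffusion $\|\nabla\theta\|^2$ against the destabilizing advective term coming from $\tau'(x_d)u_d\theta$) is nonnegative. Following the simplest case of \cite{ConstantinDoering1996}, I would take $\tau$ to be piecewise linear with thin boundary layers of width $\delta \sim (\ra\rab)^{-1/2}$, verify the spectral constraint via a Poincar\'e-type inequality on the boundary layers together with the divergence-free and no-slip conditions on $\bfU$, and thereby bound $\Nu$ by the boundary-layer contribution $\sim \delta^{-1}\rab \sim (\ra\rab)^{1/2}$, giving $\Nu \le C(\ra\rab)^{1/2}$. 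The main obstacle I anticipate is the rigorous passage from the pathwise It\^o balance to a closed averaged identity against $\mu$: one must justify that the martingale term averages to zero and that the stochastic It\^o correction does not spoil the sign of the spectral constraint. Concretely, the trace term $\frac{1}{2}\sum_k\|\sigma_k\|^2$ adds a fixed positive contribution to the dissipation balance, and I would need to check that for $\ra,\rab$ large this term is subdominant to the $(\ra\rab)^{1/2}$ scaling (or can be absorbed into the constant $C$). Once the averaged balance and spectral constraint are in place, the optimization over the boundary-layer width $\delta$ is the routine calculation that reproduces the classical exponent $1/2$.
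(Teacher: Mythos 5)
Your proposal is correct and follows essentially the same route as the paper: part (i) is the Birkhoff ergodic theorem applied after checking $L^1(\mu)$-integrability of the flux functional via the exponential moment bounds, and part (ii) is the Constantin--Doering background method with a boundary-layer profile of width $\delta\sim(\ra\rab)^{-1/2}$, the spectral constraint verified through the no-slip/Dirichlet conditions, and the It\^o correction $\tfrac12\|\sigma\|^2$ tracked through the averaged balances. You correctly anticipate the one genuinely stochastic feature, namely that the It\^o trace term survives as an additive contribution (the paper obtains $\Nu\leq C\sqrt{\ra\rab}+\tfrac{1}{\rab^2|\DD|}-1$), which is harmless in the stated regime of large $\ra,\rab$.
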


The remainder of this manuscript is organized as follows.  In Section \ref{sec:2} we provide the precise mathematical framework
of the manuscript.  In Section \ref{sec:3} we establish a priori estimates on solutions to \eqref{eq:B:eqn:vel}--\eqref{eq:B:eqn:temp},
\eqref{eq:B:vel:inf}--\eqref{eq:B:temp:inf}, and a larger class of stochastic drift-diffusion equations.  We discuss the existence of invariant states in
Section \ref{sec:ex}, and present the proof of Theorem \ref{thm:1b}.  Section \ref{sec:4} is devoted to the proofs
of our main unique ergodic theorems, Theorem \ref{thm:1} and Theorem \ref{thm:2}.  In Section \ref{sec:5} we show how the background method adapts to our stochastic setting, and
provide the proof of Theorem \ref{thm:3}.  Lastly, we include an appendix (Appendix \ref{sec:non-dim:equations}) which provides some details of rescaling arguments
for our model equations.

\section{Mathematical Framework}
\label{sec:2}
In this manuscript we study the stochastic Boussinesq equations \eqref{eq:B:eqn:vel}--\eqref{eq:B:eqn:temp} and \eqref{eq:B:vel:inf}--\eqref{eq:B:temp:inf}.
As usual, these equations are rigorously understood in a time integrated sense.
The unitless physical parameters in the problem are the \emph{Prandtl number}, $\prN$, and the
\emph{Rayleigh numbers}, $\ra$ and $\rab$.  The system \eqref{eq:B:eqn:vel}--\eqref{eq:B:eqn:temp} is the result of a rescaling.  In fact,
\begin{align}
\label{eq:phys}
  \prN = \frac{\nu}{\kappa}, \quad \ra=  \frac{g \alpha  \gamma h^{4-d/2}}{\nu \kappa^{3/2}}, \quad
 	\rab = \frac{\sqrt{\kappa}h^{d/2-1} T_1}{\gamma}\,,
\end{align}
where $h$ represents the height of the domain, $T_1$ the applied temperature difference,
and $\gamma$ is a stochastic heat flux representing the strength of the random forcing in the temperature component
of the original variables.
Also $\nu$ is the kinematic viscosity, $\kappa$ is the coefficient of thermal diffusivity,
$g$ is the gravitational constant and  $\alpha$ is the coefficient of thermal expansion.
The orthogonal basis $\{\sigma_k\}$ appearing in the stochastic terms of \eqref{eq:B:eqn:temp} have been normalized,
depending on the number of forced modes, such that for a given fixed $N_2>0$,
\begin{align*}
\|\sigma\|^{2}:=\sum_{k=1}^{N_2} \|\sigma_k\|_{L^2(\DD)}^2 = 1 \,,
\end{align*}
with the strength of the body forcing expressed through $\ra$ and $\rab$.
In particular we always assume forcing in the temperature equation, but we
prescribe no normalization condition on forcing in the velocity equation.
We provide more details on the formulation of our system in Appendix \ref{sec:non-dim:equations} below.

We will often subtract a linear profile from $T$ in order to replace \eqref{eq:B:eqn:temp} with
a system satisfying homogeneous boundary conditions.  Define
\begin{align}
\label{def:theta}
  \Trs := T - \rab(1 -x_d),
\end{align}
we obtain
\begin{align}
\frac{1}{\prN}&(d \bfU + \bfU \cdot \nabla \bfU dt) + \nabla p dt = \Delta \bfU dt + \ra \hate \Trs dt
+ \sum_{k = 1}^{N_1} \tilde{\sigma}_k d\tilde{W}^k, \quad \nabla \cdot \bfU = 0,
  \label{eq:B:eqn:vel:2}\\
  &d \Trs + \bfU \cdot \nabla \Trs dt = \rab u_d dt + \Delta \Trs dt + \sum_{k =1}^{N_2} \sigma_k dW^k,
  \label{eq:theta}
\end{align}
with the boundary conditions modified on the physical boundary as
\begin{align}
   \bfU_{|x_d = 0} = \bfU_{|x_d = 1} = 0, \quad \Trs_{|x_d = 0} =  \Trs_{|x_d = 1} = 0,
   \label{eq:bc:3}
\end{align}
and initial conditions
$\Trs(t=0)=\Trs_0=T_0-\rab(1-x_d)$.
Notice that we have implicitly modified the pressure in \eqref{eq:B:eqn:vel:2} by
$\rab(x_d-\frac{1}{2}x_d^2)$ since $(1-x_d)\hate=\nabla (x_d-\frac{1}{2}x_d^2)$.

We will also consider the infinite-Prandtl Boussinesq system
\begin{align}
 &(-\Delta \Uo + \nabla p) dt =  \ra \hate \Tho dt, \quad \nabla \cdot \Uo = 0,
  \label{eq:vel:inf}\\
\label{eq:theta:inf}
  &d \Tho + \Uo \cdot \nabla \Tho dt = \rab u^{0}_d dt + \Delta \Tho dt + \sum_{k =1}^{N_2} \sigma_k dW^k
\end{align}
complemented with
\begin{align*}
\Tho_{|x_d = 0} =  \Tho_{|x_d = 1} = 0,
\end{align*}
and periodic boundary conditions in the horizontal directions.

For much of the analysis that follows, we will establish results (well-posedness, existence
and uniqueness of invariant states) for
the systems \eqref{eq:B:eqn:vel:2}--\eqref{eq:theta} and \eqref{eq:vel:inf}--\eqref{eq:theta:inf}, but these
results translate easily back to the original variables in
\eqref{eq:B:eqn:vel}--\eqref{eq:B:eqn:temp} and \eqref{eq:B:vel:inf}--\eqref{eq:B:temp:inf}.

\subsection{Functional Setting}

The equations \eqref{eq:B:eqn:vel:2}--\eqref{eq:theta} supplemented with \eqref{eq:bc:3} may be posed
mathematically as follows.  Define the phase space $H = H_1 \times H_2$ with
\begin{align*}
  H_1 &= \{ \bfU \in ( L^2(\DD))^{d} : \nabla \cdot \bfU = 0, \bfU\cdot \mathbf{n}_{| x_d = 0,1} = 0, \bfU \textrm{ is periodic in $(x_1,\ldots,x_{d-1})$}  \}, \\
  H_2 &= \{  \Trs \in L^2(\DD) : \Trs \textrm{ is periodic in $(x_1,\ldots,x_{d-1})$}  \}.
\end{align*}
Here $\mathbf{n} = (0,\ldots,0, \pm 1)$ is the outward normal to $\DD$.
We next set $V = V_1 \times V_2$ with
\begin{align*}
  V_1 &= \{ \bfU \in ( H^1(\DD))^{d} : \nabla \cdot \bfU = 0, \bfU_{| x_d = 0,1} = 0, \bfU \textrm{ is periodic in $(x_1,\ldots,x_{d-1})$}  \}, \\
  V_2 &= \{  \Trs \in H^1(\DD) : \Trs_{| x_d = 0,1} = 0, \Trs \textrm{ is periodic in $(x_1,\ldots,x_{d-1})$}  \}.
\end{align*}
For further background on this general functional setting see e.g. \cite{ConstantinFoias88,Temam2001}.

We have the following general well-posedness results concerning \eqref{eq:B:eqn:vel:2}--\eqref{eq:bc:3}.  The cases
$d =2$ and $d =3$ are quite different reflecting the situation encountered for the Navier-Stokes equations (deterministic or stochastic) in $d = 2,3$. We
start with $d =2$.

\begin{Prop}
\label{prop:existence-uniqueness:2d}
Suppose $d=2$.  Fix a stochastic basis $\mathcal{S} = (\Omega, \mathcal{F}, \Prb, \{\mathcal{F}_t\}_{t \geq 0}, W)$ and
any $U_0 = (\bfU_0, \Trs_0) \in L^2(\Omega, H)$ which is $\mathcal{F}_0$-measurable relative to this
basis.  Then there exists a unique
\begin{align}
  U = (\bfU, \Trs) \in L^2( \Omega; L^2_{loc}([0,\infty); V) \cap C([0,\infty); H))
  \label{eq:U:reg}
\end{align}
which is predictable (in particular $\mathcal{F}_t$-adapted) satisfying \eqref{eq:B:eqn:vel:2}--\eqref{eq:bc:3}
weakly.
Adopting the notation $U(t, U_0)$ for the solution of \eqref{eq:B:eqn:vel:2}--\eqref{eq:bc:3} corresponding to a given (deterministic) $U_0 \in H$
we have that $U$ is continuous in $U_0$ for every fixed $t \geq 0$.  As such, \eqref{eq:B:eqn:vel:2}--\eqref{eq:bc:3} generates a
Markov semigroup according to
\begin{align*}
  P_t \phi(U_0) = \E \phi(U(t,U_0))
\end{align*}
for any bounded measurable
$\phi : H \to \RR$.  Furthermore, $\{P_t\}_{t \geq 0}$ is Feller, that is,
$P_t$ maps bounded continuous
functions to bounded continuous functions for every $t \geq 0$.
\end{Prop}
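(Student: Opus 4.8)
The plan is to treat this as an adaptation of the classical well-posedness theory for the two-dimensional stochastic Navier--Stokes equations to the Boussinesq coupling, exploiting that the noise is additive. First I would remove the stochastic integrals by subtracting Ornstein--Uhlenbeck type processes: let $\mathbf{Z}$ solve the linear stochastic Stokes system $\frac{1}{\prN}d\mathbf{Z} + \nabla q\, dt = \Delta \mathbf{Z}\, dt + \sum_{k=1}^{N_1}\tilde{\sigma}_k d\tilde{W}^k$, and let $\eta$ solve the linear stochastic heat equation $d\eta = \Delta \eta\, dt + \sum_{k=1}^{N_2}\sigma_k dW^k$, both with the boundary conditions \eqref{eq:bc:3} and zero initial data. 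Since $N_1,N_2<\infty$ and the $\{\tilde{\sigma}_k\},\{\sigma_k\}$ are smooth eigenfunctions, standard linear SPDE estimates give that $(\mathbf{Z},\eta)$ is a predictable process whose trajectories lie in $C([0,\infty);V)$ with as much additional spatial regularity as needed, together with all moment bounds. Setting $\bfV = \bfU - \mathbf{Z}$ and $\psi = \Trs - \eta$, the pair $(\bfV,\psi)$ solves, $\omega$-by-$\omega$, a deterministic parabolic system of the same structure with $(\mathbf{Z},\eta)$ entering as smooth, time-dependent forcing and advection coefficients. This reduces the problem to a pathwise PDE solvable by the energy method.

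Next I would construct solutions by Galerkin truncation, projecting $(\bfV,\psi)$ onto the spans of the first $n$ Stokes/Laplace eigenfunctions to get a finite-dimensional random ODE system with locally Lipschitz coefficients, hence local solutions. The crux is uniform a priori bounds. Testing the velocity equation against $\bfV$ and the temperature equation against $\psi$, the energy-critical parts of the transport nonlinearities $\bfU \cdot \nabla \bfU$ and $\bfU \cdot \nabla \Trs$ vanish after integration by parts (using $\nabla\cdot\bfU = 0$ and the boundary conditions), leaving the dissipation $\|\nabla \bfV\|^2,\|\nabla \psi\|^2$, the linear buoyancy coupling $\ra \int_{\DD}\psi v_d$ and $\rab\int_{\DD} v_d\psi$, and remainders linear in $(\bfV,\psi)$ involving $(\mathbf{Z},\eta)$. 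The coupling terms are not sign-definite, but they are lower order: I would bound them via Young's and Poincar\'{e}'s inequalities against a small fraction of the dissipation plus the $L^2$ energies, and then, forming a $\prN$-weighted sum of the two balances, close a uniform (possibly exponentially time-growing) $L^\infty_t L^2 \cap L^2_t H^1$ bound matching the class \eqref{eq:U:reg}. These bounds hold equally in $d=3$, so two-dimensionality is not yet essential.

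With uniform bounds in hand I would pass to the limit via Aubin--Lions--Simon compactness to obtain a weak solution in the class \eqref{eq:U:reg}, and recover $C([0,\infty);H)$ continuity from the equation. Uniqueness and continuous dependence on data are where $d=2$ enters: taking the difference $(\mathbf{w},\xi)$ of two solutions and testing against itself, the advection terms no longer fully cancel and must be handled using the two-dimensional Ladyzhenskaya inequality $\|\mathbf{w}\|_{L^4}^2 \lesssim \|\mathbf{w}\|_{L^2}\|\nabla\mathbf{w}\|_{L^2}$, which lets the quadratic term be absorbed into the dissipation at the cost of a Gronwall coefficient proportional to $\|\nabla\bfU\|_{L^2}^2$ (integrable in time since $\bfU\in L^2_t H^1$). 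The same computation yields a locally Lipschitz bound for $U_0 \mapsto U(t,U_0)$ in $H$, hence continuity in $U_0$. I expect this difference estimate to be the main obstacle: the nonlinear advection interacts with the buoyancy coupling and with the auxiliary process $\mathbf{Z}$, so the interpolation must be arranged so that every cross term is either antisymmetric (and vanishes) or dominated by the diffusion, but this is exactly the standard two-dimensional mechanism.

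Finally, the Markov and Feller properties follow formally once pathwise uniqueness and continuous dependence are in hand. Uniqueness forces the solution started at time $s$ to depend on the driving noise only through its increments after $s$, which are independent of $\mathcal{F}_s$; together with the time-homogeneity of the additive forcing this gives the Markov property and the semigroup identity $P_t\phi(U_0) = \E\phi(U(t,U_0))$. The Feller property is then immediate from the continuity of $U_0 \mapsto U(t,U_0)$ in $H$ and dominated convergence applied to bounded continuous $\phi$.
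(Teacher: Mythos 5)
Your proposal is correct, but it follows a genuinely different route from the one the paper indicates. The paper's proof (sketched after Proposition \ref{prop:existence-uniqueness:3d}) is the stochastic-compactness route: a Faedo--Galerkin scheme, a priori moment estimates, tightness and the Skorokhod embedding theorem to produce a martingale solution on a new stochastic basis, and then pathwise uniqueness combined with the Gy\"ongy--Krylov variant of the Yamada--Watanabe theorem to recover convergence (and hence a pathwise solution) on the originally given basis. You instead exploit the additive, spatially smooth, finitely supported noise to subtract Ornstein--Uhlenbeck processes and reduce to a random PDE solved $\omega$-by-$\omega$ with deterministic Aubin--Lions compactness, so no change of stochastic basis and no Yamada--Watanabe step is needed. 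Both are standard and both work here; your approach is shorter and avoids the measure-theoretic machinery, while the paper's approach has the advantage of treating $d=2$ and $d=3$ uniformly (in $d=3$ only martingale solutions are obtained, so the Skorokhod step is unavoidable there) and of extending to rougher or infinite-dimensional noise, where the Ornstein--Uhlenbeck correction has limited regularity and the pathwise residual equation is harder to close. One small imprecision: after the subtraction the remainder terms are not all linear in $(\bfV,\psi)$ --- e.g.\ $\int_{\DD} (\bfV\cdot\nabla \mathbf{Z})\cdot\bfV\,dx$ is quadratic in $\bfV$ with a smooth coefficient --- but since such terms are controlled by $\|\nabla\mathbf{Z}\|_{L^\infty}\|\bfV\|^2$ they still close under Gr\"onwall, so this does not affect the argument. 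Your handling of the buoyancy coupling, the two-dimensional Ladyzhenskaya estimate for uniqueness and continuous dependence, and the deduction of the Markov and Feller properties all match the standard mechanism the paper relies on.
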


In $d = 3$ the results are weaker, a reflection of our incomplete understanding of the 3d Navier-Stokes equation at present.  In what follows
we use $Pr(H)$ to denote the space of Borel probability measures on $H$, and let $\mathcal{B}(H)$ denote the
space of Borel measurable subsets of $H$.
\begin{Prop}
\label{prop:existence-uniqueness:3d}
Suppose $d=3$.  Given any $\mathfrak{m} \in Pr(H)$ with $\int \|(\bfU,\Trs)\|_{L^2}^2 d\mathfrak{m}(\bfU,\Trs) < \infty$, there exists a stochastic
basis $\mathcal{S} = (\Omega, \mathcal{F}, \Prb, \{\mathcal{F}_t\}_{t \geq 0}, W)$ and a stochastic process $U=(\bfU,\Trs)$, relative to that basis,
with $$U \in L^2( \Omega; L^2_{loc}([0,\infty); V)\cap L^\infty_{loc}([0,\infty); H)).$$  Also, $U$ is $\mathcal{F}_t$ adapted, weakly continuous, satisfies
\eqref{eq:B:eqn:vel:2}--\eqref{eq:bc:3}, and the law of $U(0)$ is $\mathfrak{m}$.  Moreover, such a process $U$ exists satisfying the energy inequalities
\eqref{eq:u:ito} and \eqref{eq:theta:ito}, and if we assume that for some $p\geq 3$ we have $\int \|\Trs\|_{L^p}^2 d\mathfrak{m}(\Trs) < \infty$,
then we can further suppose that $\Trs\in L^2(\Omega;L^{\infty}_{loc}([0,\infty);L^p))$.
\end{Prop}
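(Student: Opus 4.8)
The plan is to construct a martingale (weak) solution following the regularization strategy of \cite{FlandoliGatarek1} (cf. \cite{GlattHoltzSverakVicol2013}), with the key structural choice of truncating only the velocity equation. Concretely, let $P_n$ denote the projection onto the span of the first $n$ Stokes eigenfunctions and consider the system obtained by replacing \eqref{eq:B:eqn:vel:2} with its Galerkin projection under $P_n$, while leaving the temperature equation \eqref{eq:theta} intact. For fixed $n$ the velocity component $\bfU_n$ then solves a finite-dimensional SDE whose coefficients depend on $\Trs_n$ only through the bounded, linear buoyancy term $\ra \hate P_n \Trs_n$, whereas, given the smooth finite-dimensional drift $\bfU_n$, the temperature equation is a genuine linear advection--diffusion SPDE. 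A standard fixed-point/truncation argument (localizing in the velocity nonlinearity and globalizing via the a priori bounds below) yields a unique adapted solution $(\bfU_n, \Trs_n)$ on the original stochastic basis, with $\Trs_n$ enjoying the full parabolic regularity afforded by the heat semigroup.

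Next I would derive a priori estimates uniform in $n$. Testing the projected velocity equation against $\bfU_n$ and the temperature equation against $\Trs_n$, the advective nonlinearities drop by the divergence-free condition and the orthogonality of $P_n$, while the two linear coupling terms $\ra\langle \hate \Trs_n, \bfU_n\rangle$ and $\rab\langle u_{n,d}, \Trs_n\rangle$ are absorbed through a suitably weighted combination of the two energy balances; together with the It\^o trace correction $\sum_k \|\sigma_k\|_{L^2}^2$ this produces exactly the energy inequalities \eqref{eq:u:ito}--\eqref{eq:theta:ito} and a bound in $L^2(\Omega; L^\infty_{loc}([0,\infty); H) \cap L^2_{loc}([0,\infty); V))$ depending only on the second moment of $\mathfrak{m}$. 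For the higher integrability claim, I would test the untruncated temperature equation against $|\Trs_n|^{p-2}\Trs_n$: since $\bfU_n$ is divergence free the advection term integrates to zero, $\int_{\DD} (\bfU_n \cdot \nabla \Trs_n)|\Trs_n|^{p-2}\Trs_n\, dx = -\tfrac1p\int_{\DD}(\nabla\cdot\bfU_n)|\Trs_n|^p\,dx = 0$, so an It\^o computation in $L^p$ closes and yields the $L^2(\Omega; L^\infty_{loc}([0,\infty); L^p))$ bound in terms of $\int \|\Trs\|_{L^p}^2 \, d\mathfrak{m}$. This is precisely where retaining the advection--diffusion form of \eqref{eq:theta} is essential: a Galerkin truncation of the temperature equation would leave a nonvanishing projected advection term and destroy the $L^p$ estimate.

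To extract a limit I would establish tightness of the laws of $(\bfU_n, \Trs_n)$. The uniform $L^2_t V \cap L^\infty_t H$ bounds give spatial compactness via $V \hookrightarrow\hookrightarrow H$; for the time variable I would bound a fractional Sobolev norm $W^{\alpha, 2}([0,T]; V')$ using the equation, the deterministic terms yielding H\"older-in-time regularity valued in $V'$ and the stochastic integral lying in $C^\beta_t$ for $\beta<1/2$. An Aubin--Lions--Simon compactness criterion then renders the family tight in $L^2_{loc}([0,\infty); H)$, and one obtains tightness in a space of weakly continuous $H$-valued paths as well. Invoking Prokhorov and the Skorokhod representation theorem, I pass to a new stochastic basis carrying almost surely convergent copies $(\bfU_n, \Trs_n) \to (\bfU, \Trs)$.

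The final step is to identify the limit and transfer the estimates. The strong convergence in $L^2_{loc}([0,\infty); H)$ is what permits passage to the limit in the quadratic terms $\bfU_n \cdot \nabla \bfU_n$ and $\bfU_n \cdot \nabla \Trs_n$ (after integrating by parts onto the test functions), while the linear and stochastic terms pass by weak convergence together with the martingale characterization of the noise; this shows $(\bfU, \Trs)$ solves \eqref{eq:B:eqn:vel:2}--\eqref{eq:bc:3} weakly with $U(0)$ of law $\mathfrak{m}$, adaptedness being re-established relative to the new filtration. The energy inequalities \eqref{eq:u:ito}--\eqref{eq:theta:ito} and the $L^p$ bound survive by weak lower semicontinuity of the norms and Fatou's lemma (which is also why only inequalities, rather than the It\^o equalities available at the Galerkin level, are retained), and weak continuity in $H$ follows from the regularity of the realized equation. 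I expect the principal obstacle to be the compactness and passage-to-the-limit step: because only the velocity is truncated, compactness for the temperature component must be harvested from its own parabolic smoothing, the tightness argument must be run simultaneously for both components, and the advective nonlinearities must be shown to converge using the strong $L^2_t H$ convergence rather than any bound available uniformly in $n$.
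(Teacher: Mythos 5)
Your overall strategy is the one the paper intends: it does not write out a proof of Proposition \ref{prop:existence-uniqueness:3d} but defers to the Faedo--Galerkin/compactness/Skorokhod machinery of \cite{FlandoliGatarek1, DebusscheGlattHoltzTemam1} and related works, and the specific device you propose --- truncating only the velocity equation so that $\Trs^n$ continues to solve a genuine drift--diffusion equation of the form \eqref{eq:bous:drift:diff:stoch:sys:1} --- is exactly the ``modified Galerkin scheme'' the paper deploys later in the proof of Proposition \ref{prop:5}. The energy estimates, the fractional-in-time bounds, the Aubin--Lions/Arzel\`a--Ascoli tightness, and the Skorokhod passage to the limit are all as in the paper (modulo your choice of $V'$ as the target space for the time-regularity estimate; in $d=3$ the term $\bfU_n\cdot\nabla\bfU_n$ is more comfortably measured in the dual of $V\cap (H^3)^d$, as the paper does in \eqref{eq:uniform:bnd:stat:sols:2}, but this is cosmetic).

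There is, however, a concrete gap in your $L^p$ step. You test the $\Trs_n$-equation, i.e.\ \eqref{eq:theta}, against $|\Trs_n|^{p-2}\Trs_n$ and observe that the advection term vanishes, but the equation for $\Trs_n$ also carries the source term $\rab\, u_{n,d}\,dt$, which you do not address. The resulting contribution $\rab\int_{\DD} u_{n,d}\,|\Trs_n|^{p-2}\Trs_n\,dx$ does not vanish and cannot be absorbed by the $L^2$ bound on $\bfU_n$ alone; closing the estimate this way requires H\"older together with the Sobolev embedding applied to $\||\Trs_n|^{p/2}\|_{H^1}$ (i.e.\ absorbing into the dissipation $\int|\Trs_n|^{p-2}|\nabla\Trs_n|^2$), which is doable but is precisely the delicate point, not a formality. (Passing to $T_n=\Trs_n+\rab(1-x_d)$ removes the source term but reintroduces inhomogeneous Dirichlet data, so the integration by parts in $-\int\Delta T_n|T_n|^{p-2}T_n$ produces boundary terms.) The paper sidesteps both difficulties through the comparison argument of Proposition \ref{prop:dde}: the maximum principle yields the pointwise bound $|\Trs_n|\leq |S_n|+2\rab$ where $S_n$ solves the \emph{homogeneous} problem \eqref{eq:S:def} with neither a source term nor inhomogeneous boundary data, and the $L^p$ It\^o computation is then carried out for $S_n$, where it closes exactly as you describe. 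Since the martingale solution is later required (in Proposition \ref{lem:exp-bound}) to satisfy the full conclusions of Proposition \ref{prop:dde} with $q=3$, including exponential moments, you should route the $L^p$ bound through that comparison principle rather than a direct $L^p$ estimate on $\Trs_n$.
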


\begin{Rmk}
  Notice that the stochastic elements in the problem are fixed in advance
  in $d=2$ and
  are obtained as a part of the solution in $d =3$.  We say that the solutions given in Proposition~\ref{prop:existence-uniqueness:2d}
  are 'pathwise' solutions whereas those in $d =3$ are `Martingale' solutions.
\end{Rmk}

The existence and uniqueness results given in Propositions~\ref{prop:existence-uniqueness:2d},~\ref{prop:existence-uniqueness:3d} can be established with the
aid of a Faedo-Galerkin scheme.  One establishes sufficient compactness from standard a priori estimates to pass to a limit on a new stochastic
basis using the Skorokhod embedding theorem.   Since a priori estimates yield (pathwise) uniqueness results for the 2 dimensional case,
convergence in the given stochastic basis may be recovered from a variation of the Yamada-Watanabe theorem found in \cite{gyongy1996}.
The details of these proofs are technically involved but follow very closely the analysis in numerous previous works, see e.g.
\cite{ZabczykDaPrato1992, FlandoliGatarek1, Bensoussan1, GlattHoltzZiane2, DebusscheGlattHoltzTemam1} for further details.

We also have the following well-posedness result for the infinite Prandtl system \eqref{eq:vel:inf}--\eqref{eq:theta:inf}.
\begin{Prop}
Suppose $d=2$ or $d=3$.  Fix a stochastic basis $\mathcal{S}$ and any $\mathcal{F}_0$-measurable random variable $\Tho_0 \in L^2(\Omega, H_2)$.  Then there
exists a unique process
\begin{align*}
   \Tho \in L^2(\Omega; L^2_{loc}([0,\infty); V_2) \cap C([0,\infty); H_2)),
\end{align*}
which is $\mathcal{F}_{t}$-adapted, weakly solves \eqref{eq:vel:inf}--\eqref{eq:theta:inf} and satisfies the initial condition $\Tho(0) = \Tho_0$.
Adopting the notation $\Tho(t, \Tho_0)$ for the solution of \eqref{eq:vel:inf}--\eqref{eq:theta:inf} corresponding to a given (deterministic)
$\Tho_0 \in H_2$ we have that $\Tho$ is continuous in $\Tho_0$ for every fixed $t \geq 0$.
As such, \eqref{eq:vel:inf}--\eqref{eq:theta:inf} generates a Feller
Markov semigroup according to
\begin{align*}
  P_t \phi(\Tho_0) = \E \phi(\Tho(t,\Tho_0))
\end{align*}
for any bounded measurable
$\phi : H_2 \to \RR$.
\label{prop:existence-uniqueness:inf}
\end{Prop}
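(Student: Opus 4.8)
The plan is to exploit the fact that in \eqref{eq:vel:inf}--\eqref{eq:theta:inf} the velocity is \emph{enslaved} to the temperature through a stationary problem, which collapses the system into a single parabolic stochastic equation for $\Tho$ whose nonlinearity, thanks to elliptic regularity, is uniformly subcritical in both $d=2$ and $d=3$. First I would introduce the stationary Stokes solution operator: given $g\in H_2$, let $Sg\in V_1$ be the unique solution of $-\Delta(Sg)+\nabla p = \ra\hate g$, $\nabla\cdot(Sg)=0$, with the boundary conditions built into $V_1$. Standard Stokes regularity gives $\|Sg\|_{H^2}\lesssim \ra\|g\|_{L^2}$, and since $H^2(\DD)\hookrightarrow L^\infty(\DD)$ for $d\le 3$, also $\|Sg\|_{L^\infty}+\|\nabla Sg\|_{L^2}\lesssim \|g\|_{L^2}$. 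Setting $\Uo=S\Tho$, the system reduces to the single equation
\begin{align}
d\Tho + (S\Tho)\cdot\nabla\Tho\,dt = \rab\,(S\Tho)_d\,dt + \Delta\Tho\,dt + \sum_{k=1}^{N_2}\sigma_k\,dW^k, \qquad \Tho|_{x_d=0,1}=0,
\notag
\end{align}
with horizontal periodicity; here $(S\Tho)_d=u^0_d$ depends linearly on $\Tho$.

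Existence I would obtain by a Galerkin scheme applied only to the temperature, projecting onto the eigenbasis $\{\sigma_k\}$ of the Laplacian under the boundary conditions of $V_2$ and recovering the approximate velocity through $S$; this preserves the advection--diffusion structure exactly as in the discussion following Proposition~\ref{prop:existence-uniqueness:3d}. The key a priori bound follows from It\^o applied to $\|\Tho^n\|_{L^2}^2$: since $S\Tho^n$ is divergence free with $S\Tho^n\cdot\mathbf{n}=0$ on $x_d=0,1$, the advection term is energy neutral, $\langle (S\Tho^n)\cdot\nabla\Tho^n,\Tho^n\rangle=0$, while $|\rab\langle (S\Tho^n)_d,\Tho^n\rangle|\lesssim \rab\|\Tho^n\|_{L^2}^2$ and the noise contributes the finite trace $\|\sigma\|^2=1$. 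Gronwall then yields bounds in $L^2(\Omega; L^\infty_{loc}([0,\infty);H_2)\cap L^2_{loc}([0,\infty);V_2))$, uniformly in $n$ and independently of the dimension. Compactness (Aubin--Lions in space together with a fractional time bound from the martingale term, via Skorokhod as in the references cited after Proposition~\ref{prop:existence-uniqueness:3d}) lets me pass to the limit and produce a solution with the stated regularity, the continuity in $C([0,\infty);H_2)$ being inherited from the equation and the continuity of the stochastic integral in $H_2$.

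The decisive step, which I expect to be the heart of the matter, is pathwise uniqueness, and here the enslaving is exactly what makes $d=3$ as tractable as $d=2$. For two solutions $\Tho,\hat\Tho$ driven by the same noise, the difference $w=\Tho-\hat\Tho$ solves $dw=[\Delta w+\rab(Sw)_d-(Sw)\cdot\nabla\Tho-(S\hat\Tho)\cdot\nabla w]\,dt$ with $w(0)=0$. Pairing with $w$, the term advected by the smooth enslaved velocity is again energy neutral, $\langle (S\hat\Tho)\cdot\nabla w,w\rangle=0$; the linear buoyancy term obeys $|\rab\langle (Sw)_d,w\rangle|\lesssim \|w\|_{L^2}^2$; and integrating by parts using $\nabla\cdot(Sw)=0$ gives $|\langle (Sw)\cdot\nabla\Tho,w\rangle| = |\langle \Tho,(Sw)\cdot\nabla w\rangle| \le \|\Tho\|_{L^2}\|Sw\|_{L^\infty}\|\nabla w\|_{L^2}\lesssim \|\Tho\|_{L^2}\|w\|_{L^2}\|\nabla w\|_{L^2}$, where the $L^\infty$ control of $Sw$ is precisely the gain returned by elliptic regularity in $d\le 3$. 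Absorbing the gradient by Young's inequality yields $\tfrac{d}{dt}\|w\|_{L^2}^2+\|\nabla w\|_{L^2}^2\le C(1+\|\Tho\|_{L^2}^2)\|w\|_{L^2}^2$, and since $\sup_{[0,T]}\|\Tho\|_{L^2}<\infty$ almost surely, Gronwall forces $w\equiv 0$. With pathwise uniqueness established, the Gy\"ongy--Krylov/Yamada--Watanabe principle \cite{gyongy1996} upgrades the constructed solution to a strong solution on the prescribed basis $\mathcal{S}$ --- this is why, in contrast to the finite-Prandtl velocity equation, the statement may assert a \emph{unique} solution on a fixed basis in both $d=2$ and $d=3$.

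Finally, continuity in the initial data and the Feller property follow from the same computation applied to two deterministic data $\Tho_0,\hat\Tho_0$: the bound $\|w(t)\|_{L^2}^2\le \|\Tho_0-\hat\Tho_0\|_{L^2}^2\exp\!\big(C\int_0^t(1+\|\Tho\|_{L^2}^2)\,ds\big)$ gives pathwise continuous dependence, after which $P_t\phi\in C_b(H_2)$ for $\phi\in C_b(H_2)$ follows by dominated convergence. The only genuine subtlety beyond bookkeeping is controlling this random Gronwall factor, which I would handle by a stopping-time/moment argument; every other estimate is rendered routine by the two derivatives the Stokes inversion supplies.
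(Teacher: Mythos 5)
Your proposal is correct and follows essentially the route the paper intends: the paper states Proposition~\ref{prop:existence-uniqueness:inf} without a written proof, deferring to the Faedo--Galerkin/compactness/Yamada--Watanabe machinery sketched after Proposition~\ref{prop:existence-uniqueness:3d}, and the decisive point you identify --- that the enslaved velocity $\Uo=S\Tho$ gains elliptic regularity, so the difference estimate closes pathwise in both $d=2$ and $d=3$ --- is exactly the mechanism the paper exploits (compare \eqref{eq:inf:u:bound} and the analogous difference estimates in the proof of Theorem~\ref{thm:2}, which use the first-order bound $\|\nabla\bfV\|\le \ra\|\Tdiff\|$ with the $L^6$ embedding rather than your $H^2$/$L^\infty$ variant; both work). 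Your uniqueness and Feller arguments are sound as written.
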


\begin{Rmk}\label{rem:inf}
We remark that Propositions \ref{prop:existence-uniqueness:2d}, \ref{prop:existence-uniqueness:3d},
and \ref{prop:existence-uniqueness:inf} hold with $N_1=\infty$ or $N_2=\infty$ (note that $N_1=0$ for \eqref{eq:vel:inf}--\eqref{eq:theta:inf})
provided we impose sufficient decay in the bases $\{\tilde{\sigma}_{k}\}$ and $\{\sigma_{k}\}$ for $H_1$ and $H_2$, respectively.
For example, if we impose that $\sum_{k=1}^{\infty}\|\tilde{\sigma}_{k}\|_{H^s}^{2}<\infty$ and $\sum_{k=1}^{\infty}\|\sigma_{k}\|_{H^s}^{2}<\infty$
for $s>0$ sufficiently large, then these propositions hold as stated.
\end{Rmk}

\subsection{Notation}

We denote the $L^2$ norm by $\| \cdot\|$ in all that follows.
To simplify notation we assume that the domain $\DD$ is such that the Poincar\' e constant is equal to one:
\begin{align*}
\|\nabla f\| \geq \|f\|.
\end{align*}
We use $C$ to denote various constants which may depend on the size of the domain.  If a constant depends on other quantities, we
will state this explicitly.

\section{A Priori Estimates}
\label{sec:3}

In this section we collect some a priori estimates on solutions to \eqref{eq:B:eqn:vel:2}--\eqref{eq:theta}, \eqref{eq:vel:inf}--\eqref{eq:theta:inf} and related systems.  We begin by establishing
estimates on solutions for a class of stochastic drift-diffusion equations which appear throughout our analysis.
The proofs will rely on a comparison argument, an application of the maximum principle, and certain exponential martingale inequalities.

\subsection{Bounds for a class of Stochastic Drift-Diffusion Equations}
\label{sec:mom:est:drift:diff:eqn}

Fix a stochastic basis $\mathcal{S} = (\Omega, \mathcal{F}, \Prb, \{\mathcal{F}_t\}_{t \geq 0}, W)$ and
consider the following stochastic drift diffusion equation
\begin{align}
  d \xi + \bfV \cdot \nabla \xi dt = (\rab \cdot v_d + \Delta \xi)dt + \sum_{k =1}^N \sigma_k dW^k, \quad \xi(0) = \xi_0
  \label{eq:bous:drift:diff:stoch:sys:1}
\end{align}
evolving on the domain $\DD = [ 0, L]^{d-1} \times [0,1]$,
where $\bfV$ is a is weakly continuous, $\mathcal{F}_t$-adapted, and
divergence free vector satisfying
\begin{equation*}
\bfV = (v_1, \ldots,v_d)
\in L^2(\Omega;L^2_{loc}([0,\infty); V_1) \cap L^{\infty}_{loc}([0,\infty); H_1)) \,.
\end{equation*}
and both $\bfV$ and $\xi$ satisfy the mixed Dirichlet-periodic
boundary conditions
\begin{align}
      \bfV_{|x_d = 0} = \bfV_{|x_d = 1}  = 0,\quad  \xi_{|x_d = 0} =  \xi_{|x_d = 1} = 0, \quad \bfV,
      \xi  \textrm{ are periodic in } (x_1, \ldots,x_{d-1}).
   \label{eq:bc:homo:gen:1}
\end{align}
Recall that by the change of variable $T = \xi + \rab(1 - x_d)$ we may reformulate \eqref{eq:bous:drift:diff:stoch:sys:1} as
\begin{align}
  d T + \bfV \cdot \nabla T dt = \Delta Tdt + \sum_{k =1}^N \sigma_k dW^k, \quad T(0) = T_0  = \xi_0 + \rab(1 - x_d) \,,
    \label{eq:bous:drift:diff:stoch:sys:2}
\end{align}
where
\begin{align}
      \bfV_{|x_d = 0} = \bfV_{|x_d = 1}  = 0,\quad  T_{|x_d = 0} = \rab, \;  T_{|x_d = 1} = 0, \quad \bfV,
      T \textrm{ are periodic in }  (x_1, \ldots,x_{d-1}).
   \label{eq:bc:inhomo:gen}
\end{align}
Let $q \geq 2$ and consider any initial condition $\xi_0 \in H_2 \cap L^q(\DD)$ which is $\mathcal{F}_0$-measurable and
assume
\begin{align*}
   \E \exp( \eta^* \| \xi_0 \|_{L^q}^2) < \infty,
\end{align*}
for some $\eta^* > 0$.
We say $\xi$ is a solution of \eqref{eq:bous:drift:diff:stoch:sys:1} 
if it is weakly continuous, $\mathcal{F}_t$-adapted, and satisfies
\begin{align}
\label{eq:xi:reg}
\xi\in L^2(\Omega;L^2_{loc}([0,\infty); V_2) \cap L^{\infty}_{loc}([0,\infty);L^q)
\cap C([0,\infty); H_2)) \,,
\end{align}
where for $d = 3$ we further assume 
$q \geq 3$ or $q \geq 2$ and
$\bfV \in L^2(\Omega;L^2_{loc}([0,\infty); H^2(\mathcal{D}))$. 
Furthermore, the corresponding $T$ satisfies \eqref{eq:bous:drift:diff:stoch:sys:2} in weak sense (recall $\bfV$ is divergence free). 
Under these assumptions, there exists a unique solution of \eqref{eq:bous:drift:diff:stoch:sys:1} (see e.g. \cite{ZabczykDaPrato1992}).

\begin{Prop}
\label{prop:dde}
For any $2\leq p\leq q$, there exist constants $C_0=C_0(p)$ (with $C_0(2)=1$), $C=C(\mathcal{D},p)$ and $\gamma = \frac{C_0}{\|\sigma\|_{L^p}^2}$
 such that for each $K>0$, the solution $\xi$ of \eqref{eq:bous:drift:diff:stoch:sys:1} satisfies
\begin{multline}\label{eq:lppe-1}
\Prb \left(\left. \sup_{s\geq 0} \left( \frac{1}{2} \|\xi(s)\|_{L^p}^2
+ \frac{C_0(p)}{2} \int_0^s \|\xi\|_{L^p}^2 ds'
 -  2\|\xi_0\|_{L^p}^2  \right.\right.\right. \\
 \left.\left.\left.\vphantom{\int} -  ((p-1) \|\sigma\|_{L^p}^2 +
 C\rab^2) s \right) - C\rab^2 \geq K \, \right| \mathcal{F}_0 \right)
\leq e^{-\gamma K}\,.
\end{multline}
Furthermore, there exists $\eta_0 = \eta_0(\rab, p,\|\sigma\|_{L^p})$ such that for any $t \geq 0$, and any $\eta \leq \eta_0\wedge (\eta^*/4)$,
\begin{align}
    \E \exp \left( \eta \sup_{s \in [0, t]} \|\xi(s)\|_{L^p}^2 \right) \leq C' \E \exp \left( 4\eta  \|\xi_0\|_{L^p}^{2}\right) \,,
    \label{eq:gen:drift:diff:bnd:1}
\end{align}
for a constant $C' = C'( \rab,p,\|\sigma\|_{L^p},|\DD|,t)$ independent of $\eta$ and $\xi_0$.
Also, for any fixed $t>0$, letting $\psi (s) := \exp\left(C_0 (s - t)/p \right)$, we have
for each $K>0$,
\begin{multline} \label{eq:lppe-2}
\Prb \left(\left. \sup_{s \in [0, t]} \left( \frac{1}{2}\psi(s) \|\xi(s)\|_{L^p}^2
+ \frac{C_0}{2p} \int_0^s \psi(s') \|\xi\|_{L^p}^2 ds'
 -
2\psi(0) \|\xi(0)\|_{L^p}^2 \right.\right.\right. \\
\left. \left. \left.
\vphantom{\int} - \frac{p(p-1)}{C_0}  \|\sigma\|_{L^p}^2
- C \rab^2  \right) \geq K \, \right| \mathcal{F}_0 \right) \leq e^{-\gamma K} \,,
\end{multline}
and for
$\eta \leq \eta_0\wedge (\eta^*/4)$,
\begin{align}
  \E \exp \left( \eta \|\xi(t)\|^2_{L^p}  \right)
  \leq C''
  \E \exp \left( 4\eta (e^{- \kappa t} \|\xi_0\|^2)   \right) \,,
      \label{eq:gen:drift:diff:bnd:2}
\end{align}
where $C'' = C''( \rab,p, \|\sigma\|_{L^p}, |\DD|)$ and $\kappa = \kappa(\rab,p, |\DD|) > 0$ are independent of $t$, $\eta$ and $\xi_0$.
\label{prop:lets:make:the:most:of:the:moments:that:count}
\end{Prop}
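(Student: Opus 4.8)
The plan is to separate the genuinely stochastic part of $\xi$, which I will control by Itô calculus and exponential martingale inequalities, from a part governed by a pathwise maximum principle. The crucial observation is that the advection term must be kept in the auxiliary stochastic process. Concretely, let $Z$ solve the linear advection-diffusion equation driven by the same noise but with trivial data,
\begin{align*}
  dZ + \bfV\cdot\nabla Z\,dt = \Delta Z\,dt + \sum_{k=1}^N\sigma_k dW^k, \quad Z(0)=0,
\end{align*}
with $Z|_{x_d=0,1}=0$ and periodic in $(x_1,\ldots,x_{d-1})$. Setting $\zeta := \xi - Z$, the noise cancels and $\zeta$ solves the pathwise (random-coefficient) problem $\partial_t\zeta + \bfV\cdot\nabla\zeta = \rab v_d + \Delta\zeta$ with $\zeta(0)=\xi_0$ and homogeneous boundary conditions. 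The point of keeping the advection in $Z$ is that, upon undoing the change of variables $T=\xi+\rab(1-x_d)$ from \eqref{eq:bous:drift:diff:stoch:sys:2}, the function $\bar T := \zeta + \rab(1-x_d) = T - Z$ satisfies the \emph{source-free} advection-diffusion equation $\partial_t\bar T + \bfV\cdot\nabla\bar T = \Delta\bar T$ with the inhomogeneous data $\bar T|_{x_d=0}=\rab$, $\bar T|_{x_d=1}=0$ and $\bar T(0)=T_0$; the coupling term $\rab v_d$ has been absorbed into the boundary data. Had I used the plain stochastic heat convolution (no advection) for $Z$, the equation for $\bar T$ would carry the forcing $-\bfV\cdot\nabla Z$, which cannot be handled for merely $H_1$-regular $\bfV$.

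With this decomposition, $\bar T$ is controlled entirely deterministically (given $\bfV$) by the maximum principle, which for scalar advection-diffusion is insensitive to the divergence-free drift $\bfV$. I would split $\bar T = A + B$, where $A$ carries the initial data with zero boundary values and $B$ carries the boundary data with zero initial value. For $A$ the $L^p$ energy estimate closes since the transport term vanishes ($\nabla\cdot\bfV=0$ and $A|_\partial=0$) and the Poincar\'e inequality yields the decay $\|A(t)\|_{L^p}\le \|T_0\|_{L^p}e^{-ct}$; for $B$ the comparison principle gives $0\le B\le\rab$ pointwise, hence $\|B(t)\|_{L^p}\le C\rab$. Returning to $\zeta=\bar T - \rab(1-x_d)$ this produces the pathwise bound $\|\zeta(t)\|_{L^p}\le \|\xi_0\|_{L^p}e^{-ct}+C\rab$, which is exactly the source of the $\|\xi_0\|_{L^p}^2$ and $C\rab^2$ terms (and of the decay rate $\kappa$) in the statement.

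All probabilistic content then comes from $Z$. Applying the Itô formula to $\|Z\|_{L^p}^p$, the transport term again drops, the dissipation $-p(p-1)\int|Z|^{p-2}|\nabla Z|^2$ controls $-c\|Z\|_{L^p}^p$ by Poincar\'e, and the Itô correction contributes $\tfrac{p(p-1)}2\int|Z|^{p-2}\sum\sigma_k^2\le \tfrac{p(p-1)}2\|Z\|_{L^p}^{p-2}\|\sigma\|_{L^p}^2$. Passing to $\|Z\|_{L^p}^2=(\|Z\|_{L^p}^p)^{2/p}$ via Itô (the outer exponent $2/p\le1$ makes the second-order term favorable) yields a supermartingale inequality of the form $d\|Z\|_{L^p}^2\le(-C_0\|Z\|_{L^p}^2+(p-1)\|\sigma\|_{L^p}^2)\,dt+dN$, where $N$ is a martingale with $d\langle N\rangle\lesssim\|\sigma\|_{L^p}^2\|Z\|_{L^p}^2\,dt$. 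Reserving half of the dissipation to absorb $\tfrac\gamma2\langle N\rangle$ and applying the exponential martingale inequality $\Prb(\sup_s(N_s-\tfrac\gamma2\langle N\rangle_s)\ge K)\le e^{-\gamma K}$ with the critical choice $\gamma=C_0/\|\sigma\|_{L^p}^2$ gives the supermartingale bound \eqref{eq:lppe-1} and, after inserting the weight $\psi(s)=\exp(C_0(s-t)/p)$ (whose derivative $\psi'=\tfrac{C_0}p\psi$ combines with the dissipation to extract the decay), the weighted bound \eqref{eq:lppe-2}. Combining with the deterministic bound on $\zeta$ through $\|\xi\|_{L^p}^2\le 2\|\zeta\|_{L^p}^2+2\|Z\|_{L^p}^2$ yields the estimates for $\xi$; the exponential moment bounds \eqref{eq:gen:drift:diff:bnd:1}--\eqref{eq:gen:drift:diff:bnd:2} then follow by integrating the tail probabilities and applying Cauchy-Schwarz to split off the $\E\exp(4\eta\|\xi_0\|_{L^p}^2)$ factor, choosing $\eta<\gamma$.

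I expect the main obstacle to be the rigorous $L^p$ Itô calculus for $Z$ together with the bookkeeping of constants: one must justify the Itô formula for $\|Z\|_{L^p}^p$ (e.g. through the Galerkin approximations underlying Propositions \ref{prop:existence-uniqueness:2d}--\ref{prop:existence-uniqueness:3d}) and, above all, bound the martingale bracket by $\|\sigma\|_{L^p}^2\|Z\|_{L^p}^2$ precisely enough that a fixed fraction of the dissipation absorbs $\tfrac\gamma2\langle N\rangle$ at the critical value $\gamma=C_0/\|\sigma\|_{L^p}^2$. A secondary technical point is justifying the maximum/comparison principle for $\bar T$ when $\bfV$ has only the stated regularity, which is presumably why the statement imposes $q\ge 3$, or $q\ge 2$ with the extra regularity $\bfV\in L^2(\Omega;L^2_{loc}(H^2(\DD)))$, in dimension $d=3$.
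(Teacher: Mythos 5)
Your proposal is correct and rests on the same two pillars as the paper's proof --- an auxiliary noise-carrying advection--diffusion process whose remainder is source-free and hence amenable to the maximum principle, followed by $L^p$ It\^o calculus (with the concave map $x\mapsto(\delta+x)^{2/p}$ making the It\^o correction disposable) and the exponential martingale inequality at $\gamma=C_0/\|\sigma\|_{L^p}^2$ --- but the decomposition is arranged differently. The paper's auxiliary process $S$ solves your $Z$-equation with initial data $T_0$ rather than zero, and the maximum principle is applied only to the differences $S-(T-\rab)$ and $S-T$, whose initial and boundary data are constants; this yields the pointwise comparison $|\xi|\le|S|+2\rab$, so that \emph{all} of the $\|\xi_0\|_{L^p}$-dependence, including its exponential decay in \eqref{eq:lppe-2}--\eqref{eq:gen:drift:diff:bnd:2}, is extracted from the weighted It\^o/martingale estimate for $S$. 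You instead start $Z$ from zero and push the initial and boundary data into the pathwise remainder $\bar T=A+B$, obtaining the decay of the initial data from a deterministic $L^p$ energy estimate for $A$ and the $C\rab$ contribution from comparison for $B$. Your variant keeps the stochastic estimate cleanest ($Z(0)=0$) and makes the decay mechanism more transparent, at the cost of a second deterministic estimate and slightly worse numerical constants: chaining $\|\xi\|_{L^p}^2\le 2\|\zeta\|_{L^p}^2+2\|Z\|_{L^p}^2$ with $\|T_0\|_{L^p}^2\le 2\|\xi_0\|_{L^p}^2+C\rab^2$ will not literally reproduce the factor $2\|\xi_0\|_{L^p}^2$ in \eqref{eq:lppe-1} or $4\eta$ in \eqref{eq:gen:drift:diff:bnd:1}, only fixed multiples thereof, which is harmless for every use of the proposition in the paper. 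Two minor remarks: the Cauchy--Schwarz step you invoke for \eqref{eq:gen:drift:diff:bnd:1} is unnecessary, since the tail bounds are conditional on $\mathcal{F}_0$ and one simply integrates the conditional tail via $\E|X|=\int_0^\infty\Prb(|X|\ge y)\,dy$ before taking expectations; and your observation that the advection must be retained in the auxiliary process is precisely the point of the paper's construction of $S$.
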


\begin{proof}
In order to establish the desired bounds we take advantage of a comparison principle to work with a homogeneous version of
\eqref{eq:bous:drift:diff:stoch:sys:2}--\eqref{eq:bc:inhomo:gen} in place of \eqref{eq:bous:drift:diff:stoch:sys:1}--\eqref{eq:bc:homo:gen:1}.
Consider $S$, the solution to
\begin{align}
& dS + \bfV \cdot \nabla S dt = \Delta S dt + \sum \sigma_{k} d W^{k}, \quad S(0)= T_{0}, \notag \\
&S_{|x_d=0}= S_{|x_d=1} = 0, \quad S \ \text{periodic in}\ (x_1, \ldots,x_{d-1}).
\label{eq:S:def}
\end{align}
Note that, under the assumed conditions on the regularity of $\sigma$ and $\bfV$, the systems \eqref{eq:bous:drift:diff:stoch:sys:2} and \eqref{eq:S:def}
 possess unique pathwise solutions.
Then since $\tilde{T} = T - \rab$ solves
\begin{align*}
&d \tilde{T} + \bfV \cdot\nabla \tilde{T} dt = \Delta \tilde{T} dt + \sum\sigma_{k}dW^{k}, \quad \tilde{T}(0)=T_{0}-\rab, \\
&\tilde{T}_{|x_d=0}= 0, \; \tilde{T}_{|x_d=1} = -\rab, \quad \tilde{T} \ \text{periodic in}\ (x_1, \ldots,x_{d-1}),
\end{align*}
we have that $R=S-\tilde{T}$ is the unique solution of
\begin{align}
\label{eq:R}
&\pd{t} R + \bfV \cdot \nabla R = \Delta R, \quad R(0)= \rab > 0, \\
& R_{|x_d=0}= 0, \; R_{|x_d=1} = \rab, \quad R \ \text{periodic in}\ (x_1, \ldots,x_{d-1}).
\end{align}
Hence, by the maximum principle, we infer that $R\geq 0$ in $\DD\times[0,\infty)$.

Since $R\geq 0$ in $\DD\times[0,\infty)$, this gives $S\geq T - \rab$, so that
\begin{align}
\label{eq:Tbound1}
(T- \rab)_{+} \leq S_{+}.
\end{align}
On the other hand, $\tilde{R}=S - T$ solves
\begin{align*}
&\pd{t} \tilde{R} + \bfV \cdot \nabla \tilde{R} = \Delta \tilde{R}, \quad \tilde{R}(0)=0, \\
& \tilde{R}_{|z=0}= - \rab, \quad \tilde{R}_{|z=1} = 0, \quad \tilde{R} \ \text{periodic in}\ \mathbf{x} =  (x_1, x_2).
\end{align*}
Hence, again due to the maximum principle,
$\tilde{R}\leq 0$ in $\DD\times[0,\infty)$, which gives $T\geq S$ and we have
\begin{align}
\label{eq:Tbound2}
T_{-}\leq S_{-}.
\end{align}
Combining \eqref{eq:Tbound1} and \eqref{eq:Tbound2} yields
$|T|= T_{+}+T_{-}\leq (T- \rab)_{+}+T_{-}+ \rab \leq |S| + \rab$ so that
\begin{align}
|\xi |\leq |T|+\rab \leq |S|+2 \rab.
\label{eq:theta:S:bound}
\end{align}

With the bound \eqref{eq:theta:S:bound} in mind we proceed to estimate the solution $S$ of
\eqref{eq:S:def} as follows.  By the $L^p$ It\=o lemma (see \cite{Krylov2010}),
\begin{equation}
 d \| S\|_{L^p}^p - p \int_{\DD} \Delta S \cdot S |S|^{p-2} dx dt = \frac{p(p - 1)}{2} \sum_{k =1}^N \int_{\DD} \sigma_k^2 |S|^{p - 2} dx dt +
p \sum_{k =1}^N \int_{\DD} \sigma_k S |S|^{p - 2} dx dW_k \,,
\end{equation}
where we used that $\bfV$ is divergence free to drop the advective terms $\bfV \cdot \nabla S$. Since we seek to estimate exponential moments for
$\|S\|_{L^p}^2$ we next make a second application of It\={o}'s lemma with $\phi_p(x) := (\delta + x)^{2/p}$ for any $\delta > 0$, $p \geq 2$.
Direct computation yields
\begin{align*}
   \phi_p'(x) =  \frac{2}{p} (\delta + x)^{\frac{2-p}{p}}, \quad    \phi_p''(x) =  \frac{2(2-p)}{p^2} (\delta + x)^{\frac{2-2p}{p}}.
\end{align*}
Noting that $\phi''(x)$ is non-positive for every $x \geq 0$, we can drop the It\=o
correction term and infer
\begin{align}
\frac{1}{p} d \phi_p( \| S\|_{L^p}^p ) -
  \phi_p'(\| S\|_{L^p}^p) \int_{\DD} \Delta S \cdot S |S|^{p-2} dx dt &\leq
        \frac{(p - 1)}{2} \phi_p' (\| S\|_{L^p}^p) \sum_{k =1}^N \int_{\DD} \sigma_k^2 |S|^{p - 2} dx dt \notag \\
      &\quad +  \phi_p'(\| S\|_{L^p}^p)
       \sum_{k =1}^N \int_{\DD} \sigma_k S |S|^{p - 2} dx dW_k \,.
\label{eq:lp:ito}
\end{align}

We can integrate by parts (due to the zero Dirichlet boundary
condition in \eqref{eq:S:def}) and use a version of the Poincar\' e inequality (see \cite[Proposition 7.14.1]{KuksinShirikian12}) to obtain
\begin{align}
- \int_{\DD} \Delta S S |S|^{p-2} dx =  (p - 1) \int_{\DD} |S|^{p - 2} |\nabla S|^2 \, dx \geq
C_0 \int_{\DD} |S|^p \, dx = C_0\|S\|^p_{L^p}\,,
\label{eq:lp:1}
\end{align}
where $C_0 = C_0(p)$ with $C_0(2) = 1$.
On the other hand
\begin{align}
    \phi_p'(\| S\|_{L^p}^p) \sum_{k =1}^N \int_{\DD} \sigma_k^2 S^{p - 2} dx
  \leq    \phi_p'(\| S\|_{L^p}^p) \|\sigma\|_{L^p}^2 \|S\|^{p-2}_{L^p}
  \leq \frac{2}{p}  \|\sigma\|_{L^p}^2.
  \label{eq:Lp:ito:cor:est}
\end{align}
Finally note that the final term in \eqref{eq:lp:ito} is the differential of a (local) Martingale whose quadratic variation
is given by
\begin{align*}
   \int_0^t (\phi_p'(\| S\|_{L^p}^p))^2 \sum_{k =1}^N \left(\int_{\DD} \sigma_k S |S|^{p - 2} dx\right)^2 ds \,.
\end{align*}
We bound this term using H\" older's inequality
\begin{align}
  \int_0^t (\phi_p'(\| S\|_{L^p}^p))^2 \sum_{k =1}^N \left(\int_{\DD} \sigma_k S |S|^{p - 2} dx\right)^2 ds
  &\leq \int_0^t (\phi_p'(\| S\|_{L^p}^p))^2 \left(\|\sigma\|_{L^p}
\|S\|_{L^p}^{p-1}
  \right)^2 ds \notag \\
  &=  \frac{2}{p} \|\sigma\|_{L^p}^2
  \int_0^t \phi_p'(\| S\|_{L^p}^p)(\delta + \| S\|_{L^p}^p)^{\frac{2 - p}{p}}  \| S\|_{L^p}^{2(p-1)} ds \notag \\
  &\leq  \frac{2}{p}\|\sigma\|_{L^p}^2\int_0^t \phi_p'(\| S\|_{L^p}^p)\| S\|_{L^p}^p ds.
  \label{eq:Lp:quad:var:term}
\end{align}

We will now use the following estimate: for any continuous local martingale $M_t$ with $M_{0}=0$, for any $\gamma,K>0$,
\begin{align}
\label{eq:mart}
\Prb\left(\sup_{t\geq 0}\left(M_t -\frac{\gamma}{2}\langle M\rangle_{t} \right)\geq K\right) \leq e^{-\gamma K}.
\end{align}
In particular, we can combine the bounds \eqref{eq:lp:ito}--\eqref{eq:Lp:quad:var:term} with the exponential martingale estimate \eqref{eq:mart} for any
$\gamma \leq \frac{C_0}{\|\sigma\|_{L^p}^2}$ to infer, by passing to the limit as $\delta \rightarrow 0$,
\begin{align}
\label{eq:1}
\Prb \left(\left. \sup_{s \geq 0} \left(\|S(s)\|_{L^p}^2 +
C_0(p) \int_0^s \|S\|_{L^p}^2 ds'  -  \|S_0\|_{L^p}^2 - (p-1) s \|\sigma\|_{L^p}^2 \right) \geq K \, \right| \mathcal{F}_0 \right) \leq e^{-\gamma K} \,.
\end{align}
Note that from \eqref{eq:theta:S:bound} and Minkowski inequality, we have
\begin{align}
\label{eq:2}
\|\xi\|_{L^p}^2 \leq (\|S\|_{L^p} + 2\rab |\DD|^{1/p} )^2 \leq
2\|S\|_{L^p}^2 + C \rab^2
\end{align}
and
\begin{align}
\label{eq:3}
\|S_0\|_{L^p}^2 = \|T_0\|_{L^p}^2 = \|\xi_0 + \rab(1 - z)\|_{L^p}^2
\leq 2 \|\xi_0\|_{L^p}^2 + C \rab^2 \,,
\end{align}
where $C = C(\DD)$. Then \eqref{eq:lppe-1} follows by combining \eqref{eq:1}--\eqref{eq:3}.
Also the bound \eqref{eq:gen:drift:diff:bnd:1} is obtained from \eqref{eq:lppe-1} by invoking the elementary formula
\begin{align}
\label{eq:cake}
\E |X| = \int_{0}^\infty \Prb(|X| \geq y) \, dy.
\end{align}

In order to establish the remaining bounds, \eqref{eq:lppe-2}--\eqref{eq:gen:drift:diff:bnd:2}, we fix
$t > 0$ and set $\psi (s) := \exp\left(C_1 (s - t) \right)$ with
$C_1 = \frac{C_0}{p}$. Using It\=o formula for
$\psi(s) \phi_p( \|S(s)\|^p_{L^p})$
yields (cf. \eqref{eq:lp:ito})
\begin{multline}
\frac{1}{p} d (\psi \phi_p( \| S\|_{L^p}^p )) -
 \psi \phi_p'(\| S\|_{L^p}^p) \int_{\DD} \Delta S \cdot S |S|^{p-2} dx dt \leq
        \frac{(p - 1)}{2} \psi \phi_p'(\| S\|_{L^p}^p) \sum_{k =1}^N \int_{\DD} \sigma_k^2 |S|^{p - 2} dx dt \notag \\
      \quad + \frac{C_1}{p} \psi \phi_p(\| S\|_{L^p}^p) dt +
       \psi \phi_p'(\| S\|_{L^p}^p) \sum_{k =1}^N \int_{\DD} \sigma_k S |S|^{p - 2} dx dW_k \,.
\label{eq:lp:ito:2}
\end{multline}
Using $\psi \leq 1$, $\int_0^t \psi \leq 1$, and
the exponential martingale inequality \eqref{eq:mart}, we obtain as above
\begin{multline}
\Prb \left( \sup_{s \in [0, t]} \left( \psi(s) \phi_p(\|S(s)\|_{L^p}^p)
+  C_0 \int_0^s \psi(s') \Big( \phi'_p(\|S\|_{L^p}^p) \|S\|_{L^p}^p -
\frac{1}{p} \phi_p(\|S\|_{L^p}^p) \Big)
 ds'
\right. \right.
\\
\left. \left. \left.
\vphantom{\int}
-\psi(0) \phi_p( \|S(0)\|_{L^p}^p) - \frac{p(p-1)}{C_0} \|\sigma\|_{L^p}^2 \right) \geq K \, \right| \mathcal{F}_0 \right) \leq e^{-\gamma K} \,.
\end{multline}
By observing that $x\phi'_p(x) - \frac{1}{p}\phi(x) \to
\frac{1}{p} x^{\frac{2}{p}}$ as $\delta \to 0$, and changing $S$ to $\xi$, we obtain \eqref{eq:lppe-2} from the dominated convergence theorem.
Choosing $s = t$ and using $\psi(t) = 1$ and
\eqref{eq:cake} we obtain
\eqref{eq:gen:drift:diff:bnd:2}, completing the proof.
\end{proof}

\subsection{Bounds for the Boussinesq System}

In this section we establish bounds on solutions to
\eqref{eq:B:eqn:vel:2}--\eqref{eq:theta}.  These proofs will invoke Proposition \ref{prop:lets:make:the:most:of:the:moments:that:count}
and the exponential martingale inequality \eqref{eq:mart},
and will rely on carefully chosen weighted norms to deal with interacting terms in the system.

\begin{Prop} \label{lem:exp-bound}
Fix a stochastic basis $\mathcal{S} = (\Omega, \mathcal{F}, \Prb, \{\mathcal{F}_t\}_{t \geq 0}, W)$,
and let $U_0 = (\bfU_0, \Trs_0)\in H$ be $\mathcal{F}_0$-measurable initial conditions.
\begin{enumerate}[(i)]
\item When $d=2$, let $U = (\bfU, \Trs)$ denote the corresponding pathwise solution of \eqref{eq:B:eqn:vel:2}--\eqref{eq:theta}.
\item When $d=3$, assume $\E\exp(\eta^* \|\Trs_0\|_{L^3}^{2})<\infty$ for some $\eta^*>0$, and let $U= (\bfU, \Trs)$ denote
a corresponding martingale solution of \eqref{eq:B:eqn:vel:2}--\eqref{eq:theta} satisfying the conclusions
of Proposition \ref{prop:lets:make:the:most:of:the:moments:that:count} (with $q=3$).
\end{enumerate}
Let
\begin{align}
\gamma &:= \left\{\begin{array}{ll}
\min\left(\frac{1}{4\prN\|\tilde{\sigma}\|^2},\frac{1}{4}\right), \hspace{0.2in} \text{if}\
\|\tilde{\sigma}\|>0,\\
\frac{1}{4} \hspace{1.23in}
\text{if}\  \|\tilde{\sigma}\|=0.
\end{array}\right.
\end{align}
Then there exist constants $C_2, C_3$, depending on $\ra,\rab,\prN$, and $\|\tilde{\sigma}\|$,
and a random constant $C_1$ which further depends on $\|\bfU_0\|,\|\Trs_0\|,$ such that for each $K>0$,
\begin{multline}\label{eq:prb:est}
	\Prb  \left(\sup_{t\geq 0}\left(\|\bfU(t)\|^{2}
			 +  \frac{\prN}{2}\|\Trs(t)\|^2
                         +\frac{\prN}{2}  \int_{0}^{t}  \left(\|\nabla \bfU(s)\|^2 + \frac{1}{2}\|\nabla\Trs(s)\|^2  \right) ds
                         \right.\right. \\
                         \left.\left.\left.\vphantom{\int}- C_2t - C_1 \right)\geq C_3 K
                         \right|\mathcal{F}_0\right) \leq 3 e^{-\gamma K}.
\end{multline}
Moreover, if $\prN\geq 2$, then the constants $\eta_1,C_4,\kappa>0$ given by
\begin{align}
\eta_1 :&= \frac{1}{64\ra^2 + 8} \left\{\begin{array}{ll}\min\left(\frac{1}{2\prN\|\tilde{\sigma}\|^2},1\right), \hspace{0.2in} \text{if}\
\|\tilde{\sigma}\|>0,\\
1 \hspace{1.22in} \text{if} \ \|\tilde{\sigma}\|=0,
\end{array}\right. \\
  C_4 :&= 4\prN \|\tilde{\sigma}\|^2 + C(1 +  \ra^2 )(1+ \rab^2), \;\;  \kappa :=  C(1 + \ra^2),
  \label{eq:const:exp:decay:mom}
\end{align}
are such that for any $0<\eta < \eta_1$ and $t^* > 0$, we have
\begin{align}\label{eq:exp:est:2}
&\E  \exp \left(\eta  \|U(t^*)\|^{2}\right)
   \leq C\E \exp\left( \eta ( \kappa e^{ - t^*/2}\|U_0\|^2  + C_4) \right).
\end{align}
\end{Prop}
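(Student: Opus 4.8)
The plan is to exploit the fact that the temperature equation \eqref{eq:theta} is precisely the stochastic drift--diffusion equation \eqref{eq:bous:drift:diff:stoch:sys:1} with $\bfV = \bfU$ and $\xi = \Trs$. Hence Proposition~\ref{prop:dde} controls $\|\Trs\|$, together with its exponential moments, \emph{independently of the velocity field} --- this is the crucial point, since the two-way buoyancy coupling cannot be closed by dissipation alone: a Young's inequality applied to $\langle u_d,\Trs\rangle$ produces a factor proportional to $\ra$ or $\rab$, which for large Rayleigh number overwhelms the available gradient dissipation. The maximum-principle bound underlying Proposition~\ref{prop:dde} breaks this feedback. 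Once $\|\Trs(s)\|^2$ and $\int_0^s\|\Trs\|^2\,ds'$ are controlled via \eqref{eq:lppe-1} (with $p=2$, so $C_0(2)=1$) by the data and by time, the buoyancy term $\ra\hate\Trs$ in \eqref{eq:B:eqn:vel:2} may be treated as an external forcing.

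For the velocity, I would apply It\=o's lemma to $\|\bfU\|^2$. The advective and pressure terms drop (since $\bfU$ is divergence free with zero Dirichlet data), giving schematically
\begin{align*}
  d\|\bfU\|^2 + 2\prN\|\nabla\bfU\|^2\,dt
   = \left(2\prN\ra\langle u_d,\Trs\rangle + \prN^2\|\tilde{\sigma}\|^2\right)dt
    + 2\prN\Big\langle \bfU, \sum_{k}\tilde{\sigma}_k\,d\tilde{W}^k\Big\rangle.
\end{align*}
Using $\|u_d\|\le\|\nabla\bfU\|$ (Poincar\'e) and Young's inequality, the buoyancy term is bounded by $\prN\|\nabla\bfU\|^2 + C\prN\ra^2\|\Trs\|^2$, so half the dissipation survives while $\|\Trs\|^2$ is already controlled by Proposition~\ref{prop:dde}. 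To recover the $\tfrac{\prN}{4}\int_0^t\|\nabla\Trs\|^2$ appearing in \eqref{eq:prb:est}, I would run a separate It\=o balance for $\|\Trs\|^2$, solve it for $\int_0^t\|\nabla\Trs\|^2$, and split the resulting coupling $\rab\langle\Trs,u_d\rangle$ between a small multiple of $\|\nabla\bfU\|^2$ (absorbed into the surviving velocity dissipation) and $C\rab^2\|\Trs\|^2$ (again controlled by Proposition~\ref{prop:dde}). The weighted combination $\|\bfU\|^2 + \tfrac{\prN}{2}\|\Trs\|^2 + \tfrac{\prN}{2}\int_0^t(\|\nabla\bfU\|^2 + \tfrac12\|\nabla\Trs\|^2)\,ds$ then isolates exactly the quantity in \eqref{eq:prb:est}, with $C_2$, $C_1$, $C_3$ collecting respectively the constant-in-time forcing, the initial data, and the normalization of $K$.

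For the supremum-in-time tail bound I would invoke the exponential martingale inequality \eqref{eq:mart}. The key computation is the quadratic variation of the velocity martingale, bounded by $4\prN^2\|\tilde{\sigma}\|^2\int_0^t\|\bfU\|^2\,ds$; hence $\tfrac{\gamma}{2}\langle M\rangle$ is absorbed by the remaining dissipation precisely when $\gamma\le \tfrac{1}{4\prN\|\tilde{\sigma}\|^2}$, which explains the stated form of $\gamma$ (the value $\tfrac14$ when $\|\tilde{\sigma}\|=0$ coming from the temperature martingale alone, whose quadratic variation is normalized by $\|\sigma\|^2=1$). Combining the temperature supremum estimate \eqref{eq:lppe-1}, the velocity martingale event, and the temperature martingale event by a union bound produces the factor $3e^{-\gamma K}$ in \eqref{eq:prb:est}.

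Finally, the exponential moment bound \eqref{eq:exp:est:2} follows by feeding the tail bounds into the layer-cake formula \eqref{eq:cake}. For the decay of the initial data I would keep $\prN\|\bfU\|^2$ (via $\|\nabla\bfU\|\ge\|\bfU\|$) in the velocity balance to obtain exponential decay of the velocity energy at a rate controlled by $\prN$, while the temperature contribution decays at rate $\tfrac12$ by \eqref{eq:gen:drift:diff:bnd:2} (its decay rate being $C_0/p=\tfrac12$ at $p=2$); the slower of the two yields the factor $e^{-\tstar/2}$, and the hypothesis $\prN\ge2$ guarantees the velocity decays at least this fast. Careful bookkeeping of the constants then identifies $\eta_1$, $C_4$ and $\kappa$. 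The main obstacle throughout is not any single estimate but the coupling: one must recognize that it cannot be absorbed purely by dissipation and instead route it through the $\bfU$-independent temperature bounds, all while tracking the weighted quadratic variations so that the constraints defining $\gamma$ and $\eta_1$ are met. In $d=3$ these formal manipulations are justified on the Galerkin solutions underlying the martingale solution of Proposition~\ref{prop:existence-uniqueness:3d}, which is why the $L^3$-control of $\Trs_0$ (i.e. $q=3$ in Proposition~\ref{prop:dde}) is assumed.
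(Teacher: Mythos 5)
Your proposal is correct and follows essentially the same route as the paper: It\^o balances for $\|\bfU\|^2$ and $\|\Trs\|^2$ combined with the $\bfU$-independent control of $\|\Trs\|$ from Proposition~\ref{prop:dde}, three exponential-martingale events merged by a union bound (hence the factor $3e^{-\gamma K}$ and the stated constraints on $\gamma$), and time-weighted versions plus the layer-cake formula for \eqref{eq:exp:est:2}, with $\prN\ge 2$ matching the velocity and temperature decay rates exactly as in the paper's $\phi^{1/2}\le\psi$ step. The only differences are minor bookkeeping choices in how the $\rab\langle\Trs,u_d\rangle$ term is split, which do not affect the argument.
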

\begin{Rmk}
We emphasize that when $\tilde{\sigma} = 0$ (i.e. no stochastic forcing in the velocity component) the constants $\gamma$, $\eta_1$, $C_4$ and $\kappa$
 are \textit{independent} of $\prN$.
\end{Rmk}
\begin{proof}
Using \eqref{eq:B:eqn:vel:2} we compute with the  It\={o} formula (note that this is an inequality for $d=3$)
\begin{align}
\label{eq:u:ito}
d\|\bfU\|^{2} + 2\prN\|\nabla \bfU\|^{2}dt
\leq \left(2\prN\ra\langle \Trs,u_d\rangle  + (\prN)^{2}\|\tilde{\sigma}\|^{2}\right)dt + 2\prN\sum\langle \tilde{\sigma}_k,\bfU\rangle d\tilde{W}^{k}.
\end{align}
The last term in the previous line is a Martingale with quadratic variation
$4(\prN)^{2}\int_{0}^{t}\sum\langle \tilde{\sigma}_{k},\bfU\rangle^{2}ds$.  Then since
\begin{align*}
\sum\langle \tilde{\sigma}_{k},\bfU\rangle^{2} \leq \|\tilde{\sigma}\|^{2}\|\bfU\|^{2},
\end{align*}
by using the Poincar\'{e} inequality, one has
\begin{align*}
&d\|\bfU\|^{2} + \left(\prN\|\nabla \bfU\|^{2}-\prN(\ra)^2\|\Trs\|^2
-(\prN)^{2}\|\tilde{\sigma}\|^{2}-2\gamma(\prN)^{2}
\|\tilde{\sigma}\|^{2}\|\bfU\|^{2}\right)dt \\
&\quad \quad \quad \quad \leq  -2\gamma(\prN)^{2}\sum\langle \tilde{\sigma}_{k},\bfU\rangle^{2}dt
+ 2\prN\sum\langle \tilde{\sigma}_k,\bfU\rangle d\tilde{W}^{k}.
\end{align*}
Applying \eqref{eq:mart} with $\gamma \leq 1/(4\prN\|\tilde{\sigma}\|^2)$, we have for any $K>0$, that $\Prb (E_{1, K}) \leq e^{-\gamma K}$ with
\begin{align}\label{eq:E1}
\begin{split}
&E_{1, K} := \left\{\sup_{t\geq 0}\left(\|\bfU(t)\|^{2}
+\int_{0}^{t}\left(\frac{\prN}{2}\|\nabla \bfU(s)\|^2-\prN(\ra)^2\|\Trs(s)\|^{2}\right)ds  \right.\right.
\\
& \hspace{3in} \left.\left.\vphantom{\int_0^t}
-(\prN)^{2}\|\tilde{\sigma}\|^{2}t -\|\bfU_0\|^{2}\right)\geq K\right\} .
\end{split}
\end{align}
Note that when $\|\tilde{\sigma}\|=0$, this holds for any $\gamma>0$.
Next,
since $\Trs$ satisfies a drift diffusion equation of the form \eqref{eq:bous:drift:diff:stoch:sys:1},
we obtain from \eqref{eq:lppe-1} that for any $\gamma \leq 1/4$ one has
$\Prb (E_{2, K}) \leq e^{-\gamma K}$, with
\begin{align}\label{eq:E2}
E_{2, K} := \left\{\sup_{t\geq 0}\left(
\|\Trs(t)\|^{2}+\int_{0}^{t}\|\Trs(s)\|^2 ds - (C\rab^2 +  2)t - (C\rab^2+4\|\Trs_0\|^2)
\right)
\geq K\right\}  \,,
\end{align}
where we used $C_0(2) = 1$ and $\|\sigma\| = 1$.
Note that from \eqref{eq:theta} one also has
\begin{align}\label{eq:theta:ito}
d\|\Trs\|^{2} + 2 \|\nabla\Trs\|^{2}dt \leq (2 \rab\langle \Trs,u_d\rangle  + 1)dt +
2 \sum\langle \sigma_k,\Trs\rangle dW^{k},
\end{align}
and, with $\gamma \leq 1/4$, another application of \eqref{eq:mart} gives
$\Prb (E_{3, K}) \leq e^{-\gamma K}$, where
\begin{align}
\label{eq:E3}
E_{3, K} :=
\left\{\sup_{t\geq 0}\left(
\|\Trs(t)\|^2 + \int_{0}^{t} \left(\frac{1}{2}\|\nabla\Trs(s)\|^2
- \rab^{2}\|u(s)\|^2\right) ds -  t -\|\Trs_0\|^2
\right)
\geq K\right\}.
\end{align}
Define $E_{K} := \cup_{j=1}^{3} E_{j,K}$.  Then we have
$\Prb(E_{K} ) \leq 3e^{-\gamma K}$.
Furthermore, on the complement of $E_{K}$,
we evaluate $(1+\rab^2)E_{1, K} +  (1+\rab^2)\prN \ra^2 E_{2, K} +  \frac{\prN}{2}E_{3, K}$,
meaning that we are adding the inequalities defining $E_{i, K}$, and using
$\|\nabla u\| \leq \|u\|$ we obtain
for any $t\geq 0$,
\begin{align*}
 &\|\bfU(t)\|^{2} + \frac{\prN}{2}\|\Trs(t)\|^2
 +\frac{\prN}{2}\int_{0}^{t}\left(\|\nabla \bfU(s)\|^2 + \frac{1}{2}\|\nabla \Trs(s)\|^2\right) ds  \\
& \leq
 \left((1+\rab^2)\left(\prN^2\|\tilde{\sigma}\|^{2} + \ra^2\prN(C\rab^2+2)\right) + \frac{\prN}{2} \right)t
\\ & \quad \quad + (1+\rab^2)\left(\|\bfU_0\|^{2} + \ra^2\prN(C\rab^2+4\|\Trs_0\|^{2})\right) + \frac{\prN}{2}\|\Trs_0\|^{2}
 + \left((1+\rab^2)( \prN\ra^{2}+1) + \frac{\prN}{2}\right)K.
\end{align*}
By defining $C_1,C_2$ and $C_3$ as
\begin{align}
C_1 &:= (1+\rab^2)\left(\|\bfU_0\|^{2} + \ra^2\prN(C\rab^2+4\|\Trs_0\|^{2})\right) + \frac{\prN}{2}\|\Trs_0\|^{2}\,,
\label{eq:def-c1}\\
C_2 &:= (1+\rab^2)\left(\prN^2\|\tilde{\sigma}\|^{2} + \ra^2\prN(C\rab^2+2)\right) + \frac{\prN}{2}  \,,
\label{eq:def-c2}\\
C_3 &:= (1+\rab^2)( \prN\ra^{2}+1) + \frac{\prN}{2}
\label{eq:def-c3} \,,
\end{align}
the proof of \eqref{eq:prb:est} is complete.

It remains to prove \eqref{eq:exp:est:2}.  Fix $t^*>0$ and define $\phi(t) := \exp(\frac{\prN}{2} (t - t^*))$. It follows from
\eqref{eq:u:ito} that
\begin{align*}
   d (\phi \| \bfU \|^2) + 2\phi \prN \| \nabla \bfU\|^2 dt
    =  \phi \left(2\prN\ra\langle \Trs,u_2\rangle  + \frac{\prN}{2} \|\bfU\|^2 +
     (\prN)^{2}\|\tilde{\sigma}\|^{2}\right)dt + 2\prN \phi \sum\langle \tilde{\sigma}_k,\bfU\rangle d\tilde{W}^{k}.
\end{align*}
Rearranging and applying standard estimates as above we find
\begin{align*}
   d (\phi \| \bfU \|^2) +& \phi \left(\frac{\prN}{2} \| \nabla \bfU\|^2 - 2\prN\ra^2\|\Trs\|^2
   - 2\delta \prN^2 \phi \|\tilde{\sigma}\|^{2}\|\bfU\|^{2} - (\prN)^2\|\tilde{\sigma}\|^{2} \right)dt \\
    &\quad \qquad  \leq 2\prN \phi \sum\langle \tilde{\sigma}_k,\bfU\rangle d\tilde{W}^{k}
    - 2\delta \prN^2 \phi^2 \sum\langle \tilde{\sigma}_k,\bfU\rangle^2 dt.
\end{align*}
Set $\delta := \frac{1}{4 \prN \| \tilde{\sigma}\|^2}$
if $\|\tilde{\sigma}\| > 0$, or $\delta := 1$ if $\|\tilde{\sigma}\| = 0$.
Using that $\phi \leq 1$,
and  $\int_0^{t^*} \phi dt < \frac{2}{\prN}$ we obtain that
$\Prb (\tilde{E}_{1,K}) \leq e^{- \delta K} $, where
\begin{align*}
  \tilde{E}_{1,K} := \left\{  \|\bfU(t^*)\|^2 -
  2 \prN \ra^2 \int_0^{t^*} \! \! \phi(t)   \|\Trs\|^2  dt
  - 2\prN \| \tilde{\sigma} \|^2 - e^{- \prN t^*/2} \|\bfU_0\|^2 \geq K \right\}\,.
\end{align*}
Since $\Trs$ satisfies a drift-diffusion equation of the form \eqref{eq:bous:drift:diff:stoch:sys:1}, we obtain from \eqref{eq:lppe-2}
that $\Prb (\tilde{E}_{2,K}) \leq e^{- K/2}$, where $\psi(t) = \exp((t - t^*)/2)$
and
\begin{align*}
  \tilde{E}_{2,K} &:=
\left\{ \sup_{t \in [0,t^*]} \left( \frac{\psi(t)}{2} \|\Trs(t)\|^2 + \frac{1}{2} \int_0^t    \psi \|\Trs\|^2 ds - 2e^{- t^*/2} \|\Trs_0\|^2
- 1 - C \rab^2
\right) \geq K \right\} \\
 &\supseteq  \left\{ \frac{1}{2} \|\Trs(t^*)\|^2 +
 \int_0^{t^*}   \frac{\psi(s)}{2} \|\Trs\|^2 ds
   - 2e^{- t^*/2} \|\Trs_0\|^2 - 1 - C\rab^2 \geq K \right\}\,.
\end{align*}
We now define $\tilde{E}_K = \tilde{E}_{1,K} \cup \tilde{E}_{2,K}$ and infer that, as above
$\Prb(\tilde{E}_K) \leq 2 e^{- \gamma K}$ where $\gamma = \delta \wedge \frac{1}{2}$ is independent of $\prN$ if $\tilde{\sigma} = 0$.
Now on $\tilde{E}_K^c$ we estimate
\begin{align*}
   \|\bfU(t^*)\|^2& + \frac{1}{2} \|\Trs(t^*)\|^2 \\
   &\leq 2 \prN \ra^2 \int_0^{t^*} \phi(t) \|\Trs\|^2 dt + 2\prN \|\tilde{\sigma}\|^2 + e^{- \prN t^*/2 } \|\bfU_0\|^2 + 2 e^{-t^*/2} \|\Trs_0\|^2
   + 2 + C\rab^2 + 2K \,.
\end{align*}
Observing that $\phi^{1/2} \leq \psi$ for $\prN \geq 2$ we obtain on $\tilde{E}_K^c$,
\begin{align*}
\int_0^{t^*}  \phi \|\Trs\|^2 dt &\leq
\int_0^{t^*} \phi^{1/2} \psi \|\Trs\|^2 dt
\leq  \Big( 4e^{- t^*/2} \|\Trs_0\|^2 + 2 + C\rab^2 + 2K \Big)
\int_0^{t^*} \phi^{1/2} dt \\
&\leq \frac{1}{\prN} \left( C\left(e^{- t^*/2} \|\Trs_0\|^2 + 1 + \rab^2\right) + 8K\right).
\end{align*}
Noting the definitions of $C_4,\kappa$ above, we conclude that, for any $t^* > 0$,
\begin{align*}
  \Prb \left( \|U(t^*)\|^2 - C_4 - \kappa e^{- t^*/2} \| U_0\|^2  > (32\ra^2 + 4)K \right) \leq 2 e^{- \gamma K}.
\end{align*}
By using the formula \eqref{eq:cake} we can infer \eqref{eq:exp:est:2} for any $\eta \leq  \gamma/(32\ra^2+4) = \eta_1$, and the proof is complete.
\end{proof}

\subsection{Bounds for the Infinite Prandtl System}

Next we establish estimates analogous to those in Proposition~\ref{lem:exp-bound}
for the infinite Prandlt system \eqref{eq:vel:inf}--\eqref{eq:theta:inf}.

\begin{Prop}
There exist constants $\tilde{C}_1, \tilde{C}_2, \tilde{C}_3 > 0$ depending on $\ra, \rab$ and $\|\Tho_0\|$
such that for each $\gamma \leq 1/4$, and any $K>0$,
the solution $\Tho$ to \eqref{eq:vel:inf}--\eqref{eq:theta:inf} with initial data $\Tho_0$ satisfies
\begin{align}
\label{eq:bd:theta}
\Prb\left(\sup_{t\geq 0}\left(\|\Tho(t)\|^2
+\frac{1}{2}\int_{0}^{t}\|\nabla \Tho(s)\|^2 ds -  \tilde{C}_1-\tilde{C}_2t \right)\geq \tilde{C}_3K \right) &\leq 2 e^{-\gamma K}.
\end{align}
Furthermore, there exists $C,\tilde{C}>0$, $\eta_0=\eta_0(\ra,\rab)>0$ and $\tilde{C}_4=\tilde{C}_4(\ra,\rab)>0$ such that for any $t^*>0$ and $0<\eta<\eta_0$,
\begin{align}
\E\left(\exp\left(\eta\|\Tho\|^{2} + \frac{\eta e^{-t^{*}/4}}{2}\int_{0}^{t^{*}}\|\nabla\Tho\|^{2}ds\right)\right)
\leq C \exp\left(\tilde{C}\eta (1+\ra\rab)e^{-t^{*}/4}\|\Tho_{0}\|^{2}+\tilde{C}_4\right).
\label{eq:exp:theta}
\end{align}
\label{lem:exp-bound-pr}
\end{Prop}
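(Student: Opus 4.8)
The plan is to regard $\Tho$ as a solution of the drift–diffusion equation \eqref{eq:bous:drift:diff:stoch:sys:1} whose advecting field $\bfV=\Uo$ happens to be enslaved to $\Tho$ through the Stokes problem \eqref{eq:vel:inf}, and to exploit the fact that the bounds of Proposition \ref{prop:dde} hold uniformly over all divergence-free $\bfV$ of the required regularity. These bounds come from the maximum-principle comparison \eqref{eq:theta:S:bound}, which is completely insensitive to the feedback $\Uo\leftrightarrow\Tho$, so they give $L^2$-in-time control of $\Tho$ \emph{for free}. The coupling will enter only through a standard energy balance, which supplies the dissipation $\int_0^t\|\nabla\Tho\|^2\,ds$ but reintroduces a buoyancy source; my strategy is to estimate that source using the velocity-free bounds of Proposition \ref{prop:dde} rather than to absorb it.

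For \eqref{eq:bd:theta} I would first apply the It\^o formula to $\|\Tho\|^2$ to obtain
\begin{align*}
 d\|\Tho\|^2 + 2\|\nabla \Tho\|^2\, dt = \big(2\rab\langle \Tho, u^0_d\rangle + \|\sigma\|^2\big)\,dt + 2\sum_k \langle \sigma_k, \Tho\rangle\,dW^k .
\end{align*}
Testing \eqref{eq:vel:inf} against $\Uo$ (the pressure drops since $\Uo$ is divergence free) gives the identity $\|\nabla\Uo\|^2=\ra\langle\Tho,u^0_d\rangle$, whence by Poincar\'e $\|\nabla\Uo\|\le \ra\|\Tho\|$ and hence $0\le 2\rab\langle\Tho,u^0_d\rangle\le 2\ra\rab\|\Tho\|^2$. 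The point is that this source cannot be absorbed by $2\|\nabla\Tho\|^2$ for large $\ra\rab$, so instead I bound $\int_0^t\|\Tho\|^2\,ds$ directly from \eqref{eq:lppe-1} (with $p=2$, $\|\sigma\|=1$), which is available precisely because $\Tho$ solves \eqref{eq:bous:drift:diff:stoch:sys:1}. Controlling the stochastic integral with the exponential martingale inequality \eqref{eq:mart} (its quadratic variation is $\le 4\int_0^t\|\Tho\|^2\,ds$) on one good event, and invoking \eqref{eq:lppe-1} on a second, a union bound over the two events together with a choice of $\tilde C_1,\tilde C_2,\tilde C_3$ collecting the factors $\ra\rab$ and $\rab^2$ yields \eqref{eq:bd:theta} with the prefactor $2e^{-\gamma K}$.

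For the exponential bound \eqref{eq:exp:theta} I would run the same balance against the time weight $\phi(t):=\exp\big((t-t^*)/4\big)$. Applying It\^o to $\phi\|\Tho\|^2$ and using $\phi'=\tfrac14\phi$, $\phi(t^*)=1$, $\phi(0)=e^{-t^*/4}$, $\int_0^{t^*}\phi\,dt\le 4$, together with $\phi(s)\ge e^{-t^*/4}$ on $[0,t^*]$, produces on the left exactly $\|\Tho(t^*)\|^2+2e^{-t^*/4}\int_0^{t^*}\|\nabla\Tho\|^2\,ds$, and on the right $e^{-t^*/4}\|\Tho_0\|^2$, a bounded constant, the weighted source $(\tfrac14+2\ra\rab)\int_0^{t^*}\phi\|\Tho\|^2\,ds$, and a weighted martingale of quadratic variation $\le 4\int_0^{t^*}\phi\|\Tho\|^2\,ds$. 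To close I need an upper bound on $\int_0^{t^*}\phi(s)\|\Tho\|^2\,ds$ that decays like $e^{-t^*/4}\|\Tho_0\|^2$; this is obtained from the weighted $L^2$ estimate of Proposition \ref{prop:dde}, re-run with the slower weight $\exp\big((s-t^*)/4\big)$ in place of $\exp\big((s-t^*)/2\big)$ — admissible since the growth rate $1/4$ lies below the Poincar\'e dissipation rate and is still absorbed in \eqref{eq:lp:1} — combined with $\|\Tho\|^2\le 2\|S\|^2+C\rab^2$ from \eqref{eq:theta:S:bound}. Assembling the weighted energy balance, the martingale inequality \eqref{eq:mart}, and this weighted $L^2$ bound into a single tail estimate, and converting it to an exponential moment through \eqref{eq:cake} applied to $e^{\eta(\cdot)}$ for $\eta<\eta_0$ small, gives \eqref{eq:exp:theta}; the coefficient $(1+\ra\rab)$ on the initial datum arises from the direct $\phi(0)$ term together with the $\ra\rab$ from the source.

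The main obstacle is exactly the buoyancy source $2\rab\langle\Tho,u^0_d\rangle$: it has a fixed positive sign and scales like $\ra\rab\|\Tho\|^2$, so it overwhelms the thermal dissipation for large Rayleigh numbers and cannot be handled by a naive Gronwall/absorption argument. The resolution — and the one genuinely nontrivial idea — is that the maximum-principle comparison underlying Proposition \ref{prop:dde} furnishes $L^2$-in-time control of $\Tho$ that is entirely independent of the advecting velocity, so the source can be \emph{estimated} rather than absorbed. The remaining delicate bookkeeping is to match the weight $e^{(t-t^*)/4}$ in the energy balance with a correspondingly weighted $L^2$ bound, so that the linear-in-$t^*$ growth of $\int\|\Tho\|^2$ is converted into the $t^*$-independent constant $\tilde C_4$ while the initial-data dependence retains the sharp factor $e^{-t^*/4}$.
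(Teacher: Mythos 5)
Your proposal follows essentially the same route as the paper's proof: the buoyancy source is reduced via the Stokes energy identity $\|\nabla\Uo\|^2=\ra\langle\Tho,u^0_d\rangle\le\ra^2\|\Tho\|^2$ and then \emph{estimated} (not absorbed) using the velocity-independent drift–diffusion bounds \eqref{eq:lppe-1} and \eqref{eq:lppe-2} of Proposition \ref{prop:dde}, with a union over two good events giving the prefactor $2e^{-\gamma K}$, the weight $e^{(t-t^*)/4}$ for the second bound, and \eqref{eq:cake} to pass to exponential moments. This matches the paper's argument in both structure and the key idea, up to trivial rearrangements of where the Cauchy–Schwarz/Poincar\'e steps are performed.
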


\begin{proof}
The proof of \eqref{eq:bd:theta} is a modification of the proof of Proposition \ref{lem:exp-bound}.  First observe that from \eqref{eq:vel:inf}, the Poincar\'{e} inequality yields
\begin{align}
\|\nabla \Uo\|^2 = \ra\langle u_d^{0},\Tho\rangle \leq \frac{1}{2}\|\nabla\Uo\|^2 + \frac{\ra^{2}}{2}\|\Tho\|^2,
\label{eq:inf:u:id}
\end{align}
and therefore
\begin{align}
\|\Uo\| \leq \|\nabla \Uo\| \leq \ra\|\Tho\|.
\label{eq:inf:u:bound}
\end{align}
Next, we consider the sets $E_{2,K}$ and $E_{3,K}$ as defined in \eqref{eq:E2} and \eqref{eq:E3}, respectively.
Define $A_K = \cup_{j=2,3} E_{j,K}$, and note that by the reasoning from the proof of Proposition \ref{lem:exp-bound} above, for $\gamma \leq 1/4$ we have $\Prb(A_K)\leq 2 e^{-\gamma K}$.  Furthermore, on the complement of $A_K$, for any $t\geq 0$, we can use \eqref{eq:inf:u:bound} to find
\begin{align*}
 \|\Tho(t)\|^2
 +&\frac{1}{2}\int_{0}^{t}\|\nabla \Tho(s)\|^2 ds
 \leq  \|\Tho_0\|^{2} + \rab^{2}\int_{0}^{t}\|\Uo(s)\|^{2}ds + t + K
\\
& \leq \|\Tho_0\|^{2} + \rab^{2}\ra^2\int_{0}^{t}\|\Tho(s)\|^{2}ds  + t + K \\
& \leq  C(\rab^2 \ra^2 + 1)\left(\|\Tho_0\|^{2} +\rab^{2} + \left(\rab^2 + 1\right)t + K\right).
\end{align*}
By defining $\tilde{C}_1, \tilde{C}_2$ and $\tilde{C}_3$ in the obvious way,
we obtain \eqref{eq:bd:theta}.

It remains to establish \eqref{eq:exp:theta}.
Define $\psi(t) = \exp((t - t^{*})/4)$, and note that from \eqref{eq:theta} (and $\|\sigma\|=1$) one has
\begin{align}
\label{eq:theta:ito:2}
d(\psi\|\Tho\|^{2}) +  &\psi\left(\|\nabla\Tho\|^{2}-2\rab \langle \Tho,u^{0}_d\rangle
- 2\psi\gamma\|\Tho\|^{2} - 1\right)dt  \notag \\ &\leq
2 \psi\sum\langle \tilde{ \sigma}_k,\Tho\rangle d\tilde{W}^{k} - 2\psi^2\gamma\sum\langle \tilde{ \sigma}_k,\Tho\rangle^{2}dt.
\end{align}
Observe that by combining \eqref{eq:inf:u:id} and \eqref{eq:inf:u:bound} we have
\begin{align}
\label{eq:inf:1}
\langle \Tho,u^{0}_d\rangle = \frac{1}{\ra}\|\nabla \Uo\|^{2} \leq \ra\|\Tho\|^2.
\end{align}
Then, for $\gamma = 1/4$, the exponential
martingale inequality \eqref{eq:mart} and \eqref{eq:inf:1} give that
$\Prb(F_{1,K}) \leq e^{-K/4}$, where
\begin{align}
F_{1,K}:=\left\{\|\Tho(t^{*})\|^2 + \int_{0}^{t^{*}} \psi(s)\left(\frac{1}{2}\|\nabla\Tho(s)\|^2
- 2\ra\rab\|\Tho(s)\|^2\right) ds - 4 -e^{-\frac{t^{*}}{4}}\|\Tho_0\|^2 \geq K\right\}\,.
\end{align}
Also define $F_{2,K}$ as $\tilde{E}_{2, K}$ in the proof of Proposition
\ref{lem:exp-bound} with $\Trs$ replaced by $\Tho$, and with the modified definition of $\psi(t)$ given in this proof.  As above we obtain
$\Prb (F_{2, K}) \leq e^{-K/4}$, and find that on $(F_{1,K}\cup F_{2,K})^{c}$ we have
\begin{align*}
\|\Tho(t^{*})\|^2 + \frac{e^{-\frac{t^{*}}{4}}}{2}\int_{0}^{t^{*}}\|\nabla\Tho(s)\|^2 \,ds
&\leq
 \ra\rab\int_{0}^{t^{*}}\psi(s)\|\Tho(s)\|^2 ds + 1 + e^{-\frac{t^{*}}{4}}\|\Tho_0\|^2 + K
 \\
&\leq
 C(\ra\rab + 1) (e^{-\frac{t^{*}}{4}}\|\Tho_0\|^2 + 1 + \rab^2 + K)
\,.
\end{align*}
That is, with
 $\tilde{C}_4 = C( \ra\rab +1)(1 + \rab^2)$, we conclude that
\begin{align*}
\Prb\left(\|\Tho(t^{*})\|^2 + \frac{e^{-\frac{t^{*}}{4}}}{2}\int_{0}^{t^{*}}\|\nabla\Tho(s)\|^2 \,ds -
 \tilde{C}_4 - C( \ra\rab +1)e^{-\frac{t^{*}}{4}}\|\Tho_0\|^2 \geq  C( \ra\rab +1) K \right) \leq 2e^{-K/4}.
\end{align*}
By using the formula \eqref{eq:cake} the inequality \eqref{eq:exp:theta} follows, completing the proof.
\end{proof}

\section{Existence of Invariant States}
\label{sec:ex}

In this section we apply the Krylov-Bogoliubov averaging procedure \cite{KryloffBogoliouboff1937}
to establish the existence of ergodic invariant measures \eqref{eq:B:eqn:vel:2}--\eqref{eq:theta} when $d=2$, and
for the infinite Prandtl system \eqref{eq:vel:inf}--\eqref{eq:theta:inf}
when $d=2$ or $d=3$.
We also prove the existence of a statistically invariant state for \eqref{eq:B:eqn:vel:2}--\eqref{eq:theta} when $d=3$
 by adapting the method of \cite{FlandoliGatarek1}.  That is, we provide the proof of Theorem \ref{thm:1b} at the end of this section.

\subsection{Invariant Measures}

\begin{Prop}
\label{prop:invar:meas:existence}
When $d =2$ there exists an ergodic invariant measure $\mu \in Pr(H)$ for the Markov semigroup $P_t$
corresponding to \eqref{eq:B:eqn:vel:2}--\eqref{eq:theta}.  In other words,
$\mu P_t = \mu$ for every $t \geq 0$ and if $P_t \chi_A = \chi_A$, $\mu$-almost surely then $\mu(A) \in \{0,1\}$.  Moreover
each invariant measure $\mu$ of \eqref{eq:B:eqn:vel:2}--\eqref{eq:theta} satisfies the exponential moment bound
\begin{align}
  \int_{H} \exp(\eta (\|\bfU\|^2+\|\theta\|_{L^p}^{2})) d \mu(U) \leq C\exp(\eta C_5) < \infty
  \label{eq:exp:your:mom:invar:measure}
\end{align}
for all $0<\eta < \eta_1$, $p\geq 2$, where $\eta_1 = \eta_1(p,\prN, \ra, \rab,\|\tilde{\sigma}\|)$ and $C_5 = C_5(p,\prN, \ra, \rab,\|\tilde{\sigma}\|)$.
These constants are independent
of $\prN$ if $\tilde{\sigma} = 0$.
\end{Prop}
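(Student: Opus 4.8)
The plan is to establish existence by the Krylov--Bogoliubov averaging procedure, and then to transfer the a priori exponential moment estimates of Section~\ref{sec:3} to an arbitrary invariant measure by converting them into a Foster--Lyapunov drift inequality. For existence, I would fix a deterministic datum $U_0 \in H$ (say $U_0 = 0$) and form the time averages $\mu_t(\,\cdot\,) = \frac{1}{t}\int_0^t P_s(U_0, \cdot)\, ds$. Using \eqref{eq:prb:est} together with the formula \eqref{eq:cake}, one controls $\frac{1}{t}\int_0^t \E\big(\|\nabla \bfU(s)\|^2 + \|\nabla \Trs(s)\|^2\big)\, ds$ uniformly for $t \geq 1$, so that $\sup_{t \geq 1}\int_H \|U\|_V^2\, d\mu_t < \infty$. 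Since $V$ embeds compactly into $H$, Chebyshev's inequality yields tightness of $\{\mu_t\}_{t \geq 1}$, and Prokhorov's theorem produces a weakly convergent subsequence $\mu_{t_n} \to \mu$. The Feller property from Proposition~\ref{prop:existence-uniqueness:2d} then guarantees, in the standard way, that $\mu$ is invariant. To extract an \emph{ergodic} measure I would invoke that the set $\mathcal{I}$ of invariant probability measures is nonempty, convex, and (by the same uniform bounds) tight, hence weakly compact; by Krein--Milman $\mathcal{I}$ has an extreme point, and extreme points of $\mathcal{I}$ are precisely the ergodic invariant measures, giving the stated dichotomy $\mu(A) \in \{0,1\}$.

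For the moment bound, set $W(U) := \|\bfU\|^2 + \|\Trs\|_{L^p}^2$ and $V := \exp(\eta W)$. First I would combine the velocity estimate \eqref{eq:exp:est:2} (which bounds $\E\exp(2\eta\|\bfU(t^*)\|^2) \leq \E\exp(2\eta\|U(t^*)\|^2)$) with the temperature estimate \eqref{eq:gen:drift:diff:bnd:2} (applied to $\Trs$, which solves a drift--diffusion equation of the form \eqref{eq:bous:drift:diff:stoch:sys:1}) via the Cauchy--Schwarz inequality $\E\exp(\eta(A+B)) \leq (\E\exp(2\eta A))^{1/2}(\E\exp(2\eta B))^{1/2}$. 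The crucial structural feature is that the right-hand sides of both \eqref{eq:exp:est:2} and \eqref{eq:gen:drift:diff:bnd:2} depend on the datum only through $\|U_0\|^2$ with an exponentially \emph{decaying} prefactor; since $\|U_0\|^2 \leq C\, W(U_0)$ on the bounded domain $\DD$ for $p \geq 2$, this produces, after shrinking the admissible range of $\eta$, an estimate of the form $P_{t^*}V(U_0) \leq C e^{\eta C_5} \exp\big(c\, \eta\, e^{-c' t^*}\, W(U_0)\big)$. The $\prN$-independence of the constants when $\tilde{\sigma}=0$ is inherited directly from the corresponding independence in Proposition~\ref{lem:exp-bound}.

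Choosing $t^*$ large enough that $c\, e^{-c' t^*} \leq \tfrac12$ gives $P_{t^*}V \leq C e^{\eta C_5} V^{1/2}$, and Young's inequality $V^{1/2} \leq \epsilon V + (4\epsilon)^{-1}$ with $\epsilon$ small converts this into a genuine drift condition $P_{t^*}V \leq \alpha V + \beta$ with $\alpha \in (0,1)$ and $V \geq 1$. To conclude $\int_H V\, d\mu \leq \beta/(1-\alpha)$ for \emph{every} invariant $\mu$, I would iterate the drift to $P_{n t^*}V \leq \alpha^n V + \beta/(1-\alpha)$, test the invariance identity $\int V_M\, d\mu = \int P_{nt^*}V_M\, d\mu$ against the bounded truncations $V_M := V \wedge M$, and use dominated convergence (the constant $M$ dominates and $\mu$ is a probability measure) to send $n \to \infty$ and then $M \to \infty$. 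I expect the main obstacle to be precisely this last step for a \emph{general} invariant measure: one does not know a priori that $\int W\, d\mu < \infty$, and it is the truncation-plus-iteration device that circumvents the apparent circularity. The other point requiring care is establishing the combined estimate for $W$ with the correct decaying prefactor --- in particular tracking how the admissible $\eta_1$ shrinks under the Cauchy--Schwarz splitting and under the passage to the full $H$-norm on the right-hand side.
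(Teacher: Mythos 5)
Your proposal is correct, and for the existence and ergodicity steps it follows the paper's route exactly: Krylov--Bogoliubov averages made tight by the time-averaged $V$-norm bound extracted from \eqref{eq:prb:est}, followed by Krein--Milman applied to the compact convex set $\mathcal{I}$ of invariant measures. Where you genuinely diverge is the exponential moment bound. The paper tests the invariance identity against the truncations $\phi_R = \exp(\eta(\|\bfU\|^2+\|\Trs\|_{L^p}^2))\wedge R$, splits the outer integral over a ball $B(\rho)$ and its complement as in \eqref{eq:unif:3}, and sends $t\to\infty$, then $\rho\to\infty$, then $R\to\infty$; the decay in $t$ of the right-hand sides of \eqref{eq:exp:est:2} and \eqref{eq:gen:drift:diff:bnd:2} does all the work, and no drift inequality is ever formed. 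You instead package the same two estimates (via the same Cauchy--Schwarz splitting the paper leaves implicit) into a Foster--Lyapunov condition $P_{t^*}V\leq \alpha V+\beta$ and iterate against $V\wedge M$. Both devices resolve the same circularity (one does not know a priori that $\int V\,d\mu<\infty$) by truncation, and both shrink $\eta_1$ by the same bookkeeping; your version is slightly more portable (it is the standard route to geometric ergodicity statements), while the paper's is shorter because it never needs Young's inequality or the iteration. One point in your sketch deserves tightening: the tightness of $\mathcal{I}$ does \emph{not} follow from ``the same uniform bounds'' used for the Krylov--Bogoliubov sequence, since those are bounds along a trajectory from a fixed datum. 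The paper obtains the needed uniform $H^1$ bound \eqref{eq:H1:uniform:invar} for \emph{arbitrary} invariant measures from the stationary energy identity \eqref{eq:H1:uniform:invar:0} combined with the exponential moment bound \eqref{eq:exp:your:mom:invar:measure} just proven; you should make this step explicit, and note that it forces the moment bound to be established \emph{before} the Krein--Milman argument, reversing the order in which you presented them.
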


Before we present the proof of Proposition \ref{prop:invar:meas:existence},
let us remark on an important consequence which appeared in our companion work \cite{FoldesGlattHoltzRichardsThomann2014b}.
\begin{Cor}
\label{cor:ref1}
Suppose $\tilde{\sigma}=0$ and $d=2$.  Let $\{\mu_{\prN}\}_{\prN\geq 2}$ denote any sequence of invariant measures for
\eqref{eq:B:eqn:vel:2}--\eqref{eq:theta} corresponding to increasing values of $\prN\geq 2$.
For any $p\geq 2$, there exists $\eta=\eta(p, \ra, \rab,\|\tilde{\sigma}\|)$, independent of $\prN$, such that
\begin{align}
\label{eq:unif:4}
\sup_{\prN\geq 2}\int_{H}\exp\left(\eta\left(\|\bfU\|^2 + \|\theta\|_{L^p}^{2}\right)\right)d\mu_{\prN}(\bfU,\theta)  < \infty.
\end{align}
\end{Cor}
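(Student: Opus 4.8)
The plan is to read the corollary off directly from the exponential moment bound \eqref{eq:exp:your:mom:invar:measure} of Proposition \ref{prop:invar:meas:existence}, the whole content being the exploitation of the $\prN$-uniformity of the constants in that proposition in the regime $\tilde{\sigma}=0$. Proposition \ref{prop:invar:meas:existence} asserts that every invariant measure $\mu$ of \eqref{eq:B:eqn:vel:2}--\eqref{eq:theta} satisfies $\int_H \exp(\eta(\|\bfU\|^2 + \|\theta\|_{L^p}^2))\,d\mu \leq C\exp(\eta C_5)$ for all $0<\eta<\eta_1$, and states explicitly that both the threshold $\eta_1$ and the constant $C_5$ are \emph{independent of} $\prN$ once $\tilde{\sigma}=0$. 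This last point is exactly what the corollary needs.

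Given this, the argument is essentially immediate. First I would fix, once and for all, a value $\eta = \eta(p,\ra,\rab) \in (0,\eta_1)$, for instance $\eta = \eta_1/2$; by the stated $\prN$-independence of $\eta_1$ this single choice is admissible for \emph{every} $\prN \geq 2$ simultaneously. Applying \eqref{eq:exp:your:mom:invar:measure} to each member $\mu_{\prN}$ of the family at this fixed $\eta$ yields $\int_H \exp(\eta(\|\bfU\|^2 + \|\theta\|_{L^p}^2))\,d\mu_{\prN} \leq C\exp(\eta C_5)$, and since the right-hand side is a fixed finite number not depending on $\prN$, taking the supremum over $\prN \geq 2$ preserves finiteness and delivers \eqref{eq:unif:4}.

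The substantive content therefore lies not in the corollary but in being sure that the $\prN$-independence of $\eta_1$ and $C_5$ really does survive the proof of Proposition \ref{prop:invar:meas:existence}, and I expect this bookkeeping to be the only point requiring care. The bound \eqref{eq:exp:your:mom:invar:measure} is produced from the finite-time exponential estimate \eqref{eq:exp:est:2} of Proposition \ref{lem:exp-bound} via an invariance (Krylov--Bogoliubov) argument: integrating $P_{t^*}\exp(\eta\|\cdot\|^2)$ against the invariant $\mu$ and choosing $t^*$ so large that $\kappa e^{-t^*/2}<1$ lets one absorb the quadratic term back against $\mu$, with $C_5$ coming out as a fixed multiple of $C_4$. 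When $\tilde{\sigma}=0$ the relevant constants $\eta_1 = 1/(64\ra^2+8)$, $C_4 = C(1+\ra^2)(1+\rab^2)$ and $\kappa = C(1+\ra^2)$ in \eqref{eq:const:exp:decay:mom} are manifestly free of $\prN$ (as flagged in the Remark after Proposition \ref{lem:exp-bound}), so the threshold $t^*$ depends only on $\ra$ and the absorption introduces no $\prN$; meanwhile the $\|\theta\|_{L^p}^2$ part of \eqref{eq:exp:your:mom:invar:measure} is supplied by \eqref{eq:gen:drift:diff:bnd:2} of Proposition \ref{prop:lets:make:the:most:of:the:moments:that:count}, whose constants involve only $\rab,p,\|\sigma\|_{L^p},|\DD|$ and again never $\prN$. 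Since all of these ingredients are already in hand, no genuinely new estimate is needed and the corollary follows.
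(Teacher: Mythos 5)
Your proposal is correct and is essentially the paper's argument: the paper offers no separate proof of Corollary \ref{cor:ref1}, treating it as an immediate consequence of the exponential moment bound \eqref{eq:exp:your:mom:invar:measure} in Proposition \ref{prop:invar:meas:existence} together with the explicitly stated $\prN$-independence of $\eta_1$ and $C_5$ when $\tilde{\sigma}=0$. Your tracing of that independence back through \eqref{eq:exp:est:2}, \eqref{eq:const:exp:decay:mom} and \eqref{eq:gen:drift:diff:bnd:2} matches how the paper justifies it.
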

\noindent

\begin{proof}[Proof of Proposition \ref{prop:invar:meas:existence}]
Suppose $d=2$, fix any $(\bfU_0,\Trs_0) \in H$, $T > 0$ and
let $\bfU(t) = \bfU(t, \bfU_0)$, $\Trs(t) = \Trs(t, \Trs_0)$ denote the solution of \eqref{eq:B:eqn:vel:2}--\eqref{eq:theta}.
Define the probability measures $\mu_T \in Pr(H)$ as
\begin{align}
  \mu_{T}(A) = \frac{1}{T} \int_{0}^{T} \Prb((\bfU,\Trs)(t) \in A) dt,
\qquad (A \in \mathcal{B}(H))\,.
\label{eq:KB-def}
\end{align}
From \eqref{eq:prb:est} we have
\begin{align}
\label{eq:invar:1}
  \sup_{T \geq 1} \frac{1}{T} \E \int_0^T \left( \| \nabla \bfU\|^2 + \|\nabla \Trs \|^2 \right)ds \leq C <\infty\,,
\end{align}
and hence with the Chebyshev inequality we infer that the collection $\{ \mu_T\}_{T \geq 1}$
is tight and thus weakly compact.  It follows immediately that any sub-sequence
converges to an invariant measure
for \eqref{eq:B:eqn:vel:2}--\eqref{eq:theta}; see e.g. \cite{ZabczykDaPrato1992}.

We next establish \eqref{eq:exp:your:mom:invar:measure} as follows.  Consider any invariant measure $\mu \in Pr(H)$
of \eqref{eq:B:eqn:vel:2}--\eqref{eq:theta}.  Fix any $R > 0$ and define
$\Phi_R : H \to \RR$ as
$\phi_R(U) = \exp(\eta (\| \bfU\|^2 + \|\Trs\|_{L^p}^2)) \wedge R$, where $U=(\bfU,\theta)$,
$\eta$ is given as in \eqref{eq:exp:your:mom:invar:measure} and $R > 0$.  Since $\phi_R$ is continuous and bounded on
$H$, and $\mu$ is
invariant, it follows that
\begin{align}
	\int_H \phi_R(U) d \mu(U)  = \int_H P_t \phi_R(U) d \mu (U)
	= \int_H \int_H \phi_R(\tilde{U}) P_t(U, d\tilde{U}) d \mu (U),
\label{eq:unif:1}
\end{align}
for any $t \geq 0$.
On the other hand, by applying \eqref{eq:exp:est:2} and \eqref{eq:gen:drift:diff:bnd:2}
we infer that, for any $U \in H$,
\begin{align}
\label{eq:unif:2}
\int_H \phi_R(\tilde{U}) P_t(U_0, d\tilde{U}) = \E \phi_R(U(t,U_0))
\leq C \exp\left(\eta \left(e^{ -\epsilon_1 t} \kappa_1 (\|u_0\|^2+\|\Trs_0\|_{L^p}^{2}) + C_5\right) \right),
\end{align}
where $\epsilon_1,\kappa_1, C_5>0$ depend on $\ra,\rab,p$ and $|\mathcal{D}|$ and $\prN$.
Note that these constants are independent of $\prN$ if $\tilde{\sigma}=0$.

Combining these observations we infer that, for any $t > 0$, $\rho > 0$, $R > 0$,
\begin{align}
\label{eq:unif:3}
   \int_H \phi_R(U) d \mu(U)
   	   &=
   \int_{B(\rho)} \int_{H} \phi_R(\tilde{U}) P_t(U, d\tilde{U}) d \mu (U) +
   \int_{B(\rho)^c} \int_{H} \phi_R(\tilde{U}) P_t(U, d\tilde{U}) d \mu (U)\notag \\
	&\leq
   C \exp\left(\eta \left(e^{ -\epsilon_1 t} \kappa_1 \rho^2 + C_5\right) \right) + R \mu( B(\rho)^c).
\end{align}
The desired result now follows by first taking $t \to \infty$ then $\rho \to \infty$ and finally $R \to \infty$
and using the monotone convergence theorem.

Working from \eqref{eq:exp:your:mom:invar:measure} we can establish further uniform moment bounds
in $H^1$.  Returning to \eqref{eq:u:ito}, \eqref{eq:theta:ito} we infer
\begin{align}
 2 \E (\|\nabla \bfU\|^{2} + \|\nabla\Trs\|^{2})
=& \E \left(2(\ra + \rab)\langle \Trs,u_2\rangle  +  \prN \|\tilde{\sigma}\|^{2} + 1\right) \notag\\
\leq& \E \left((\ra + \rab) (\| \Trs \|^2 + \|u_2 \|^2)  +  \prN \|\tilde{\sigma}\|^{2} + 1\right)
\label{eq:H1:uniform:invar:0}
\end{align}
for any stationary solution of \eqref{eq:B:eqn:vel:2}--\eqref{eq:theta}.  Taking
$\mathcal{I}$ to be the collection of invariant measures of $\{P_t\}_{t \geq 0}$,
and combining \eqref{eq:H1:uniform:invar:0} with \eqref{eq:exp:your:mom:invar:measure}
we infer 
\begin{align}
   \label{eq:H1:uniform:invar}
   \sup_{\mu \in \mathcal{I}} \int_{H} \| \nabla U \|^2 d \mu(U) < \infty.
\end{align}
So far we have only obtained the existence of an invariant measure, in particular    $\mathcal{I}$ is non-empty.  It is easy to see that $\mathcal{I}$ is convex and closed.
In view of \eqref{eq:H1:uniform:invar} we have furthermore that the collection $\mathcal{I}$ is tight
and hence compact.  We may thus infer the existence of an extremal
point in $\mathcal{I}$ from Krein-Milman theorem.    Since, cf. \cite{ZabczykDaPrato1992}, the
ergodic invariant measure consist of the extremal points of $\mathcal{I}$, we infer that $\mathcal{I}$
must contain an ergodic invariant measure, completing the proof of the first item.
\end{proof}

\begin{Prop}
\label{prop:6} The Markov semigroup corresponding to \eqref{eq:vel:inf}--\eqref{eq:theta:inf} possesses an
ergodic invariant measure $\mu\in Pr(H_2)$, and each invariant measure satisfies the exponential moment bound
\begin{align}
  \int_{H} \exp(\eta \|\Tho\|_{L^p}^2) d \mu(\Tho) \leq C\exp(\eta \tilde{C}_4) < \infty,
  \label{eq:exp:your:mom:invar:measure:2}
\end{align}
for all $0<\eta < \eta_2$, $p\geq 2$, where $\eta_2 = \eta_2(p,\ra, \rab) > 0$, $\tilde{C}_5 = \tilde{C}_5(p,\ra, \rab)$.
\end{Prop}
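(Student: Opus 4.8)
The plan is to follow the three-step argument already carried out for Proposition~\ref{prop:invar:meas:existence}: produce an invariant measure by Krylov--Bogoliubov averaging, upgrade to the exponential moment bound by a truncation argument exploiting invariance, and finally extract an ergodic measure via Krein--Milman. The structural simplification here is that the velocity $\Uo$ is enslaved to $\Tho$ through the Stokes problem \eqref{eq:vel:inf}, so that \eqref{eq:theta:inf} is precisely a stochastic drift-diffusion equation of the form \eqref{eq:bous:drift:diff:stoch:sys:1} with $\bfV = \Uo$. Consequently all the estimates of Proposition~\ref{prop:dde} are at our disposal, and the entire argument takes place on the single phase space $H_2$.

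For existence I would fix $\Tho_0 \in H_2$ and form the averaged measures $\mu_T(A) = \tfrac{1}{T}\int_0^T \Prb(\Tho(t)\in A)\,dt$. The a priori bound \eqref{eq:bd:theta} gives $\sup_{T\geq 1}\tfrac{1}{T}\E\int_0^T \|\nabla\Tho\|^2\,ds \leq C$, so by Chebyshev's inequality together with the compact embedding $V_2 \hookrightarrow H_2$ the family $\{\mu_T\}_{T\geq 1}$ is tight, hence weakly compact. Any weak subsequential limit is an invariant measure by the standard reasoning, using the Feller property recorded in Proposition~\ref{prop:existence-uniqueness:inf}.

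For the moment bound I would reproduce the chain \eqref{eq:unif:1}--\eqref{eq:unif:3}. Given an invariant measure $\mu$, set $\phi_R(\Tho) = \exp(\eta\|\Tho\|_{L^p}^2)\wedge R$, use invariance to write $\int\phi_R\,d\mu = \int\!\int \phi_R(\tilde\Tho)\,P_t(\Tho,d\tilde\Tho)\,d\mu(\Tho)$, and split the outer integral over a ball $B(\rho)\subset H_2$ and its complement. On $B(\rho)$ the inner integral is controlled by the $L^p$ estimate \eqref{eq:gen:drift:diff:bnd:2}, whose right-hand side depends only on the $L^2$-norm of the data and decays exponentially in $t$; the complement contributes at most $R\,\mu(B(\rho)^c)$. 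Sending $t\to\infty$, then $\rho\to\infty$, then $R\to\infty$, and invoking monotone convergence, yields \eqref{eq:exp:your:mom:invar:measure:2}. One caveat must be checked before applying Proposition~\ref{prop:dde} in three dimensions: since $\Uo$ solves the Stokes system with $L^2$ forcing $\ra\hate\Tho$, elliptic regularity gives $\Uo \in H^2(\DD)$, so the hypothesis that $\bfV$ lies in $L^2(\Omega;L^2_{loc}(H^2))$ (in the case $q\geq 2$) is met and the $L^p$ bound is legitimate.

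Finally, for ergodicity the collection $\mathcal{I}$ of invariant measures is convex and closed; arguing exactly as for \eqref{eq:H1:uniform:invar}, by integrating the energy balance for $\Tho$ against the stationary law and using \eqref{eq:inf:u:bound} together with the exponential moments just obtained, one obtains a uniform $H^1$ bound over $\mathcal{I}$. Hence $\mathcal{I}$ is tight, and therefore compact, so Krein--Milman furnishes an extremal point, which is the desired ergodic measure. I expect the only step requiring genuine attention to be the elliptic regularity verification for $\Uo$ needed to invoke the $L^p$ drift-diffusion estimate in $d=3$; the remaining arguments are entirely parallel to those already established for Proposition~\ref{prop:invar:meas:existence}.
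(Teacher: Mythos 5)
Your proposal is correct and follows exactly the route the paper intends: the published proof of this proposition simply states that it is ``a simple modification of the proof of Proposition~\ref{prop:invar:meas:existence}'' invoking Proposition~\ref{prop:dde}, which is precisely the Krylov--Bogoliubov / truncated-exponential / Krein--Milman chain you reproduce. Your added verification that $\Uo\in H^2(\DD)$ by Stokes regularity (so that the $d=3$ hypothesis of Proposition~\ref{prop:dde} is met) is a detail the paper leaves implicit, and it is handled correctly.
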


\begin{proof}[Proof of Proposition \ref{prop:6}]
By invoking Proposition \ref{prop:dde}, this requires a simple modification of the proof of Proposition \ref{prop:invar:meas:existence},
and we omit the details.
\end{proof}

\subsection{Existence of Statistically Invariant States}

In three space dimensions we establish the existence of statistically invariant states.  Namely, we rephrase and prove Theorem \ref{thm:1b}
as follows.
\begin{Prop}
\label{prop:5}
When $d = 3$ there exists a measure $\mu \in Pr(H)$ and a corresponding (Martingale) solution of \eqref{eq:B:eqn:vel:2}--\eqref{eq:theta}
which is stationary in time, namely $\mu_t(\cdot) = \Prb(U(t) \in \cdot)$ is identically equal to $\mu$.  Moreover $\mu$ satisfies an
exponential moment bound as in \eqref{eq:exp:your:mom:invar:measure}.
\end{Prop}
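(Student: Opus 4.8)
The plan is to follow the regularization strategy of Flandoli--Gatarek \cite{FlandoliGatarek1} (cf. \cite{GlattHoltzSverakVicol2013}), Galerkin-truncating \emph{only} the velocity equation so as to preserve the advection-diffusion structure of the temperature equation. Let $P_n$ denote the orthogonal projection onto the span of the first $n$ Stokes eigenfunctions, and consider the system obtained from \eqref{eq:B:eqn:vel:2}--\eqref{eq:theta} by replacing $\bfU$ with $\bfU^n = P_n \bfU$ and projecting the velocity equation by $P_n$, while leaving the temperature equation \eqref{eq:theta} intact (with $\bfU^n$ in place of $\bfU$). Since $\bfU^n$ is a finite linear combination of smooth eigenfunctions it is spatially smooth and remains divergence free; the temperature equation is thus a genuine advection-diffusion equation driven by an admissible velocity field, and the truncated system is pathwise well posed and generates a Feller semigroup on $H$.

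First I would construct, for each $n$, a stationary solution $U^n = (\bfU^n, \Trs^n)$ with invariant law $\mu^n$ by the Krylov--Bogoliubov averaging argument of Proposition \ref{prop:invar:meas:existence}. The crucial point is that all the a priori estimates of Section \ref{sec:3} survive the truncation \emph{uniformly in} $n$: the velocity nonlinearity is annihilated by $P_n$ in the energy balance (orthogonality of the projection), the divergence-free field $\bfU^n$ eliminates the advective terms in the $L^2$ and $L^p$ temperature estimates, and the maximum-principle comparison underlying Proposition \ref{prop:dde} applies verbatim with $\bfV = \bfU^n$. Consequently the exponential moment bound \eqref{eq:exp:your:mom:invar:measure} and the $H^1$ bound \eqref{eq:H1:uniform:invar} hold for each $\mu^n$ with constants independent of $n$, and likewise $\sup_n \int_H \|\Trs\|_{L^3}^2 \, d\mu^n < \infty$, supplying the $q=3$ integrability required for Proposition \ref{lem:exp-bound} in the three-dimensional setting.

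Next I would upgrade tightness of the time-marginals to tightness at the level of path space. The uniform $H^1$ bound together with stationarity gives $\E \int_0^T \|\nabla U^n\|^2 \, dt = T \int_H \|\nabla U\|^2 \, d\mu^n \leq CT$ uniformly in $n$, while testing the two equations in the respective dual norms and estimating the stochastic integral yields a uniform fractional-in-time regularity bound of the form $U^n \in W^{\alpha,2}([0,T]; V')$ for $\alpha < 1/2$. By the Aubin--Lions--Simon compactness lemma these combine to give tightness of the laws of $U^n$ on $L^2_{loc}([0,\infty); H) \cap C([0,\infty); V'_{\mathrm{weak}})$. I would then invoke Prokhorov and the Skorokhod embedding theorem to produce, on a new stochastic basis, processes $\tilde U^n$ with the same laws converging almost surely to a limit $\tilde U = (\tilde\bfU, \tilde\Trs)$, and pass to the limit in the equations; the strong $L^2_{loc}([0,\infty);H)$ convergence is exactly what is needed to identify the limits of the quadratic advection terms, and a standard martingale representation argument reconstructs the driving Wiener process and certifies $\tilde U$ as a martingale solution of the full system \eqref{eq:B:eqn:vel:2}--\eqref{eq:theta}. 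Stationarity is preserved under these limits, so the common time-$t$ marginal $\mu$ of $\tilde U$ is the desired statistically invariant state, and the exponential moment bound \eqref{eq:exp:your:mom:invar:measure} transfers to $\mu$ by weak lower semicontinuity from the uniform bounds on $\mu^n$.

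I expect the main obstacle to be the passage to the limit in the nonlinear advection terms, which hinges on establishing the uniform fractional-time-regularity estimates (delicate for the temperature transport $\bfU^n \cdot \nabla \Trs^n$ in three dimensions) and on the Skorokhod/martingale-identification machinery. The decision to truncate the velocity only is what makes this tractable, since it keeps the temperature equation a bona fide advection-diffusion equation and thereby preserves the maximum-principle and $L^p$ estimates of Section \ref{sec:3} uniformly in $n$ --- these being precisely the estimates that furnish both the compactness and the exponential moments.
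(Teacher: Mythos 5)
Your proposal follows essentially the same route as the paper: a Galerkin truncation of the velocity equation only (preserving the drift-diffusion structure of the temperature equation so that the maximum-principle and $L^p$ estimates of Section \ref{sec:3} apply uniformly), Krylov--Bogoliubov to obtain invariant measures $\mu^N$ with $N$-uniform exponential moments, uniform fractional-in-time regularity plus Aubin--Lions/Arzel\`a--Ascoli compactness on path space, Skorokhod embedding and martingale identification, and transfer of the moment bound to the limit. The only cosmetic difference is your choice of dual space ($V'$ versus the paper's $X^* = (V\cap (H^3(\DD))^4)^*$) for the time-regularity estimate, which does not change the argument.
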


As above, we mention a useful corollary of Proposition \ref{prop:5} which appeared in the manuscript
\cite{FoldesGlattHoltzRichardsThomann2014b}.
\begin{Cor}
\label{cor:2}
Suppose $\tilde{\sigma}=0$ and $d=3$.  For any $p\geq 2$, the system \eqref{eq:B:eqn:vel:2}--\eqref{eq:theta} possesses a sequence of statistically invariant states
$\{\mu_{\prN}\}_{\prN\geq 2}$ such that \eqref{eq:unif:4} is satisfied.
\end{Cor}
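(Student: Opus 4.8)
The plan is to obtain the sequence $\{\mu_{\prN}\}_{\prN \geq 2}$ by invoking Proposition~\ref{prop:5} for each fixed $\prN \geq 2$, and then to verify that the exponential moment bound guaranteed by that proposition can be arranged with a threshold and constant that do not depend on $\prN$. Concretely, for each $\prN \geq 2$ Proposition~\ref{prop:5} produces a statistically invariant state $\mu_{\prN} \in Pr(H)$ for \eqref{eq:B:eqn:vel:2}--\eqref{eq:theta} together with a bound of the form
\begin{align*}
  \int_{H} \exp\!\left( \eta ( \|\bfU\|^2 + \|\Trs\|_{L^p}^2) \right) d\mu_{\prN}(\bfU,\Trs) \leq C \exp(\eta C_5),
\end{align*}
valid for every $0 < \eta < \eta_1$. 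Thus the only point requiring attention is the dependence of $\eta_1$ and $C_5$ on the Prandtl number.

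The key step is to track this dependence back through the a priori estimates. The moment bound in Proposition~\ref{prop:5} is inherited, via the Krylov--Bogoliubov/Flandoli--Gatarek construction used there, from the exponential estimate \eqref{eq:exp:est:2} of Proposition~\ref{lem:exp-bound} together with the drift--diffusion bound \eqref{eq:gen:drift:diff:bnd:2}. By the Remark following Proposition~\ref{lem:exp-bound}, when $\tilde{\sigma} = 0$ the constants $\eta_1$, $C_4$ and $\kappa$ in \eqref{eq:exp:est:2} are \emph{independent} of $\prN$; moreover the constants $C''$ and $\kappa$ in \eqref{eq:gen:drift:diff:bnd:2} depend only on $\rab, p$ and $|\DD|$ and never involve $\prN$ at all. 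Consequently one may select a single threshold $\eta = \eta(p,\ra,\rab) < \eta_1$ and a single constant $C_5 = C_5(p,\ra,\rab)$ which serve simultaneously for every $\prN \geq 2$.

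With this $\prN$-uniform choice in hand, the displayed bound holds with the same $\eta$ and the same right-hand side for all $\prN \geq 2$, so taking the supremum over $\prN \geq 2$ yields \eqref{eq:unif:4}. The main point to verify carefully is that the martingale-solution construction underlying Proposition~\ref{prop:5} — which passes to a limit in a velocity Galerkin truncation — does not reintroduce any $\prN$-dependence in the moment bound. This is ensured because the relevant exponential estimates hold uniformly both in the Galerkin parameter and in $\prN$, so the bound survives the limit passage by lower semicontinuity (Fatou's lemma applied along the approximating stationary measures). The argument mirrors exactly the $d=2$ reasoning behind Corollary~\ref{cor:ref1}, the only difference being that the $d=3$ existence theory of Proposition~\ref{prop:5} replaces Proposition~\ref{prop:invar:meas:existence}.
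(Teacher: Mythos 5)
Your proposal is correct and follows essentially the same route the paper indicates: the paper's own (two-sentence) proof likewise combines the exponential moment bound of Proposition~\ref{prop:5} with the observation that, when $\tilde{\sigma}=0$, the constants in \eqref{eq:exp:est:2} and \eqref{eq:gen:drift:diff:bnd:2} are independent of $\prN$, the $\|\Trs\|_{L^p}^2$ part coming from the $\prN$-free drift--diffusion estimate. Your additional care about the bound surviving the Galerkin limit via weak convergence and monotone/Fatou arguments matches what is already done at the end of the proof of Proposition~\ref{prop:5}.
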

\noindent One can prove Corollary \ref{cor:2} by combining \eqref{eq:exp:your:mom:invar:measure} with
$\prN$-uniform bounds on the $\|\theta\|_{L^p}^{2}$ term in \eqref{eq:unif:4}.  These latter bounds follow by using \eqref{eq:gen:drift:diff:bnd:2} and
mimicking the proof of \eqref{eq:exp:your:mom:invar:measure} in the case $d=3$ (see below).

\begin{proof}[Proof of Proposition \ref{prop:5}]
We establish the existence of a statistically invariant state using a modified Galerkin
truncation scheme.  Let $P_N$ be the projection onto the first $N$ eigenfunctions of the
Stokes operator, which are of course divergence free. Define
\begin{align}
  &d \bfU^N + P_N( \bfU^N \cdot \nabla \bfU^N )= \prN P_N \Delta \bfU^N dt + \prN \ra P_N \hate \Trs^N dt
  + \prN \sum_{k = 1}^{N_1} P_N \tilde{\sigma}_k d\tilde{W}^k, \quad  \bfU^N(0) = P_N \bfU_0 \label{eq:B:eqn:trunc:1}\\
  &d \Trs^N + \bfU^N \cdot \nabla \Trs^N dt = u_d^N dt + \Delta \Trs^N dt + \sum_{k =1}^{N_2} \sigma_k dW^k,  \quad \Trs^N(0) = \Trs_0.
  \label{eq:B:eqn:trunc:2}
\end{align}
Observe that for each $N$ there is a unique pathwise solution $U^N(\cdot, U_0)$ which depends continuously on its initial condition $U_0 = (\bfU_0, \Trs_0) \in H$.
Take $\{P^N\}_{t \geq 0}$ to be the associated Markov semigroup.  As in Proposition~\ref{lem:exp-bound} we have
\begin{align}
	\Prb \! \left(\sup_{t\geq 0}\left(\|\bfU^N\! (t)\|^{2}
			\! + \! \frac{\prN}{2}\|\Trs^N \! (t)\|^2
                         +\frac{\prN}{2} \! \int_{0}^{t} \! \! \left(\|\nabla \bfU^N \|^2 + \frac{1}{2}\|\nabla\Trs^N \|^2 \right)\! ds - C_2t - C_1 \! \right)\geq C_3 K\!\right)
                         \! \leq 3 e^{-\gamma K},
                         \label{eq:mod:gal:exp:bnd:1}
\end{align}
where the constants $C_1, C_2, C_3$ have the same meaning as in \eqref{eq:def-c1}--\eqref{eq:def-c3}.  To see this observe that
we have only used the cancellation property in the nonlinear term to analyze the momentum equation.   We have deliberately avoided
truncating the equation for $\Trs^N$ so that $\Trs^N$ still
satisfies drift-diffusion equation \eqref{eq:bous:drift:diff:stoch:sys:1}.

Making use of \eqref{eq:mod:gal:exp:bnd:1} we may now obtain the existence of an invariant measure $\mu^N$ for each $N$ by implementing the Krylov-Bogoliubov procedure
and arguing precisely as in the $d = 2$ case.
We observe moreover that
\begin{align}
  \int_H \exp(\eta (\|\bfU\|^2 + \|\Trs\|_{L^p}^{2})) d \mu^N(U) \leq \exp(\eta C_1)\,,
  \label{eq:mod:gal:exp:bnd:2}
\end{align}
where, crucially, the constant $C_1$ in this upper bound is independent of $N$.  Here again the arguments in the proof of \eqref{eq:exp:your:mom:invar:measure} pass through virtually line by line to the present case.

To a sequence of invariant measures $\{ \mu_N\}_{N \geq 1}$ we may now associate a a
sequence of stationary solutions $\bar{U}^N$ of \eqref{eq:B:eqn:trunc:1}--\eqref{eq:B:eqn:trunc:2}.    An involved
limiting procedure very similar to e.g. \cite{FlandoliGatarek1} (see also \cite{KuksinShirikian12,GlattHoltzSverakVicol2013})
can be used to pass to a limit in this class of stationary solutions.  We briefly sketch some details.

By repeating the computations
leading to \eqref{eq:mod:gal:exp:bnd:1}
with the stationary solutions $\bar{U}^N$ we obtain that
\begin{align}
   \bar{U}^N \textrm{ is uniformly bounded in } L^p( \Omega; L^2([0,t^*]; V) \cap L^\infty([0,t^*];H))
   \label{eq:uniform:bnd:stat:sols:1}
\end{align}
for any $p \geq 1$ and any $t^* > 0$.  On the other hand writing \eqref{eq:B:eqn:trunc:1}--\eqref{eq:B:eqn:trunc:2}
as an abstract evolution equation on $H$ we may write $\bar{U}^N(t) = \bar{U}^N(0) + \int_0^t F_N(\bar{U}^N) ds + \sigma_N W(t)$
where $F_N$ is a suitable abstract operator; see e.g. \cite{ConstantinFoias88, Temam2001} for precise details.
Define now $X = V \cap (H^3(\DD))^4$ let $X^*$ be its dual, relative to $H$.   For any $\alpha \in (0, 1/2)$ and $p > 1$ we have
that
\begin{align*}
   \E \|\bar{U}^N\|_{W^{\alpha,p}([0,t^*]; X^*)}^p
   \leq C \E \left\|\bar{U}^N(0) + \int_0^{\cdot} F_N(\bar{U}^N) ds \right\|_{W^{1,p}([0,t*]; X^*)}^p +C\E \|\sigma_N W(t) \|_{W^{\alpha,p}([0,t^*]; H)}^p
\end{align*}
here $W^{\alpha,p}([0,T]; X^*)$ is the Sobolev-Slobodeckij space.
Since $\| F_N(U)\|_{X^*} \leq C(\|U\|^2 + 1)$
for any $U \in H$, where the constant $C$ does not depend on $N$ we infer that
\begin{align*}
   \E \left\|\bar{U}^N(0) + \int_0^{\cdot} F_N(\bar{U}^N) ds \right\|_{W^{1,p}([0,t^*]; X^*)}^p \leq C \E \left(  \sup_{s \in [0,t^*]} \| \bar{U}^N \|^{2p} + 1  \right).
\end{align*}
Moreover, given the standard time regularity properties of Brownian motions we obtain the uniform bound
\begin{align*}
  \sup_{N \geq 1} \E \|\sigma_N W(t) \|_{W^{\alpha,p}([0,t^*]; H)}^p  < \infty \,.
\end{align*}
Combining these bounds with \eqref{eq:uniform:bnd:stat:sols:1} we now obtain that
\begin{align}
   \bar{U}^N \textrm{ is uniformly bounded in } L^p( \Omega; W^{\alpha, p}([0,T]; X^*))
   \label{eq:uniform:bnd:stat:sols:2}
\end{align}
for any $\alpha < 1/2$ and $p > 1$.

Take $Y = V \cap (H^4(\DD))^4$ and $Y^*$ its dual relative to $H$.    We now show that
the collection of measures $\{\mu_N\}_{N \geq 1}$ given the laws of $\bar{U}^N$ and defined
on the path-space $C([0,t^*]; Y^*)$ are compact.
To this end recall that we have the compact embeddings
\begin{align}
   L^2([0,t^*]; V) \cap W^{1/4, 2}([0,t^*]; X^*) \subset \subset L^2([0,t^*];H), \quad
   W^{1/4, 8}([0,t^*]; X^* ) \subset \subset C([0,t^*]; Y^*)
   \label{eq:compact:embeddings}
\end{align}
which are variations on the Aubin-Lions and Arzel\`a-Ascoli compactness theorems respectively; see
\cite{FlandoliGatarek1} and also \cite{DebusscheGlattHoltzTemam1} for further details.
Combining the \eqref{eq:compact:embeddings} with \eqref{eq:uniform:bnd:stat:sols:1}, \eqref{eq:uniform:bnd:stat:sols:2}
we obtain that, after passing to a subsequence $\mu_N$ converges weakly in $L^2([0,t^*];H) \cap C([0,t^*]; Y^*)$.

By changing the stochastic basis we can, by Skorokhod's embedding theorem,
assume that $\bar{U}^N$ converges almost surely in $L^2([0,t^*];H)$, and in $C([0,t^*]; Y^*)$, to an element $\bar{U}$,
while maintaining the uniform bound \eqref{eq:uniform:bnd:stat:sols:1}.
Additionally, since $\bar{U}^N$ is stationary, in view of the convergence in $C([0,t^*]; Y^*)$, it follows that $\bar{U}$ is stationary too.
In order to show that $\bar{U}$ is a Martingale solution of the system \eqref{eq:B:eqn:vel:2}--\eqref{eq:theta}, one needs to obtain an
appropriate stochastic basis in conjunction with this
limiting procedure.  Here several approaches have been developed and can be used, see \cite{ZabczykDaPrato1992,FlandoliGatarek1,Bensoussan1,
DebusscheGlattHoltzTemam1,hofmanova2012}.

Finally we remark that the measure $\mu$ we obtain as the weak limit of $\{\mu^N\}$ satisfies \eqref{eq:exp:your:mom:invar:measure}.  Indeed
by \eqref{eq:mod:gal:exp:bnd:2}, $\int_H \exp(\eta (\|\bfU\|^2 + \|\Trs\|_{L^p}^2)\wedge R d \mu^N(U) \leq \exp(nC_1)$ for every $R > 0$, where the constant $C_1$ is independent of $R$.
using first weak convergence and then the monotone convergence theorem we can now pass to a limit first in $N$ and then in $R$. With this the proof of
Proposition~\ref{prop:5}, and thus of Theorem~\ref{thm:1b} is complete.
\end{proof}

\section{Unique Ergodicity Results}
\label{sec:4}

We proceed to prove the main results on unique ergodicity in this work, Theorem \ref{thm:1} and Theorem \ref{thm:2}.
We will begin by presenting an abstract framework for proving unique ergodicity by asymptotic coupling following \cite{HairerMattinglyScheutzow2011}.
This framework will serve to simplify our analysis.  Recall that the strategy for proving uniqueness
via this framework was
expanded upon for multiple examples in forthcoming work of a subset of the coauthors in collaboration with Mattingly,
\cite{GlattHoltzMattinglyRichards2015}.

\subsection{Abstract Results from Ergodic Theory}

To prove our ergodic theorems we will make use of the following abstract result from \cite{HairerMattinglyScheutzow2011}.
Let $P$ be a Markov transition function on a Polish space $(H, \rho)$.    Denote the collection of
all Borel probability measures on $H$ by $Pr(H)$.  Take $P_{\infty}$ to
be the associated transition function on the path space of infinite one-sided sequences $H_{\infty}=H^\NN$,
and for $\mu\in Pr(H)$, let $\mu P_{\infty}\in Pr (H_{\infty})$ be the measure defined by $\int_{H_\infty}P_{\infty}(x,\cdot)d\mu(x)$.
Given $\mu_{1}, \mu_{2} \in \Pr(H)$, consider
\begin{align*}
  \tilde{\mathcal{C}}_{\infty}(\mu_{1}, \mu_{2}) := \{ \Gamma \in Pr (H_{\infty} \times H_{\infty}): \Pi_{i}^\# \Gamma \ll \mu_{i} P_{\infty} \textrm{ for each } i \in \{1, 2\} \}\,,
\end{align*}
where $\Pi_{i}$ is the projection onto the $i^{\textrm{th}}$ coordinate and $f^\# \mu$ is a push forward of the measure $\mu$, that is, $f^\# \mu(A) = \mu(f^{-1}(A))$.
We will also denote $D := \{(x,y) \in H_{\infty}\times H_{\infty}: \lim_{n \to \infty} \rho(x_{n},y_{n}) = 0 \}$.
\begin{Thm}[\cite{HairerMattinglyScheutzow2011}]\label{thm:HMS2011}
 Suppose there exists a Borel measurable set $\mathcal{A} \subset H$ such that
\begin{itemize}
\item[(i)] for any $P$ invariant Borel probability measure $\mu$, $\mu(\mathcal{A}) > 0$,
\item[(ii)] there exists a measurable map $\Gamma: \mathcal{A} \times \mathcal{A} \to Pr(H_{\infty} \times H_{\infty})$
such that,  for all $x, y \in \mathcal{A}$,  $\Gamma(x,y) \in  \tilde{\mathcal{C}}_{\infty}(\delta_{x},\delta_{y})$
and $\Gamma(x,y)(D) > 0$,
\end{itemize}
then there is at most one invariant probability measure for $P$.
\end{Thm}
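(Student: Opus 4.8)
The plan is to argue by contradiction: if $P$ admits more than one invariant probability measure, then the ergodic decomposition produces at least two \emph{distinct} ergodic invariant measures $\mu_1 \neq \mu_2$, and I will manufacture a single coupling on path space whose existence is incompatible with the individual ergodic theorem applied to $\mu_1$ and $\mu_2$. The reduction to ergodic measures is standard: every invariant measure is a barycenter of ergodic ones, so if all ergodic invariant measures coincided there would be a unique invariant measure.

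Next I would assemble a global coupling out of the fiberwise data supplied by (ii). Since $\mu_i(\mathcal{A}) > 0$ by (i), set $\bar\mu_i := \mu_i(\,\cdot\,\cap \mathcal{A})/\mu_i(\mathcal{A})$, so that $\bar\mu_i \ll \mu_i$ and $\bar\mu_i$ is supported in $\mathcal{A}$; integrating $\Gamma$ against $\bar\mu_1\times\bar\mu_2$ (which lives on $\mathcal{A}\times\mathcal{A}$) is therefore well defined. Using the measurability of $(x,y)\mapsto\Gamma(x,y)$, define
\begin{align*}
  \bar\Gamma := \int_{\mathcal{A}\times\mathcal{A}} \Gamma(x,y)\, d(\bar\mu_1\times\bar\mu_2)(x,y) \in Pr(H_\infty\times H_\infty).
\end{align*}
I would then check $\bar\Gamma\in\tilde{\mathcal{C}}_\infty(\bar\mu_1,\bar\mu_2)$: if $(\bar\mu_1 P_\infty)(B)=0$ then $P_\infty(x,B)=0$ for $\bar\mu_1$-a.e. $x$, and since $\Pi_1^\#\Gamma(x,y)\ll P_\infty(x,\cdot)$ we get $\Pi_1^\#\Gamma(x,y)(B)=0$ for $(\bar\mu_1\times\bar\mu_2)$-a.e. $(x,y)$; integrating gives $\Pi_1^\#\bar\Gamma(B)=0$, and symmetrically for $\Pi_2^\#$. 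Finally, because $\Gamma(x,y)(D)>0$ pointwise on $\mathcal{A}\times\mathcal{A}$, integrating the Borel map $(x,y)\mapsto\Gamma(x,y)(D)$ yields $\bar\Gamma(D)>0$.

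The heart of the argument is to contradict this with the ergodic theorem. For each $i$, ergodicity of $\mu_i$ makes the shift on $(H_\infty,\mu_i P_\infty)$ ergodic, so Birkhoff's theorem, applied to a countable convergence-determining family of bounded Lipschitz functions, yields a set $G_i\subset H_\infty$ with $(\mu_i P_\infty)(G_i)=1$ on which the empirical measures $\frac1n\sum_{k=0}^{n-1}\delta_{x_k}$ converge weakly to $\mu_i$. Since $\bar\mu_i P_\infty\ll\mu_i P_\infty$ we also have $(\bar\mu_i P_\infty)(G_i)=1$, and the marginal bound $\Pi_i^\#\bar\Gamma\ll\bar\mu_i P_\infty$ forces $\Pi_i^\#\bar\Gamma(G_i)=1$; hence $\bar\Gamma(G_1\times G_2)=1$. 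Combined with $\bar\Gamma(D)>0$ this gives $\bar\Gamma\big(D\cap(G_1\times G_2)\big)>0$, so there exists $(x,y)\in D\cap(G_1\times G_2)$. But $(x,y)\in D$ means $\rho(x_n,y_n)\to0$, whence for every bounded Lipschitz $g$ one has $\frac1n\sum_{k=0}^{n-1}\big(g(x_k)-g(y_k)\big)\to0$ by a Ces\`aro estimate; letting $n\to\infty$ and using $x\in G_1$, $y\in G_2$ yields $\int g\,d\mu_1=\int g\,d\mu_2$ for all such $g$, i.e. $\mu_1=\mu_2$, the desired contradiction.

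I expect the main obstacle to be the measure-theoretic bookkeeping that makes the ergodic theorem bite on the \emph{restricted, non-invariant} measures $\bar\mu_i$: one must chain the absolute-continuity relations $\bar\mu_i\ll\mu_i\Rightarrow\bar\mu_i P_\infty\ll\mu_i P_\infty$ and $\Pi_i^\#\bar\Gamma\ll\bar\mu_i P_\infty$ so that the generic sets $G_i$, defined via the genuinely invariant $\mu_i$, still carry full mass under the marginals of $\bar\Gamma$. Verifying that absolute continuity of the fiber marginals survives the integration defining $\bar\Gamma$, together with the joint measurability needed to form $\bar\Gamma$ and to evaluate $\bar\Gamma(D)$, are the delicate points; the coupling construction and the Lipschitz--Ces\`aro estimate are then routine.
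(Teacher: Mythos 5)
The paper offers no proof of this statement: it is quoted directly from Hairer--Mattingly--Scheutzow \cite{HairerMattinglyScheutzow2011} and used as a black box, so the only meaningful comparison is with that source. Your argument is correct and is essentially the original proof from that reference: reduce to two distinct ergodic invariant measures via the ergodic decomposition, integrate the fiberwise couplings $\Gamma(x,y)$ against the normalized restrictions $\bar\mu_1\times\bar\mu_2$ on $\mathcal{A}\times\mathcal{A}$, push the Birkhoff-generic sets $G_i$ through the chain of absolute continuities $\Pi_i^\#\bar\Gamma \ll \bar\mu_i P_\infty \ll \mu_i P_\infty$, and extract a pair of asymptotically coupled generic paths forcing $\mu_1=\mu_2$; the measurability and ergodicity points you flag are handled correctly.
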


\subsection{Proof of Theorem~\ref{thm:1}}

\begin{proof}[Proof of Theorem \ref{thm:1}]
Let $P_\infty $  be the transition function on $H_{\infty}$, with the product topology,  corresponding to equations \eqref{eq:B:eqn:vel:2}--\eqref{eq:theta}
evaluated at integer times.
For a given $\Utot=(\bfU,\Trs)$ satisfying \eqref{eq:B:eqn:vel:2}--\eqref{eq:theta} and $\tilde{\Utot}_0=(\tilde{\bfU}_0,\tilde{\Trs}_0)$,
define $\tilde{\Utot}=(\tilde{\bfU},\tilde{\Trs})$ as the solution of
\begin{align}
  \frac{1}{\prN}(d \tilde{\bfU} + \tilde{\bfU} \cdot \nabla \tilde{\bfU} dt) + \nabla \tilde{p} dt &= \Delta \tilde{\bfU} dt + \ra \hate\tilde{\Trs} dt
  + \sum_{k=1}^{N_1} \tilde{\sigma}_k d\tilde{W}^k
  \label{eq:B:eqn:vel:sft}\\
  & \quad \quad - \lambda_1 P_{N_1} (\tilde{\bfU}- \bfU)\mathbbm{1}_{t<\tau_{R}} dt, \quad \nabla \cdot \tilde{\bfU} = 0,\notag \\
  d \tilde{\Trs} + \tilde{\bfU} \cdot \nabla \tilde{\Trs} dt &= \rab \tilde{u}_d dt+  \Delta \tilde{\Trs} dt +  \sum_{k=1}^{N_2}
  \sigma_k dW^k -
  \lambda_2 P_{N_2} (\tilde{\Trs}- \Trs)
  \mathbbm{1}_{t<\tau_{R}}dt,
  \label{eq:B:eqn:temp:sft}
\end{align}
with initial data $\tilde\Utot_0.$
Here $\mathbbm{1}_{t<\tau_{R}}$ is the characteristic function of $\{t<\tau_R\}$, where
\begin{align}
\label{eq:stop:1}
\tau_R = \inf\left\{t>0:\int_{0}^{t}\left(\|\sigma^{-1}P_{N_1}(\tilde{\bfU}- \bfU)\|^2 + \|\tilde{\sigma}^{-1}P_{N_2}(\tilde{\Trs}- \Trs)\|^2\right)ds \geq R
\right\}.
\end{align}
Let $\tilde{P}_\infty$ be the transition function on $H_{\infty}$ associated to \eqref{eq:B:eqn:vel:sft}--\eqref{eq:B:eqn:temp:sft},
and as above, for fixed $\mu\in Pr(H)$, let $\mu P_{\infty}\in Pr (H_{\infty})$ denote the probability measure
$\mu \tilde{P}_{\infty}(\cdot)= \int_{H_\infty}\tilde{P}_{\infty}(x,\cdot)d\mu(x)$.
This proof will proceed by applying Theorem \ref{thm:HMS2011} with $\mathcal{A}=H$ and $\Gamma= \Gamma_{U_0,\tilde{U}_0} :=
\delta_{U_0}P_\infty \times  \delta_{\tilde{U}_0}\tilde{P}_\infty $ for each $U_0,\tilde{U}_0 \in H$.  In order to do this, we first show that, with fixed initial conditions,
the laws of solutions to \eqref{eq:B:eqn:vel:2}--\eqref{eq:theta} and
\eqref{eq:B:eqn:vel:sft}--\eqref{eq:B:eqn:temp:sft} are equivalent.

Recall that $\mathbf{W} = (\tilde{W},W) : \Omega \times (0, \infty) \to \RR^{N_1 + N_2}$ is a standard Wiener process on
a filtered probability space $(\Omega, \mathcal{F}, (\mathcal{F}_t)_{t \geq 0}, \Prb)$.
We define the ``Girsanov shift''
\begin{align*}
a(t) := \begin{pmatrix} \tilde{\sigma}^{-1}\lambda_1 P_{N_1}(\tilde{\bfU}- \bfU) \\
\sigma^{-1}\lambda_2 P_{N_2}(\tilde{\Trs}- \Trs)\end{pmatrix}\mathbbm{1}_{t<\tau_{R}},
\end{align*}
and let
\begin{equation}
\mathbf{W}_0(t) = \mathbf{W}(t) - \int_{0}^{t}a(s) ds.
\end{equation}
We also consider \begin{align}
D(t) := \exp\left(\int_{0}^{t}a(s)\cdot d\mathbf{W}- \frac{1}{2}\int_{0}^{t}\|a(s)\|^{2}ds\right).
\end{align}
We can easily check that $D(t)$ is a continuous martingale, and since $a(t)$ satisfies the Novikov condition (see Proposition 1.15 of \cite{RevuzYor1999}),
\begin{align*}
 \E \exp\left(\frac{1}{2}\int_{0}^{\infty}\|a(s)\|^{2}ds\right)
= \E \exp\left(\frac{1}{2}\int_{0}^{\tau_R}\|a(s)\|^{2}ds\right) \leq e^{\frac{1}{2}R}<\infty,
\end{align*}
it follows that $\{D(t)\}_{t\geq 0}$ is uniformly integrable.  By the Girsanov theorem, (see Propositions 1.1-1.4 of \cite{RevuzYor1999})
there is a probability measure $\mathbb{Q}$ on $(\Omega, \mathcal{F}, (\mathcal{F}_t)_{t \geq 0})$ such that
the Radon-Nikodym derivative
of $\mathbb{Q}$ with respect to $\Prb$ on $\mathcal{F}_{t}$ is $D(t)$, and under $\mathbb{Q}$, $\mathbf{W}_0$ is a standard Wiener process.
Moreover, it holds that $\Prb \sim \mathbb{Q}$ on $\mathcal{F}_{\infty}=\sigma(\cup_{t\geq 0}\mathcal{F}_{t})$.

Let $U=U(t,U_0,\mathbf{W}(t))$ and $\tilde{U}=\tilde{U}(t,U_0,\mathbf{W}(t))$
denote the solutions to \eqref{eq:B:eqn:vel:2}--\eqref{eq:theta} and \eqref{eq:B:eqn:vel:sft}--\eqref{eq:B:eqn:temp:sft}, respectively, at time $t\geq 0$,
with initial condition $U_0$, and with stochastic forcing $\mathbf{W}$.  Then by the uniqueness of solutions (see e.g. Theorem 2.4.6 of \cite{KuksinShirikian12})
we have that, almost surely,
\begin{align}
\label{eq:g:3}
\tilde{U}(\cdot,U_0,\mathbf{W}(\cdot))=U(\cdot,U_0,\mathbf{W}_0(\cdot)).
\end{align}
Next recall that, for each fixed initial condition $U_0$, the solutions of \eqref{eq:B:eqn:vel:2}--\eqref{eq:theta}
and \eqref{eq:B:eqn:vel:sft}--\eqref{eq:B:eqn:temp:sft} induce measurable maps, which we denote by $\Phi_{U_0}$ and $\tilde{\Phi}_{U_0}$, respectively,
from $(\Omega,\mathcal{F},\mathbb{P})$ into $C([0,\infty);H)$.  It follows that the law of solutions to \eqref{eq:B:eqn:vel:2}--\eqref{eq:theta} is given by $(\Phi_{U_0})_{\#}\Prb$,
and similarly, the law associated to \eqref{eq:B:eqn:vel:sft}--\eqref{eq:B:eqn:temp:sft} is $(\tilde{\Phi}_{U_0})_{\#}\Prb$.  By \eqref{eq:g:3},
for any $\mathcal{F}_{\infty}$-measurable $G\subset C([0,\infty);H)$,
\begin{align}
(\tilde{\Phi}_{U_0})_{\#}\Prb(G) = \Prb\left(\tilde{U}(\cdot,U_0,\mathbf{W}(\cdot))\in G\right) = \Prb\left(
\vphantom{\tilde{U}}U(\cdot,U_0,\mathbf{W}_0(\cdot))\in G\right),
\label{eq:g:4}
\end{align}
whereas, since $\mathbf{W}_0$ is a standard Wiener process under $\mathbb{Q}$,
\begin{align}
(\Phi_{U_0})_{\#}\Prb(G) = \Prb\left(\vphantom{\tilde{U}}U(\cdot,U_0,\mathbf{W}(\cdot))\in G\right)
= \mathbb{Q}\left(\vphantom{\tilde{U}}U(\cdot,U_0,\mathbf{W}_0(\cdot))\in G\right).
\label{eq:g:5}
\end{align}
By combining \eqref{eq:g:4} and \eqref{eq:g:5} with $\Prb \sim \mathbb{Q}$ on $\mathcal{F}_{\infty}$, we conclude that
$(\tilde{\Phi}_{U_0})_{\#}\Prb \sim (\Phi_{U_0})_{\#}\Prb$.  That is, with the same initial conditions, the laws of solutions to
\eqref{eq:B:eqn:vel:2}--\eqref{eq:theta}
and \eqref{eq:B:eqn:vel:sft}--\eqref{eq:B:eqn:temp:sft} are equivalent on $C([0,\infty);H)$.

Taking $\mathcal{A}=H$ and setting $\Gamma= \Gamma_{U_0,\tilde{U}_0} :=
\delta_{U_0}P_\infty \times  \delta_{\tilde{U_0}}\tilde{P}_\infty $ for each $\Utot_0,
\tilde{\Utot}_0\in H$, we proceed to check that the assumptions of Theorem \ref{thm:HMS2011} are satisfied.
Clearly $H$ is a Polish space of nonzero measure for every invariant measure.
From what was proven above, we have $  \Pi_2^{\#}\Gamma_{\Utot_0,\tilde{\Utot}_0} = \delta_{\tilde{\Utot}_0} \tilde{P}_{\infty}
\ll \delta_{\tilde{\Utot}_0}P_{\infty}$, and
it follows that $\Gamma_{\Utot_0,\tilde{\Utot}_0} \in
 \tilde{\mathcal{C}}_{\infty}(\delta_{{\Utot}_0},\delta_{\tilde{\Utot}_0})$.
It remains to show that $\Gamma_{\Utot_0,\tilde{\Utot}_0}(D) > 0$.

Set $\Vtot =(\bfV,\Tdiff) := \tilde{\Utot} - \Utot$.  We obtain that
\begin{align}
  \frac{1}{\prN}&(\pd{t} \bfV + \tilde{\bfU} \cdot \nabla \bfV + \bfV \cdot \nabla \bfU) + \nabla p  = \Delta \bfV  + \ra \hate \Tdiff
  - \lambda_1  P_{N_1} \bfV\mathbbm{1}_{t<\tau_{R}},% \quad \nabla \cdot \bfU = 0,
  \label{eq:B:eqn:vel:diff}\\
  &\pd{t} \Tdiff + \tilde{\bfU} \cdot \nabla \Tdiff + \bfV \cdot \nabla \Trs
  = \rab v_d +\Delta \Tdiff - \lambda_2 P_{N_2} \Tdiff\mathbbm{1}_{t<\tau_{R}}.
  \label{eq:B:eqn:temp:diff}
\end{align}
Standard energy estimates yield
\begin{align*}
 \frac{1}{2} \frac{d}{dt} \|\bfV\|^2 +  \prN \|\nabla\bfV\|^2 +
\lambda_1 \prN \mathbbm{1}_{t<\tau_{R}}\| P_{N_1} \bfV \|^2=  \int_{\DD} (\prN \ra  \Tdiff v_d- \bfV \cdot \nabla \bfU \cdot \bfV) dx
\end{align*}
and
\begin{align*}
   \frac{1}{2} \frac{d}{dt} \|\Tdiff\|^2+   \|\nabla \Tdiff\|^2 +  \lambda_2 \mathbbm{1}_{t<\tau_{R}}
    \|P_{N_2} \Tdiff\|^2= \int_{\DD} (\rab v_d \Tdiff - \Tdiff\bfV \cdot \nabla \Trs  )dx.
\end{align*}
That is,
\begin{align}
\frac{1}{2}  \frac{d}{dt} (\|\bfV\|^2 + \|\Tdiff\|^2) +&  \prN \| \nabla \bfV\|^2 +
\| \nabla\Tdiff\|^2 + \lambda_1 \prN \mathbbm{1}_{t<\tau_{R}} \|  P_{N_1}\bfV \|^2 +
 \lambda_2 \mathbbm{1}_{t<\tau_{R}}\| P_{N_2}\Tdiff\|^2 \notag \\
  &\leq
    \int_{\DD} \left( (\rab +  \prN \ra) |\Tdiff v_d| + | \bfV \cdot \nabla \bfU \cdot \bfV| +
    |\Tdiff \bfV \cdot \nabla \Trs |  \right)dx.
\label{eq:energy-est}
\end{align}
We estimate the last two terms on the right-hand side of \eqref{eq:energy-est} as follows
\begin{align*}
\int_{\DD} \left(| \bfV \cdot \nabla \bfU \cdot \bfV| +
    |\Tdiff \bfV \cdot \nabla \Trs |  \right)dx
  &\leq  \| \bfV\|_{L^4}^2 \|\nabla \bfU\|
    +  \|\Tdiff\|_{L^4}\| \bfV\|_{L^4} \|\nabla \Trs\| \\
    &\leq C\left(\| \bfV\| \|\nabla \bfV\|
    \|\nabla\bfU\| +  \| \Tdiff\|^{1/2} \|\nabla \Tdiff \|^{1/2}\| \bfV\|^{1/2}
    \|\nabla \bfV\|^{1/2}\|\nabla \Trs\| \right)
    \\
     &\leq \frac{\prN}{2}\|\nabla \bfV\|^{2} + \frac{1}{2}\|\nabla \Tdiff\|^{2} + \frac{C}{\prN}\| \bfV\|^2\| \nabla\bfU\|^2 +
     \frac{C}{\sqrt{\prN}} \left(\| \Tdiff\|^2+ \|\bfV\|^2\right)
  \|\nabla \Trs\|^2.
\end{align*}
From this estimate and \eqref{eq:energy-est} we obtain
\begin{align}
\frac{1}{2}  \frac{d}{dt} (\|\bfV\|^2 + \|\Tdiff\|^2) +&  \frac{1}{2} (\prN \| \nabla \bfV\|^2 + \| \nabla\Tdiff\|^2)
+ \prN \lambda_1 \mathbbm{1}_{t<\tau_{R}} \| P_{N_1} \bfV \|^2 +
 \lambda_2 \mathbbm{1}_{t<\tau_{R}}\| P_{N_2} \Tdiff\|^2 \notag \\
  &\leq
   (\rab +  \prN \ra) \int_{\DD}  |\Tdiff v_d|dx + \frac{C}{\sqrt{\prN}}(\| \bfV\|^2\| \nabla\bfU\|^2
     + \left(\| \Tdiff\|^2+ \|\bfV\|^2\right)
  \|\nabla \Trs\|^2 ).
\label{eq:energy-est-2}
\end{align}

We proceed to establish part (i) of Theorem \ref{thm:1}.  For this we consider, for each $K>0$,
the events
$$
\tilde{E}_K :=
\left\{\sup_{t\geq 0}\left(\|\bfU(t)\|^{2} + \frac{\prN}{2}\|\Trs(t)\|^2 +\frac{\prN}{2}\int_{0}^{t}
\left(\|\nabla\bfU(s)\|^2 + \frac{1}{2}\|\nabla\Trs(s)\|^2\right) ds -  C_1-C_2t
\right) \leq C_3 K\right\}\,,
$$
where $C_1, C_2, C_3$ are the constants defined in \eqref{eq:def-c1}--\eqref{eq:def-c3}.
By Proposition \ref{lem:exp-bound} we can choose $K = K(\prN, \ra) > 0$ such that $\Prb (\tilde{E}_K) > 0$.
We first show that for already fixed $K > 0$ there exists
$R = R(K, \|\bfU_0\|, \|\Trs_0\|,\|\tilde{\sigma}\|,\prN,\ra,\rab)$, such that $\tau_{R}=\infty$ on $\tilde{E}_K$.
Indeed, assume $\tau_{R}$ is finite for some element of $\tilde{E}_K.$  Then for $t<\tau_{R}$, we obtain
\begin{equation}
\lambda_1\mathbbm{1}_{t<\tau_{R}} \| P_{N_1}  \bfV \|^2 = \lambda_1
\| P_{N_1}  \bfV \|^2\,
\end{equation}
Moreover, for any $\lambda_1 > 0$ (specified below), we take integer $N_1 \geq \sqrt{2\lambda_{1}/C}$, where $C$ is a constant depending only on the domain, such that
\begin{align}\label{eq:ipo}
\lambda_1 \|\bfV\|^2  \leq \lambda_1 \|P_{N_1}\bfV\|^2 + \frac{C\lambda_1}{N_1^{2}}\|\nabla Q_{N_1}\bfV\|^2 \leq \lambda_1 \|P_{N_1}\bfV\|^2 + \frac{1}{2}\|\nabla\bfV\|^2\,,
\end{align}
where we used $\|\nabla Q_{N} v\|^2 \geq \kappa_N \|Q_N v\|^2$, with $\kappa_N$ being the $N^{\textrm{th}}$ eigenvalue of the Stokes
operator with the Dirichlet boundary conditions, and $\kappa_N \approx N^2$.

Similarly, for any $\lambda_2 > 0$, we take $N_2 \geq \sqrt{2\lambda_{2}/C}$
and denote $\lambda:= (\prN\lambda_1)\wedge \lambda_2$.
Then, we have
\begin{align}
\frac{1}{2}  \frac{d}{dt} (\|\bfV\|^2 + \|\Tdiff\|^2) +&
 \lambda ( \| \bfV \|^2 +
  \| \Tdiff\|^2 ) \notag \\
  &\leq
    (\rab +  \prN \ra)\int_{\DD}  |\Tdiff v_d|dx + \frac{C}{\sqrt{\prN}} (\| \bfV\|^2\| \nabla\bfU\|^2
     + \left(\| \Tdiff\|^2+ \|\bfV\|^2\right)
  \|\nabla \Trs\|^2).
\label{eq:energy-est-3}
\end{align}
Using
\begin{align*}
 \int_{\DD} |\Tdiff v_d|dx \leq
\| \Tdiff\|^2 + \|\bfV\|^{2},
\end{align*}
we obtain, under the condition $\lambda \geq 2(\rab + \prN\ra)$,
\begin{align}
\frac{d}{dt} (\|\bfV\|^2 + \|\Tdiff\|^2) +  \lambda\left( \| \bfV \|^2 +
 \| \Tdiff\|^2\right)
  & \leq   \frac{C}{\sqrt{\prN}} \left(\| \Tdiff\|^2+ \|\bfV\|^2\right)
  \left(\| \nabla\bfU\|^2 + \|\nabla \Trs\|^2\right)\,.
\label{eq:energy-est-5}
\end{align}
Then by Gr\"{o}nwall's inequality,
\begin{align*}
\|\bfV(t)\|^2 + \|\Tdiff(t)\|^2 \leq (\|\bfV_{0}\|^2 + \|\Tdiff_{0}\|^2)\exp\left(\frac{C}{\sqrt{\prN}} \int_{0}^{t}(\|\nabla \bfU(s)\|^2 + \frac{1}{2}\|\nabla \Trs(s)\|^2)ds-\lambda t\right)
\end{align*}
and thus on $\tilde{E}_{K}$,
\begin{align}
\label{eq:erg:1}
\|\bfV(t)\|^2 + \|\Tdiff(t)\|^2 \leq (\|\bfV_{0}\|^2 + \|\Tdiff_{0}\|^2)
\exp\left(\frac{C}{\prN^{3/2}}(C_3 K + C_1 + C_2t) - \lambda t\right).
\end{align}
It follows that, for $t<\tau_R$, by fixing
$\lambda>0$ such that $\lambda > \frac{ C C_2}{\prN^{3/2}}$, we have
\begin{align*}
\int_{0}^{t}&\left(\|\lambda_1\tilde{\sigma}^{-1}P_{N_1}\bfV(s)\|^2
+ \|\lambda_2\sigma^{-1}P_{N_2}\Tdiff(s)\|^2\right)ds
\\
&\leq \tilde{C\lambda}(\|\bfV_{0}\|^2 + \|\Tdiff_{0}\|^2)
\exp\left(\frac{C}{\prN^{3/2}}(C_3 K + C_1)\right)\int_{0}^{t}\exp\left(\left(\frac{C C_2}{\prN^{3/2}}-\lambda\right)s\right)ds
\\
&\leq  \tilde{C}(\|\bfV_{0}\|^2 + \|\Tdiff_{0}\|^2)
\exp\left(\frac{C}{\prN^{3/2}}(C_3 K + C_1)\right),
\end{align*}
where $\tilde{C} = \tilde{C}(\prN,\|\tilde{\sigma}\|)$ is, in particular, independent of $R$.
Choose $R>0$ such that
\begin{align}
R > 2\tilde{C}(\|\bfV_{0}\|^2 + \|\Tdiff_{0}\|^2)
\exp\left(\frac{C}{\prN^{3/2}}(C_3 K + C_1)\right).
\label{eq:erg:2}
\end{align}
Then, for $0<t<\tau_{R}$, we have
\begin{align*}
\int_{0}^{t}&\left(\|\lambda_1\tilde{\sigma}^{-1}P_{N_1}\bfV(s)\|^2 + \|\lambda_2\sigma^{-1}P_{N_2}\Tdiff(s)\|^2\right)ds < \frac{R}{2},
\end{align*}
which contradicts the assumption that $\tau_{R}$ is finite.

It follows that for these choices of $K$, $R$, and $\lambda,$ one has that on $\tilde{E}_K$,
the estimate
\begin{align*}
\|\bfV(t)\|^2 + \|\Tdiff(t)\|^2 \leq C\exp\left(\left(\frac{C C_2}{\prN^{3/2}}-\lambda\right)t\right)
\end{align*}
 holds for all $t>0$, and therefore
 \begin{align}
 \tilde{E}_{K}\subset\left\{\{(\Utot(n,\Utot_0),
 \tilde{\Utot}(n,\tilde{\Utot}_0))\}_{n\in\NN}\in D \right\},
 \label{eq:diag_estimate}
 \end{align}
 where $D=\{(v,w)\in H_{\infty}\times H_{\infty}:
 \lim_{n\rightarrow \infty}\|v_{n}-w_n\|=0\}$.  Moreover, from \eqref{eq:diag_estimate}, Proposition
 \ref{lem:exp-bound}, and our choice of $K$, it follows that
 \begin{align}
 \label{eq:diag_2}
 \Gamma_{\Utot_0,\tilde{\Utot}_0}(D)=P\left(\{(\Utot(n,\Utot_0),
 \tilde{\Utot}(n,\tilde{\Utot}_0))\}_{n\in\NN}\in D \right)
 \geq P(\tilde{E}_{K})>0 \,.
 \end{align}
By Theorem \ref{thm:HMS2011} the proof of part (i) of Theorem \ref{thm:1} is complete.

We proceed to establish part (ii) of Theorem \ref{thm:1}, for which we will take $\lambda_1 =0$.

Once again, with an appropriate choice of $R$, we will show that $\tau_{R}=\infty$ on $\tilde{E}_K.$
Indeed, assume $\tau_{R}$ is finite for some element of $\tilde{E}_K$. Then for $t<\tau_{R}$,  $N_2 \geq \sqrt{2\lambda_2/C}>0$,
we have from \eqref{eq:energy-est-2}, using the inverse Poicar\'{e} inequality as before, that
\begin{align*}
\frac{1}{2}  \frac{d}{dt} (\|\bfV\|^2 + \|\Tdiff\|^2) +&
 \frac{\prN}{2} \| \bfV \|^2 +
  \lambda_{2}\| \Tdiff\|^2   \\
  &\leq
    (\rab +  \prN \ra)\int_{\DD}  |\Tdiff v_d|dx + \frac{C}{\sqrt{\prN}} (\| \bfV\|^2\| \nabla\bfU\|^2
     + \left(\| \Tdiff\|^2+ \|\bfV\|^2\right)
  \|\nabla \Trs\|^2)  \\
 &\leq
  \frac{\prN}{4}\|\bfV\|^{2} + \frac{C(\rab +  \prN \ra)^2}{\prN}\|\Tdiff\|^{2} + \frac{C}{\sqrt{\prN}} (\| \bfV\|^2\| \nabla\bfU\|^2
     + \left(\| \Tdiff\|^2+ \|\bfV\|^2\right)
  \|\nabla \Trs\|^2).
\end{align*}
That is, with $\lambda_2 \geq 2C(\rab +  \prN \ra)^2/\prN$, taking $\lambda = (\frac{1}{2}\prN)\wedge \lambda_2$, we have
\begin{align}
\frac{d}{dt} (\|\bfV\|^2 + \|\Tdiff\|^2) +  \lambda\left( \| \bfV \|^2 +
 \| \Tdiff\|^2\right)
  & \leq   \frac{C}{\sqrt{\prN}} \left(\| \Tdiff\|^2+ \|\bfV\|^2\right)
  \left(\| \nabla\bfU\|^2 + \|\nabla \Trs\|^2\right)\,,
\label{eq:energy-est-10}
\end{align}
as in \eqref{eq:energy-est-5} above.  The rest of the proof follows
 the proof of part (i) above.  Note that, from \eqref{eq:def-c2} (specifically from the dependence of $C_2$ on $\prN$),
 we can choose $\prN$ large enough such that
$\lambda = (\frac{1}{2}\prN)\wedge \lambda_2 > \frac{ C C_2}{\prN^{3/2}}$, as required in the proof.

\end{proof}

\subsection{Proof of Theorem~\ref{thm:2}}

\begin{proof}[Proof of Theorem~\ref{thm:2}]  We write $H=H_2$ as the initial data phase space for \eqref{eq:vel:inf}--\eqref{eq:theta:inf}
during this proof.  We assume that the spatial dimension is $d=3$, as the proof when $d=2$ is nearly identical.
Let $P_\infty $  be the transition function on $H_{\infty}$ corresponding to equations \eqref{eq:vel:inf}--\eqref{eq:theta:inf} evaluated at integer times.
For given $U^0=(\Uo,\Tho)$ satisfying \eqref{eq:vel:inf}--\eqref{eq:theta:inf} and initial condition $\tilde{\Trs_0} \in H$, define $\tilde{U}=(\tilde{\bfU},\tilde{\Trs})$ as the solution of
\begin{align}
 - \Delta \tilde{\bfU} + \nabla \tilde{p} &=  \ra\hate \tilde{\Trs}, \quad \nabla \cdot \tilde{\bfU} = 0,\label{eq:mod:inf:vel} \\
  d \tilde{\Trs} + \tilde{\bfU} \cdot \nabla \tilde{\Trs} dt &= \rab \tilde{u}_d dt+  \Delta \tilde{\Trs} dt
  +  \sum_{k=1}^{N_2} \sigma_k dW^k - \lambda_2 P_{N_2} (\tilde{\Trs}- \Tho)\mathbbm{1}_{t<\tilde{\tau}_R}dt \,
  \label{eq:mod:inf:temp}
\end{align}
where
\begin{align}
\label{eq:stop:2}
\tilde{\tau}_R = \inf\left\{t>0:\int_{0}^{t} \|\sigma^{-1}\lambda_2 P_{N_2}(\tilde{\Trs}- \Tho)\|^2 ds \geq R
\right\}.
\end{align}
As in the proof of Theorem \ref{thm:1}, let $\tilde{P}_\infty$ be the transition function on $H_{\infty}$ corresponding to equations
\eqref{eq:mod:inf:vel} and \eqref{eq:mod:inf:temp}.
Again, by the Girsanov theorem, for any initial data $\tilde{\Trs}_0 \in H$,
one has $\Gamma_{\Tho_0, \tilde{\Trs}_0} \in
 \tilde{C}_{\infty}(\delta_{\Tho_0}, \delta_{\tilde{\Trs}_0})$
 with $\Gamma_{\Tho_0, \tilde{\Trs}_0} = \delta_{\Tho_0}P_\infty  \times
  \delta_{\tilde{\Trs}_0}\tilde{P}_\infty$.  Following the discussion in the proof of
 Theorem \ref{thm:1} and using Theorem \ref{thm:HMS2011},
 it remains to prove that $\Gamma_{\Tho_0, \tilde{\Trs}_0}(D)>0$.

For $\Vtot = \tilde{\Utot} - \Utot^0=(\bfV,\Tdiff)$, we obtain
\begin{align}
  & -\Delta \bfV  + \nabla p  =  \ra \hate \Tdiff,% \quad \nabla \cdot \Uo = 0,
  \label{eq:diff:inf:vel}\\
  &\pd{t} \Tdiff + \tilde{\bfU} \cdot \nabla \Tdiff + \bfV \cdot \nabla \Tho = v_d +\Delta \Tdiff
  - \lambda_2 P_{N_2} \Tdiff \mathbbm{1}_{t<\tilde{\tau}_R}.
  \label{eq:diff:inf:temp}
\end{align}
Energy estimates yield
\begin{align}
\|\nabla \bfV\|^2 &\leq \ra^2 \|\Tdiff\|^2 \,, \notag \\
   \frac{1}{2} \frac{d}{dt} \|\Tdiff\|^2+   \|\nabla \Tdiff\|^2 +
   \lambda_2 \|P_{N_2} \Tdiff\|^2\mathbbm{1}_{t<\tilde{\tau}_R}
   &\leq \int_{\DD} (\rab |v_d \Tdiff| + |\Tdiff\bfV \cdot \nabla \Tho|)dx,
\label{eq:inf:en}
\end{align}
and we find
\begin{align*}
\int_{\DD} |\Tdiff\bfV \cdot \nabla \Tho|dx
   &\leq   C\|\bfV\|_{L^6}\|\nabla \Tho\|\|\Tdiff\|_{L^3}  \leq   C\|\nabla \bfV\|\|\nabla \Tho\|\|\Tdiff\|^{1/2}\|\nabla\Tdiff\|^{1/2} \\
   &\leq   C\ra\|\Tdiff\|^{3/2}\|\nabla \Tho\|\|\nabla\Tdiff\|^{1/2} \leq   \frac{1}{2}\|\nabla \Tdiff\|^{2} + C \ra^{4/3}\|\Tdiff\|^{2}\|\nabla\Tho\|^{4/3} \\
   &\leq   \frac{1}{2}\|\nabla \Tdiff\|^{2} + C \ra^{4} \|\Tdiff\|^{2} + \frac{1}{2}\|\nabla\Tho\|^{2}\|\Tdiff\|^{2}.
\end{align*}
Combining with \eqref{eq:inf:en} we obtain
\begin{align}
 \frac{d}{dt} \|\Tdiff\|^2 + \|\nabla \Tdiff\|^2 +  2\lambda_2 \|P_{N_2} \Tdiff\|^2\mathbbm{1}_{t<\tilde{\tau}_R}
    \leq (C \ra^{4} + 2\ra\rab)\|\Tdiff\|^{2} + \|\nabla\Tho\|^{2}\|\Tdiff\|^{2}.
\label{eq:inf:en:2}
\end{align}
Let
$$
\tilde{A}_K=
\left\{\sup_{t\geq 0}\left(\|\Tho(t)\|^2 + \int_{0}^{t}
\|\nabla\Tho(s)\|^2 ds -  \tilde{C}_1-\tilde{C}_2t
\right) \leq \tilde{C}_3 K\right\}\,,
$$
where $\tilde{C}_1, \tilde{C}_2, \tilde{C}_3$ are the constants determined in Proposition \ref{lem:exp-bound-pr}.
With an appropriate choice of $R$, we will show that $\tilde{\tau}_R=\infty$ on $\tilde{A}_K.$
Indeed, assume $\tilde{\tau}_R$ is finite for some element of $\tilde{A}_K.$
Then for $t<\tilde{\tau}_R,$ by choosing $N_2 \geq \sqrt{\lambda_{2}/C}$, where $C$ is such that
$2\lambda_2 \|\Tdiff\|^2  \leq 2\lambda_2 \|P_{N_2}\Tdiff\|^2 + \frac{1}{2}\|\nabla\Tdiff\|^2$ (cf. \eqref{eq:ipo}),
 we obtain from
\eqref{eq:inf:en:2} that
\begin{align}
 \frac{d}{dt} \|\Tdiff\|^2+   \frac{1}{2}\|\nabla \Tdiff\|^2 +  2\lambda_2 \|\Tdiff\|^2
    \leq (C\ra^{4} + 2\ra\rab)\|\Tdiff\|^{2} + \|\nabla\Trs\|^{2}\|\Tdiff\|^{2}.
\label{eq:inf:en:3}
\end{align}
Now by selecting $\lambda_2> (C \ra^{4} + 2\ra\rab)\vee \tilde{C}_2 $ we find
\begin{align}
 \frac{d}{dt} \|\Tdiff\|^2+
   \lambda_2 \|\Tdiff\|^2
    \leq \|\nabla\Trs\|^{2}\|\Tdiff\|^{2},
\label{eq:inf:en:4}
\end{align}
and by Gr\"{o}nwall's inequality,
\begin{align*}
\|\Tdiff(t)\|^2 \leq  \|\Tdiff_{0}\|^2\exp\left(\int_{0}^{t}\|\nabla \Trs(s)\|^2ds-\lambda_2 t\right).
\end{align*}
Therefore, on $\tilde{A}_{K}$,
\begin{align}
\|\Tdiff(t)\|^2 \leq  \|\Tdiff_{0}\|^2\exp\left(\tilde{C}_3K +\tilde{C}_1\right)\exp\left((\tilde{C}_{2}-\lambda_2) t\right)\,.
\label{eq:Tbound6}
\end{align}
It follows that, for all $t<\tilde{\tau}_R$,
\begin{align*}
\int_{0}^{t} \|\sigma^{-1}\lambda _2 P_{N_2}\phi\|^2 ds &\leq
\lambda_2\tilde{C}\|\Tdiff_{0}\|^2\exp\left(\tilde{C}_{3}K+\tilde{C}_1\right)\int_{0}^{t}\exp\left((\tilde{C}_{2}-\lambda_2) s\right)ds
\\
&\leq \tilde{C}_0\|\Tdiff_{0}\|^2\exp\left(\tilde{C}_{3}K+\tilde{C}_1\right),
\end{align*}
where $\tilde{C}_0=\tilde{C}_0(\tilde{C}_{2})$.  Choose $R>0$ such that
$ R^2 > 2\tilde{C}_0\|\Tdiff_{0}\|^2
\exp\left(\tilde{C}_{3}K+C_1\right)$.
Then, since $\lambda_2  \geq \tilde{C}_2$ for all $0<t<\tilde{\tau}_R,$ we have
\begin{align}
\label{eq:attractor2}
\int_{0}^{t} \|\sigma^{-1}\lambda _2 P_{N_2}\phi\|^2 ds < \frac{R}{2},
\end{align}
which contradicts the assumption that $\tilde{\tau}_R$ is finite.  Therefore, for these choices of $K$, $R$ and $\lambda_2$, we have that on $\tilde{A}_{K}$, the estimate \eqref{eq:Tbound6} holds for all $t>0$,
and the remainder of the proof that $\Gamma_{\Tho_0, \tilde{\Trs}_0}(D)>0$ follows as in \eqref{eq:diag_estimate}--\eqref{eq:diag_2} above.
\end{proof}

\section{Bounds on the Nusselt Number}
\label{sec:5}

In this section we include the proof of Theorem \ref{thm:3}, which states that the Nusselt number $\Nu$ relative to the unique
invariant measure of \eqref{eq:B:eqn:vel}--\eqref{eq:B:eqn:temp} is observable, and provides quantitative bounds in terms of $\ra$, $\rab$.
Throughout this section, we assume $d=2$ and for a function $f=f(x_1,x_2,t)$ we use the notation
\begin{align}
\langle f\rangle = \langle f\rangle(x_1,x_2) = \lim_{t \rightarrow \infty}\frac{1}{t}\int_{0}^{t}f(t,x_1,x_2)dt
\label{eq:avg}
\end{align}
to denote the infinite temporal average.  We will also use the symbol
\begin{align*}
\overline{f}=\overline{f}(t,x_2)=\frac{1}{|\mathcal{D}|}\int_{0}^{L}f(t,x_1,x_2)dx_1
\end{align*}
 to refer to the horizontal average of $f$, and $\fint_{\mathcal{D}}dx$ for averages over $\mathcal{D}$.

\begin{proof}[Proof of Theorem \ref{thm:3}]
We assume $d=2$, $N_1=0$, $N_2=\infty$, and $\prN$ is sufficiently large such that, by Theorem \ref{thm:1}, the system \eqref{eq:B:eqn:vel}--\eqref{eq:B:eqn:temp}
possesses a unique ergodic invariant measure $\mu$.  Recall that the Nusselt number $\Nu$, relative to $\mu$, is given by \eqref{eq:nu:0}.
This notation is imprecise, and we should now interpret \eqref{eq:nu:0} according to \eqref{def:theta} as
\begin{align}
Nu = 1 + \int_{H}u_2 (\theta+\rab(1-x_2))d\mu(\bfU,\theta).
\end{align}
Next consider $f:H\rightarrow \RR$ given by
\begin{align*}
f(\tilde{\bfU},\tilde{\theta})=\int_{\mathcal{D}}\tilde{u}_2 (\tilde{\theta}+\rab(1-x_2))dx.
\end{align*}
Notice that from \eqref{eq:exp:your:mom:invar:measure} we have $f\in L^1(H;\mu)$.  Indeed,
\begin{align*}
\int_{H}\left(\int_{\mathcal{D}}\tilde{u}_2 (\tilde{\theta}+\rab(1-x_2))dx\right)d\mu(\tilde{\bfU},\tilde{\theta}) \leq
\frac{1}{2}\int_{H}\left(\|\tilde{\bfU}\|^{2} + \|\tilde{\theta}\|^{2} + \tilde{C}\right)d\mu(\tilde{\bfU},\tilde{\theta})<\infty,
\end{align*}
where $\tilde{C}=\tilde{C}(\rab)$.
By the Birkhoff ergodic theorem it follows that, for $\mu$ almost every initial condition
$(\bfU_0,\theta_0)\in H$, recalling \eqref{eq:nu:0},
\begin{align}
\label{eq:Tbound5}
\Nu &=  1 + \frac{1}{\rab}\int_{H}
\fint_{\mathcal{D}}\tilde{u}_2 (\tilde{\theta}+\rab(1-x_2))\, dx\,d\mu(\tilde{\bfU},\tilde{T})
 = 1 + \frac{1}{\rab}\left\langle\E\left(\fint_{\mathcal{D}}u_2 (\theta+\rab(1-x_2)) dx \right)\right\rangle  \notag \\
 &= 1 + \frac{1}{\rab}\left\langle\E\left(\fint_{\mathcal{D}}u_2 T dx \right)\right\rangle
\end{align}
where $(\bfU,T)$ is the solution to \eqref{eq:B:eqn:vel}--\eqref{eq:B:eqn:temp} with initial data $(\bfU_0,T_0)$
(note that $\theta_0$ and $T_0$ are related by \eqref{def:theta}).
It remains to establish part (ii) of Theorem \ref{thm:3}.

We follow the method of \cite{ConstantinDoering1996}.
Fix initial conditions $(\bfU_0,T_0)\in \text{supp}(\mu)$ such that \eqref{eq:Tbound5} is satisfied.
Notice that from \eqref{eq:B:eqn:temp} and integration by parts, recalling that $\|\sigma\|=1$, we have
\begin{align}
\label{eq:Tbound3}
d\|T\|^{2} &= \left(2\int_{\mathcal{D}}\Delta T Tdx  + 1\right)dt + 2\langle T,\sigma\rangle dW \notag \\
&=  \left(-2\|\nabla T\|^{2} - 2|\mathcal{D}|\rab \overline{\pd{2}T}|_{x_2=0} + 1\right)dt + 2\langle T,\sigma\rangle dW.
\end{align}
Also, by Proposition \ref{lem:exp-bound} (specifically \eqref{eq:exp:est:2}),
\begin{align}
\label{eq:Tbound4}
\frac{1}{t}\E (\|T(t)\|^{2} - \|T_0\|^{2}) \leq \frac{C}{t} \rightarrow 0,
\end{align}
as $t\rightarrow \infty$.
We then take the expectation of the integrated form of \eqref{eq:Tbound3}, and conclude by \eqref{eq:Tbound4} that
\begin{align}
\label{eq:nu:1}
\frac{1}{|\mathcal{D}|}\langle\E\|\nabla T\|^{2}\rangle = -\rab \langle\E\left(\overline{\pd{2}T}|_{x_2=0}\right)\rangle + \frac{1}{2|\mathcal{D}|}.
\end{align}
Next observe that \eqref{eq:B:eqn:temp} also gives
\begin{align*}
d\overline{T} = \left(\overline{\pd{2}^2 T}  -\overline{\pd{2}(u_2 T)}\right)dt + \overline{\sigma} dW,
\end{align*}
and therefore
\begin{align}
\label{eq:nu:2}
(1-x_2) d\overline{T}=(1-x_2)\left(\overline{\pd{2}^2 T} - \overline{\pd{2}(u_2 T})\right)dt + (1-x_2)\overline{\sigma} dW.
\end{align}
We now integrate \eqref{eq:nu:2} in $x_2$, and integrate by parts, to obtain
\begin{align}\label{eq:nu:3}
d\left(\int_{0}^{1}(1-x_2)\overline{T}dx_2 \right)
&=  \left(\int_{0}^{1} (1-x_2)\overline{\pd{2}^{2}T}dx_2 + \fint_{\mathcal{D}}(1-x_2)\pd{2}(u_2 T)dx\right)dt + \fint_{\mathcal{D}}(1-x_2)\sigma dW \notag \\
&=  \left( -\overline{\pd{2}T}|_{x_2=0} + \int_{0}^{1} \overline{\pd{2}T}dx_2 - \fint_{\mathcal{D}}u_2 T dx\right)dt + \fint_{\mathcal{D}}(1-x_2)\sigma dW \notag \\
&=  \left( -\overline{\pd{2}T}|_{x_2=0} - \rab - \fint_{\mathcal{D}}u_2 T dx\right)dt + \fint_{\mathcal{D}}(1-x_2)\sigma dW.
\end{align}
Note that we have used the boundary conditions \eqref{eq:bc} in the last line.
By taking the expectation and infinite temporal average, we have by Proposition \ref{lem:exp-bound}, another integration by parts, and
\eqref{eq:Tbound5},
\begin{align*}
 -\left\langle\E\left(\overline{\pd{2}T}|_{x_2=0}\right)\right\rangle - \rab
 = \left\langle\E\fint_{\mathcal{D}}u_2 T dx\right\rangle = \rab (\Nu - 1).
\end{align*}
Therefore
\begin{align*}
\rab\Nu =  -\left\langle\E\left(\overline{\pd{2}T}|_{x_2=0}\right)\right\rangle,
\end{align*}
and by \eqref{eq:nu:1},
\begin{align*}
\frac{1}{|\mathcal{D}|}\langle\E\|\nabla T\|^{2}\rangle = -\rab \left\langle\E\left(\overline{\pd{2}T}|_{x_2=0}\right)\right\rangle
+ \frac{1}{2|\mathcal{D}|} = \rab^2\Nu + \frac{1}{2|\mathcal{D}|},
\end{align*}
or equivalently,
\begin{align}
\label{eq:nu:4}
\Nu = \frac{1}{\rab^2|\mathcal{D}|}\langle\E\|\nabla T\|^{2}\rangle -  \frac{1}{2\rab^2|\mathcal{D}|}.
\end{align}
Next observe that \eqref{eq:B:eqn:vel} combined with Proposition \ref{lem:exp-bound} and \eqref{eq:Tbound5} gives
\begin{align*}
\frac{1}{|\mathcal{D}|}\left\langle\E \|\nabla \bfU\|^{2}\right\rangle = \ra \left\langle \E\fint_{\mathcal{D}}u_2 T \,dx\right\rangle = \ra\rab (\Nu - 1),
\end{align*}
so we also have
\begin{align}
\label{eq:nu:5}
\Nu = \frac{1}{\ra\rab|\mathcal{D}|}\left\langle\E\|\nabla \bfU\|^2\right\rangle + 1.
\end{align}
We multiply \eqref{eq:nu:5} by $-1/2$ and add it to \eqref{eq:nu:4} to obtain
\begin{align}
\label{eq:nu:6}
\Nu = \frac{2}{\rab^2|\mathcal{D}|}\langle\E\|\nabla T\|^{2}\rangle  -
\frac{1}{\ra\rab|\mathcal{D}|}\left\langle\E\|\nabla \bfU\|^2\right\rangle -  \frac{1}{\rab^2|\mathcal{D}|} - 1.
\end{align}

Consider a deterministic background profile $\tau = \tau(x_2)$ satisfying $\tau(0)=\rab$, $\tau(1)=0$ and consider $T(t,x)=\tau(x_2) + \theta(t,x)$.
Then the fluctuations
$\theta$ satisfy
\begin{align*}
d\theta + (\bfU \cdot \nabla \theta - \Delta \theta - \tau'' + u_2 \tau')dt = \sigma dW,
\end{align*}
with Dirichlet boundary conditions at $x_2=0,1$.  This gives
\begin{align*}
d\|\theta\|^{2} &= \left(2\int_{\mathcal{D}}(\Delta \theta \theta   + \tau''\theta  - u_2 \tau'\theta)dx + 1\right)dt + 2\langle T,\sigma\rangle
dW \\
&=  \left(-2\|\nabla \theta\|^{2} - 2\int_{\mathcal{D}}(\tau'\pd{2}\theta + u_2 \tau'\theta)dx + 1\right)dt + 2\langle T,\sigma\rangle
dW .
\end{align*}
By taking the expectation and infinite temporal average, we have again by Proposition \ref{lem:exp-bound},
\begin{align}
\label{eq:nu:7}
-\frac{1}{|\mathcal{D}|}\left\langle\E\|\nabla \theta\|^{2}\right\rangle
 - \left\langle\E\fint_{\mathcal{D}}(\tau'\pd{2}\theta + u_2 \tau'\theta)dx\right\rangle + \frac{1}{2|\mathcal{D}|}=0.
\end{align}
Next we expand
\begin{align}
\label{eq:nu:8}
\|\nabla T\|^{2} = \|\nabla \theta\|^{2} + 2\int_{\mathcal{D}}\tau' \pd{2}\theta dx + |\mathcal{D}|\int_{0}^{1}(\tau')^2 dx_2.
\end{align}
We take the expectation and temporal average of \eqref{eq:nu:8}, and add \eqref{eq:nu:7} twice to obtain
\begin{align}
\frac{1}{|\mathcal{D}|}\left\langle\E\|\nabla T\|^{2}\right\rangle = -
\frac{1}{|\mathcal{D}|}\left\langle\E\|\nabla \theta\|^{2}\right\rangle + \int_{0}^{1}(\tau')^2 dx_2
-2\left\langle\E\fint_{\mathcal{D}}u_2 \tau'\theta dx\right\rangle + \frac{1}{|\mathcal{D}|}.
\label{eq:nu:9}
\end{align}
Inserting \eqref{eq:nu:9} into \eqref{eq:nu:6} we find
\begin{align}
\label{eq:nu:10}
\Nu
&= \frac{2}{\rab^2}\int_{0}^{1}(\tau')^2 dx_2
+\frac{1}{\rab^2|\mathcal{D}|}-1 -\frac{2}{\rab^2|\mathcal{D}|}Q_{\tau}(\theta,\bfU),
\end{align}
where
\begin{align}
Q_{\tau}(\theta,\bfU) = \left\langle\E\left(\|\nabla \theta\|^{2} + \frac{\rab}{2\ra}\|\nabla \bfU\|^2 +
2\int_{\mathcal{D}}u_2 \tau'\theta dx\right)\right\rangle.
\label{eq:nu:11}
\end{align}

Consider a background temperature given by
\begin{align*}
\tau  = \tau(x_2)=\rab - \frac{\rab}{\delta}\int_{0}^{x_2}\psi(x/\delta)dx,
\end{align*}
where $\psi=\psi(x)$ is a smooth function on $\mathbb{R}$ such that $0\leq \psi\leq C$, $\text{supp}(\psi) \subset [0,1]$
and $\int_{\mathbb{R}}\psi(x)dx = 1$.  It follows that, for $\delta<1$,
$\tau(0)=\rab$ and $\tau(1)=0$.  Next, observe that for any $x_2\in[0,1]$, we can exploit the Dirichlet
boundary condition at $x_2=0$ and write
\begin{align*}
|\overline{u_2 \theta}(x_2) | &= \left|\int_{0}^{x_2}\overline{u_2 \theta}'(z)dz  \right|
=  \frac{1}{|\mathcal{D}|}\left|\int_{0}^{L}\int_{0}^{x_2}\left( \theta\pd{2}u_2 + u_2  \pd{2}\theta\right)(x_1,z)dz dx_1\right|
\notag \\
&=  \frac{1}{|\mathcal{D}|}\left|\int_{0}^{L}\int_{0}^{x_2}\left[\left(\int_{0}^{z}\pd{2}\theta(x_1,y)dy\right)\pd{2}u_2(x_1,z)
+ \left(\int_{0}^{z}\pd{2}u_2(x_1,y)dy\right) \pd{2}
\theta(x_1,z)\right] dz dx_1\right| \notag \\
&\leq |x_2|\|\nabla u\|\|\nabla \theta\|,
\end{align*}
where we have applied Cauchy-Schwarz in each variable to obtain the last line.
It follows that
\begin{align*}
\left|\int_{\mathcal{D}}u_2 \tau'\theta dx\right| &= \left|\int_{0}^{1}\overline{u_2 \theta}(x_2)\tau'(x_2)dx_2\right|
\leq\rab\|\nabla u\|\|\nabla \theta\|\int_{0}^{1}|x_2/\delta|\psi(x_2/\delta)dx_2 \\
&= \rab\|\nabla u\|\|\nabla \theta\|\delta\int_{0}^{1}|z|\psi(z)dz \leq  \sqrt{\rab/2\ra}\|\nabla u\|\|\nabla \theta\|,
\end{align*}
by taking $\delta \sim 1/\sqrt{2\rab\ra}$.  With this choice for $\tau$, by \eqref{eq:nu:11} we have
\begin{align*}
Q_{\tau}(\theta,\bfU) \geq
\left\langle\E\left[\left(\|\nabla \theta\| - \sqrt{\rab/2\ra}\|\nabla \bfU\|\right)^{2}\right]\right\rangle \geq 0,
\end{align*}
for all $(\theta,\bfU)\in H$, and therefore
\begin{align}
\label{eq:nu:bound}
\Nu &\leq \frac{2}{\rab^2}\int_{0}^{1}(\tau')^2 dx_2 +\frac{1}{\rab^2|\mathcal{D}|}-1
= \frac{2}{\delta^2}\int_{0}^{1}(\psi(x_2/\delta))^2 dx_2 +\frac{1}{\rab^2|\mathcal{D}|}-1 \notag \\
&= \frac{2}{\delta}\int_{0}^{1}(\psi(z))^2 dz +\frac{1}{\rab^2|\mathcal{D}|}-1 \notag \\
&\leq  C\sqrt{\rab\ra} +\frac{1}{\rab^2|\mathcal{D}|}-1.
\end{align}
\end{proof}

\begin{Rmk}
Notice that, due to the stochastic forcing, our bound on the Nusselt number in \eqref{eq:nu:bound}
includes an ``It\^{o} correction'' term of the form $\frac{1}{\rab^2|\mathcal{D}|}$.  Further note that,
from \eqref{eq:phys}, by keeping all other parameters fixed and taking the stochastic heat flux $\gamma$ large,
$\ra\rab$ is constant and $\rab$ goes to zero, so that this extra term dominates the right-hand side of \eqref{eq:nu:bound}, and
 we have weaker estimates on the Nusselt number $Nu$.
 We do not claim that this reflects an enhanced convective heat transport
 for \eqref{eq:B:eqn:vel}--\eqref{eq:B:eqn:temp} as $\gamma$ increases,
 but rather that our mathematical methods produce an extra term in this case, which
may or may not be misleading.
\end{Rmk}

\appendix

\section{Appendix: Non-dimensionalization of the Equations}
\label{sec:non-dim:equations}

In this appendix we will describe how to obtain the non-dimensional system \eqref{eq:B:eqn:vel}--\eqref{eq:B:eqn:temp} from
the standard form of these equations through a rescaling.  We also derive the system considered in \cite{FoldesGlattHoltzRichardsThomann2014b}.
The Boussinesq equations in their original form can be written (with
stochastic forcing included) as
\begin{align}
&d\vstar + \left(\vstar \cdot \nabla \vstar + \nabla \pstar\right)d\tstar = (g\astar\hate \Tstar  + \nu \Delta \vstar)d\tstar
+ \tilde{\gamma}\sum_{k=1}^{N_1}\sigmatildestar_k d\tildeWstar^k \,, \quad \nabla \cdot \vstar = 0,
\label{eq:Vel:Physical:short}\\
&d\Tstar + \left( \vstar\cdot\nabla \Tstar \right)d\tstar = \kappa \Delta \Tstar d\tstar + \gamma\sum_{k=1}^{N_2}\sigmastar_k d\Wstar^k .
\label{eq:Temp:Physical:short}
\end{align}
We denote the spatial variable by $\mathbf{x}^* = (x_1^*,\ldots,x_d^*)$, where $d=2$ or $d=3$, assume that $\vstar = (v_1,\ldots,v_d)$
and $\Tstar$ are periodic in $(x^*_1,\ldots,x^*_{d-1})$, and that
$$
\vstar|_{x_d^*=0} = \vstar|_{x_d^*=h} = 0,~~~
\Tstar|_{x_d^*=0}=T_1,~~ \Tstar|_{x_d^*=h}=0,$$
where $T_1>0.$
In these equations, $g$ represents
the gravitational acceleration, $\nu$ the kinematic viscosity, $\kappa$
the thermal conductivity, $\astar$ the thermal expansion coefficient,
and $\tilde{\gamma},\gamma$ the volumetric flux coefficients, which will be specified in more detail
depending on context below.
Here $\sigmatildestar_k$ and $\sigmastar_k$ denote bases of
eigenfunctions of the (time independent) Stokes and Laplace operators, respectively, with appropriate
boundary conditions, and $\tildeWstar^k,\Wstar^k$ are sequences of one-dimensional,
mutually independent, standard Brownian motions.

We abuse notation for computational purposes, and rewrite \eqref{eq:Vel:Physical:short}-\eqref{eq:Temp:Physical:short} using standard derivative notation
\begin{align}
&\frac{\partial\vstar}{\partial \tstar} + \vstar \cdot \nabla \vstar + \nabla \pstar = g\astar\hate \Tstar  + \nu \Delta \vstar
+\tilde{\gamma}\sum_{k=1}^{N_1}\sigmatildestar_k \frac{\partial\tildeWstar^k}{\partial \tstar},
\label{eq:Vel:Physical:ab}\\
&\frac{\partial \Tstar}{\partial \tstar} +  \vstar\cdot\nabla \Tstar  = \kappa \Delta \Tstar
+  \gamma\sum_{k}^{N_2}\sigmastar_k
\frac{\partial\Wstar^k}{\partial \tstar}
.
\label{eq:Temp:Physical:ab}
\end{align}
To obtain the non-dimensional version of \eqref{eq:Vel:Physical:ab}-\eqref{eq:Temp:Physical:ab}, we change variables as follows
\begin{align*}
\mathbf{x}^* &= h\mathbf{x}, \\
\tstar &= \beta t, \\
\vstar(\mathbf{x}^*,\tstar) &= \lambda \bfU(\mathbf{x},t), \\
\Tstar(\mathbf{x}^*,\tstar) &= \tilde{T} \T(\mathbf{x},t), \\
\pstar(\mathbf{x}^*,\tstar) &= \frac{\nu\lambda}{h}p(\mathbf{x}, t)\,,
\end{align*}
where $\beta = h^{2}/\kappa$ and $\lambda = \kappa /h$.  We compute
\begin{align*}
&\frac{\lambda}{\beta}\frac{\partial \bfU}{\partial t} + \frac{\lambda^{2}}{h}\bfU \cdot \nabla \bfU + \frac{\nu \lambda}{h^{2}}\nabla p
= g\astar \tilde{T} \hate \T + \frac{\nu \lambda}{h^2}  \Delta \bfU
+\frac{\tilde{\gamma}}{h^{d/2}\sqrt{\beta}}\sum_{k=1}^{N_1}\tilde{\sigma}_k \frac{\partial\tilde{W}^k}{\partial t},\\
&\frac{\tilde{T}}{\beta}\frac{\partial \T}{\partial t} +  \frac{\lambda \tilde{T}}{h}\bfU \cdot\nabla \T  = \frac{\tilde{T}\kappa}{h^2} \Delta T
+  \frac{\gamma}{h^{d/2}\sqrt{\beta}} \sum_{k=1}^{N_2}\sigma_k\frac{\partial W^k}{\partial t},
\end{align*}
where we have used that $\frac{1}{\beta^{1/2} }\Wstar^k(\beta t) =:W^k(t)$ is
a standard Brownian motion, and $\|\sigmastar_k\| = h^{-d/2}
\|\sigma_k\|$.

Notice that $\frac{\tilde{T}}{\beta} = \frac{\lambda \tilde{T}}{h} = \frac{\tilde{T} \kappa}{h^2}$ and $\frac{\lambda}{\beta}=\frac{\lambda^2}{h}$,
and if we recall $\prN = \nu/\kappa$, we obtain
\begin{align}
&\frac{1}{\prN}\left(\frac{\partial \bfU}{\partial t} + \bfU \cdot \nabla \bfU\right) + \nabla p =  \frac{g \alpha \tilde{T}h^3}{\kappa \nu}\hate \T
+  \Delta \bfU + \frac{\tilde{\gamma}}{\nu \sqrt{\kappa}
h^{\frac{d}{2} -2}}\sum_{k=1}^{N_1}\tilde{\sigma}_k \frac{\partial\tilde{W}^k}{\partial t},
\label{eq:B:vel:nodim}\\
&\frac{\partial \T}{\partial t} +  \bfU \cdot\nabla \T  =  \Delta T +
\frac{\gamma}{\tilde{T}\sqrt{\kappa} h^{d/2-1}} \sum_{k=1}^{N_2}\sigma_k\frac{\partial W^k}{\partial t}.
\label{eq:B:temp:nodim}
\end{align}
with the boundary conditions
$$
\bfU|_{x_d=0} = \bfU|_{x_d=h} = 0,~~~
\T|_{x_d=0}=\frac{T_1}{\tilde{T}},~~ \T|_{x_d=h}=0.
$$

The non-dimensionalization proceeds as follows: we select $\beta = h^{2}/\kappa$ as the unit of time, $h$ the unit of space, and further select
$\tilde{T} = \frac{\gamma\sqrt{\beta}}{h^{d/2}}=\frac{\gamma}{\sqrt{\kappa}h^{d/2-1}}$ as the unit of temperature.
We also absorb the constant $\frac{\tilde{\gamma}}{\nu \sqrt{\kappa}
h^{\frac{d}{2} -2}}$ into the definition of the basis $\{\tilde{\sigma_k}\}$, leading to the system \eqref{eq:B:eqn:vel}--\eqref{eq:B:eqn:temp}.
The parameters in the problem are then the unit-less ``Rayleigh numbers''
$\ra := \frac{g \alpha \tilde{T}h^3}{\kappa \nu} = \frac{g\alpha \gamma h^{4-d/2}}{\nu \kappa^{3/2}}$
and
$ \rab: = \frac{T_1}{\tilde{T}} = \frac{\sqrt{\kappa}h^{d/2-1} T_1}{\gamma}$.
Here we have defined the units of $\gamma$ and $\tilde{\gamma}$ as required to maintain consistency and reach a non-dimensional form for
\eqref{eq:B:eqn:vel}--\eqref{eq:B:eqn:temp}.

For example, in the work \cite{FoldesGlattHoltzRichardsThomann2014b} we consider $d=3$, $\tilde{\gamma}=0$, and interpret $\gamma=\frac{H}{\rho c}$ as the volumetric ``stochastic''
heat flux $H$ (units of power$/\sqrt{\text{volume*time}}$) normalized by the density $\rho$
and the specific heat $c$ of the fluid.  Then
\eqref{eq:B:vel:nodim}--\eqref{eq:B:temp:nodim} becomes
\begin{align}
&\frac{1}{\prN}\left(\frac{\partial \bfU}{\partial t} + \bfU \cdot \nabla \bfU\right) + \nabla p =  \ra\hate \T
+  \Delta \bfU,
\label{eq:B:vel:nodim:2}\\
&\frac{\partial \T}{\partial t} +  \bfU \cdot\nabla \T  =  \Delta T + \sum_{k=1}^{N_2}\sigma_k\frac{\partial W^k}{\partial t},
\label{eq:B:temp:nodim:2}
\end{align}
with the boundary conditions
$$
\bfU|_{x_d=0} = \bfU|_{x_d=1} = 0,~~~
\T|_{x_d=0}=\rab,~~ \T|_{x_d=1}=0.
$$

\section*{Acknowledgments}

A large portion of this work was carried out during several research fellowships.
The project was initiated during a Research in Peace fellowship at the
Institut Mittag-Leffler, Stockholm, Sweden, in July 2014.  It continued during a Research in Pairs fellowship at the
Mathematical Research Institute of Oberwolfach, Germany,
 in August 2015.  The details of the manuscript were finalized when the authors visited the Mathematical
Sciences Research Institute in Berkeley, California, in September 2015, and is therefore supported in part by the National
Science Foundation under grant No. 0932078 000.
This work continues a collaboration which began during a research visit  at
the Institute for Mathematics and its Applications in Minneapolis, Minnesota, over 2012-2013 academic year.   We are grateful to all of these wonderful institutions for supporting this work.
We would also like to thank C. Doering, J. Mattingly and E. Thomann for inspiring discussions.
NEGH was partially supported by the National Science Foundation under the grant
NSF-DMS-1313272.

\begin{footnotesize}
\bibliographystyle{alpha}

\end{footnotesize}

\end{document}